\documentclass{article}

\usepackage{amssymb}
\usepackage{amsmath}
\usepackage{amsthm}
\usepackage[T1]{fontenc}
\usepackage{algpseudocode}
\usepackage{algorithm}
\usepackage{mathtools}
\usepackage{xcolor}
\usepackage{booktabs}
\usepackage{longtable}
\usepackage{array}
\usepackage{float}
\usepackage{placeins}

\usepackage{authblk}
\makeatletter
\let\maketitle\AB@maketitle
\makeatother

\usepackage[margin=1in]{geometry}
\usepackage{graphicx}
\usepackage{caption}
\usepackage{subcaption}
\usepackage[hidelinks]{hyperref}

\newtheorem{theorem}{Theorem}[section]
\newtheorem{lemma}[theorem]{Lemma}
\newtheorem{proposition}[theorem]{Proposition}
\newtheorem{corollary}[theorem]{Corollary}

\theoremstyle{definition}
\newtheorem{definition}[theorem]{Definition}

\theoremstyle{remark}
\newtheorem{remark}[theorem]{Remark}
\newtheorem*{remark*}{Remark}

\title{The Dynamics of RMSProp Beyond the Stability Threshold}
\author[1]{Kyle Loh\thanks{\raggedright JDMJ was partially supported by NSF Grant DMS-2307987. ORCID iDs: Kyle Loh, \url{https://orcid.org/0009-0000-6625-3770}; Jason Mireles-James, \url{https://orcid.org/0000-0001-8449-9408}; Zhenghao Xu, \url{https://orcid.org/0000-0001-8076-5166}; Molei Tao, \url{https://orcid.org/0000-0002-3308-6176}.}}
\author[1]{Jason Mireles-James}
\author[2]{Zhenghao Xu}
\author[3]{Molei Tao}

\affil[1]{\small Department of Mathematics and Statistics, Florida Atlantic University}
\affil[2]{\small NVIDIA}
\affil[3]{\small School of Mathematics, Georgia Institute of Technology}
\date{}

\begin{document}

\maketitle

\begin{abstract}
We study what happens to RMSProp when the learning rate is too large for convergence to the minimizer. For positive-definite quadratic loss functions, we show that the minimizer is Lyapunov unstable above an explicit critical learning rate. Nevertheless, every trajectory eventually enters a compact trapping region. We obtain explicit bounds proportional to \(h\) in the spatial variables and to \(h^2\) in the memory variables, where \(h\) is the learning rate. We also prove that nontrivial period-two orbits exist precisely above the critical learning rate, and under additional spectral assumptions, an attracting branch of these orbits emerges from the minimizer. For sufficiently large learning rates, we construct branches near admissible corners of a cube. We then explore the dynamics inside the trapping region using the parameterization method and numerical continuation, exploring invariant circles and tori, coexistence of attracting sets, circle-doubling sequences, and chaotic motions. We examine how the dynamics depend on the loss Hessian, memory decay, and dimension. Finally, we test a learning-rate reduction schedule and continue selected invariant circles into an autonomous variant of the Adam optimizer.
\end{abstract}

{\raggedright\noindent\textbf{Keywords.} RMSProp, global trapping region, periodic orbits, invariant circles, \mbox{Neimark--Sacker bifurcations}.\par}

\section{Introduction} \label{sec:intro}

Consider the problem of minimizing a smooth function 
$L\colon \mathbb{R}^d \to \mathbb{R}$.
The classic gradient descent algorithm is to
choose $x_0 \in \mathbb{R}^d$, $h > 0$, and iterate 
the sequence
\begin{equation} \label{eq:gradD}
x_{n+1} = x_n - h \nabla L(x_n), \quad \quad \quad n \geq 0.
\end{equation}
Since Equation \eqref{eq:gradD} is an Euler iteration
for the ordinary differential equation $x' = -\nabla L(x)$, 
whose solution curves have
\[
\frac{d}{dt} L(x(t)) = - \| \nabla L(x(t)) \|^2,
\]
a Taylor expansion shows that, at any noncritical point, there exists a 
sufficiently small $h$ so that an iteration of the algorithm decreases $L$. 
Moreover, if $\nabla L$ is uniformly Lipschitz on a neighborhood of 
$\left\{ x : L(x) \leq L(x_0)\right\}$, 
then there is an $h > 0$ small enough so that $L$ decreases
along the sequence $\{x_n\}_{n \geq 0}$.  We adopt the convention that 
$L$ is referred to as the loss function and 
$h$ as the learning rate (see Section \ref{rem:intro_AI} below).

The algorithm is simple, involving only first partial derivatives 
and elementary vector operations.  It scales well with dimension as, 
apart from the cost of evaluating the gradient,
memory usage and arithmetic operations are linear in the number of variables.  
Moreover, convergence of the algorithm in the small $h$ limit is particularly 
transparent, as already indicated above.
These facts make gradient descent an invaluable tool in numerical optimization,
where it is not only useful in its own right but 
serves as the foundation for many other, more sophisticated methods.

This being said, gradient descent has well-known shortcomings.  
Applying a single learning rate to all components of the gradient  
leads to poor performance in problems where $L$ exhibits anisotropic 
curvature.  The rate of convergence is only linear in the neighborhood of 
a local minimum, and is especially slow for ill-conditioned Hessians. 
Similarly, flat regions, where the gradient remains small,
 degrade algorithm performance and are difficult to escape.

These issues lead naturally to interest in adaptive step modifications
 like AdaGrad~\cite{duchi2011adaptive}, 
 RMSProp~\cite{tieleman2012lecture},
Adam~\cite{kingma2015adam}, and Muon \cite{jordan2024muon}.
 RMSProp is a commonly used modification of gradient descent
 and is the main focus of the present work.  
 The algorithm is defined formally in Section \ref{sec:algdef}, but
 the idea is to adapt to the geometry of $L$ by 
dividing each component of the gradient by the square root of an exponentially weighted average of 
its square. Introducing an additional ``memory'' state variable to track these averages 
gives a new discrete time dynamical system in twice as many variables. 
We refer to the original $x \in \mathbb{R}^d$ variable as the spatial 
variable, and to the new variable as the memory variable.  
A small numerical stability constant $\epsilon$ is added to all denominators to avoid division by zero. 
Despite its relative simplicity, RMSProp is the basis of more elaborate methods like Adam 
(discussed formally in Section~\ref{sec:Adam}).
See Section \ref{rem:dynamicsRMSProp} for some additional context.

From a dynamical systems perspective, 
the starting point for the discussion of RMSProp is the convergence result of 
 Dimitrieski et al.~\cite[Theorem~1]{dimitrieski2026rmsprop}. The result is restated  
 precisely as Theorem \ref{thm:dimitrieski_global_stability} below but, briefly,
it says that if $L$ is strongly convex, then there exists
 a critical learning rate $0 < h_*$ so that for $0 < h < h_*$  
the RMSProp algorithm converges to the global minimizer for $L$
when initialized from any initial point.  
The critical value $h_* > 0$ is determined explicitly by the Lipschitz constant of the gradient
 of $L$.

The present work studies the dynamics of 
RMSProp for the quadratic loss functional $L(x)=\tfrac12x^\top Ax$, with $A$ a symmetric 
and positive definite matrix (SPD).  The motivation for this simplification is discussed in 
Section \ref{rem:quadraticLoss}. However, we note that the stability theorem 
of \cite{dimitrieski2026rmsprop} gives explicitly a critical learning rate of 
\begin{equation} \label{eq:h_star}
h_* = \frac{2 \epsilon}{\lambda_{\max} (A)}, 
\end{equation}
for the quadratic case. Here $\lambda_{\max} (A)$ is the largest 
eigenvalue of $A$, and $\epsilon$ is the numerical stability constant. (Again, we refer to 
Section \ref{sec:algdef} for the precise definition of the map).
For quadratic loss functions, we establish two global stability/instability results 
which complement the existing small $h$ results of \cite{{dimitrieski2026rmsprop}}. 
 
 First, we show that for all $h > h_*$, quadratic RMSProp is Lyapunov unstable at the origin. 
 There are therefore initial states arbitrarily close to the minimizer whose iterates leave a fixed neighborhood of it.
The proof of this result is delicate because, as seen clearly in Section \ref{sec:algdef}, the 
version of RMSProp we work with is not $C^1$ at the origin.
Because of this, standard linear stability analysis is not available, and arguments
specially adapted to RMSProp are needed.

Nevertheless, we show that for every $h > h_*$, there is a compact trapping region 
which absorbs all iterates of the algorithm, regardless of initial condition.
 Moreover, we obtain explicit bounds on a box in phase space, 
 whose size depends explicitly on $h$, and which contains the trapping region.
 We find that the sides of the bounding box are proportional to $h$ in the 
 spatial variables, and to $h^2$ in the 
 memory variables.  
 
Taken together, these results lead to a practical observation about 
quadratic RMSProp, and constitute one of the main results of the present work.
That is, \textit{while for $h > h_*$, quadratic RMSProp need not minimize 
$L$, spatial variables of its iterates nevertheless enter and remain in an
order $h$ neighborhood of the minimizer for all time}.
 This has practical applications, for example in 
scheduling adjustments to the size of the learning rate $h$ during the minimization 
process.  This is discussed further in Section \ref{sec:adaptive-learning-rate}.

A natural next question is ``what kinds of dynamics actually occur
in the compact trapping region?'' 
We prove that, for generic $A$ and $h > h_*$ close enough to $h_*$, 
there exists a stable branch of period-two orbits 
which bifurcate from the minimizer when $h = h_*$. Again, the lack of a $C^1$ neighborhood of the origin deprives us of a direct application of standard bifurcation theory, and problem-specific arguments are needed.  We also establish the existence of a large learning rate family 
 of up to $2^{d-1}$ symmetric period-two orbits, close to the vertices of a certain cube 
 in the phase space.  The relationship between these large and small learning rate 
 families is studied in examples via numerical continuation.

The results described above hold for a generic symmetric positive definite matrix $A$, and 
any number of variables $d$. We then explore the dynamics in the 
trapping region for a number of explicit matrices $A$ of various sizes ($2 \times 2$ through $5 \times 5$).
Here we find that the system exhibits rich dynamics.
 The results are discussed in detail in Sections \ref{sec:numerical_methods}
and \ref{sec:dynamics_trapping_region}, but a brief summary is as follows:
we often find
Neimark--Sacker bifurcations of the family of $h > h_*$ periodic orbits discussed above, 
leading to attracting invariant circles.  Computation and continuation of these invariant circles
using the parameterization method (see Section \ref{rem:parm}) leads sometimes to further 
bifurcations into invariant tori, and sometimes to period doubling and the appearance of 
strange chaotic attractors. We also perform a limited parameter space search, classifying parameters by the kind of attractor 
they lead to.

Finally, the relevance of this study is not limited to RMSProp. For example, with its time-dependent bias correction removed, Adam reduces to RMSProp when $\beta_1=0$. Motivated by this relation and by the persistence of normally hyperbolic invariant manifolds under small perturbations~\cite{hps1977invariant}, we continue some of our results into Adam, suggesting that it may be amenable to the same analysis. We discuss this continuation further in Section~\ref{sec:Adam}.

The remainder of the paper is organized as follows. Section~\ref{sec:related-work} discusses related work and motivation. Section~\ref{sec:rmsprop-map} defines the RMSProp map, Section~\ref{sec:stability-threshold} contains the stability and trapping results, and Section~\ref{sec:period_two_orbits} treats period-two orbits. Sections~\ref{sec:numerical_methods} and~\ref{sec:bifurcation-analysis} describe the numerical methods and the numerical bifurcation analysis, Section~\ref{sec:survey} surveys the other parameters, and Section~\ref{sec:Adam} treats autonomous Adam.

\section{Related work and motivation}\label{sec:related-work}

\subsection{Machine learning context}\label{rem:intro_AI}

Gradient descent and its variants are
 enjoying a period of substantial renewed interest, due
at least in part to applications in machine learning (ML) like 
supervised and unsupervised training of neural networks and  
large language models.  A thorough review of this literature is a task far beyond
the scope of the present work, and we refer the interested reader to the review 
article of LeCun, Bengio, and Hinton
\cite{lecun2015deep}, and to the notes of Ruder~\cite{ruder2016overview} 
for much more exhaustive discussion. We note that referring to
$L$ as the ``loss function,'' and $h$ as the ``learning rate'' is a
convention that comes from the
machine learning literature.

\subsection{Dynamics of discretized gradient descent}\label{ref:intro_gradDynamics}

Treating the gradient descent algorithm as a dynamical system 
with the learning rate as a bifurcation parameter is a classic topic in numerical 
analysis and dynamics of algorithms.   
That increasing the learning rate to accelerate convergence leads to oscillations
is well known (see for example the 
book of Hairer and Wanner~\cite{hairer1996solving}).  It is also common to study gradient descent for quadratic loss 
functions, in which case the algorithm involves iterating a linear system, 
and all dynamics are understood by studying the eigenvalues of the resulting 
matrix.  In this case it's easy to see that convergence to the minimum, oscillations, 
and unbounded orbits are all possible. 
Beyond quadratic minimization, the gradient descent algorithm admits nonlinearities and the situation is much more complicated. Kong and Tao have proved that, for multiscale objectives, the algorithm exhibits chaotic dynamics when the learning rate is not small enough, providing a mechanism for escaping local minima as an alternative to noisy escape due to stochastic gradients~\cite{kong2020stochasticity}. We refer also to the work of Liang and Mont\'{u}far~\cite{liang2025largesteps}, who prove the existence of chaotic behavior and fractal convergence boundaries when gradient descent is applied to certain matrix factorization problems.

\subsection{Dynamics of RMSProp and the adaptive edge of stability}\label{rem:dynamicsRMSProp}

Violent but bounded oscillations occur in adaptive methods as well as in gradient descent. Their behavior, however, depends on more than the learning rate and the local curvature. In RMSProp, we also divide by a quantity determined by the memory variables. When gradients become large, the memory can grow and reduce subsequent updates. Thus, fixing $h$ and $\epsilon$ does not fix the local curvature or the coordinate rescaling along a training trajectory. Cohen et al.~\cite{cohen2025centralflows, cohen2022adaptiveeos} study this behavior in neural-network training. They account for the optimizer's coordinate rescaling when measuring the largest eigenvalue of the Hessian, obtaining what they call the preconditioned sharpness. In their experiments, this quantity often stays near the stability boundary of the update with the rescaling held fixed. They call this the adaptive edge of stability. Moreover, oscillations at the edge of stability can be chaotic~\cite{cohen2025centralflows}.

A natural next question is ``what kinds of dynamics actually occur when the iterates remain bounded?'' For our quadratic loss, the Hessian is constant, so we focus on the effects of adaptation without also having to account for changes in the loss curvature. We note that the adaptive edge of stability concerns the changing preconditioned curvature along a training trajectory, while $h_*$ in~\eqref{eq:h_star} corresponds to a fixed stability boundary of the quadratic minimizer. Bock and Wei\ss~\cite{bock2021local} give local stability conditions for several versions of Adam and RMSProp, while Dimitrieski et al.~\cite{dimitrieski2026rmsprop} prove global convergence for epsilon-outside RMSProp below $2\epsilon/L$ and boundedness under bounded step-size rules for strongly convex, $L$-smooth losses. Here we study what happens above the quadratic threshold, where bounded iterates need not converge to the minimizer.

\subsection{Quadratic loss functionals}\label{rem:quadraticLoss}

Restricting to a quadratic loss corresponds to isolating the quadratic term in the Taylor expansion of $L$
near a nondegenerate local minimum. 
Moreover, when $L$ is quadratic the origin is the unique minimizer of $L$.
So the variational structure of the loss function is completely understood,  
and we focus entirely on properties of the algorithm, like
its ability to detect the minimum.

Beyond this, Meterez et al.~\cite{meterez2026quadratic} have argued in defense of the quadratic model, 
showing that local quadratic models predict well the training results of a language model
with millions of parameters.  
The case of quadratic loss also illuminates the boundary between gradient descent and RMSProp.  This is 
because gradient descent for a quadratic functional results in a linear dynamical system, 
while RMSProp remains nonlinear due to the fact that 
the step sizes depend on past gradients.  Complicated behavior of RMSProp in a quadratic example
is then a feature of the optimizer, and not the chosen problem. 
Beyond this, the quadratic case admits detailed analytical study, serving as a base for further analysis 
of nonconvex, noisy, or multiscale losses.

\subsection{The parameterization method}\label{rem:parm}

The parameterization method is a functional analytic framework for studying 
invariant manifolds, first proposed by Cabr\'{e}, Fontich, and de la Llave for studying 
spectral submanifolds associated with fixed points of maps between Banach spaces
\cite{cabre2003parameterization1,cabre2003parameterization2,cabre2005parameterization3}, 
and later extended to quasi-periodic orbits of maps, and their attached invariant 
stable/unstable manifolds by Haro and de la Llave 
\cite{haro2006parameterization,haro2007explorations,fontich2009whiskered}.
By now the parameterization method supports a thriving sub-industry, and a thorough review 
of the literature is a task beyond the scope of the present work.  We refer the interested 
reader to the recent book of Haro et al.~\cite{haro2016parameterization}. We describe our computation and continuation of invariant circles and tori in Section~\ref{sec:parametrization_methods_stability}.

\section{RMSProp as a dynamical system}\label{sec:rmsprop-map}

Throughout, $d$ denotes the spatial dimension. We write $\oslash$ and $\odot$ to denote element-wise division and multiplication respectively, and $x^{\odot r}$ and $\sqrt[\odot]{x}$ for the element-wise power and square root operations on the vector $x$. Let $I_d$ denote the $d \times d$ identity matrix and $\mathbf{1} \in \mathbb{R}^d$ the $d$-dimensional column vector of ones. If $A$ is a $d \times d$ matrix with entries $a_{ij}$, then we write $|A|$ to denote the matrix with entries $|a_{ij}|$. For vectors, $|x|=(|x_1|,\ldots,|x_d|)^\top$ denotes the componentwise absolute value.

\subsection{Algorithm definition}\label{sec:algdef}
Given a learning rate $h > 0$ and a memory decay rate $\beta \in [0,1)$, we are interested in
the epsilon-outside RMSProp update of Algorithm~\ref{alg:rmsprop}, as used, for example,
in the PyTorch implementation~\cite{pytorchRMSpropAPI2026}.

\begin{algorithm}[!htbp]
\caption{RMSProp Update Rule (Epsilon-Outside Variant):  Tieleman and Hinton~\cite{tieleman2012lecture}}
\label{alg:rmsprop}
\begin{algorithmic}[1]
\Require Initial spatial vector $x_0 \in \mathbb{R}^d$, Initial memory $v_0\in\mathbb{R}_{\ge0}^d$ (usually $v_0=0$), Numerical stability constant $\epsilon \in \mathbb{R}_{>0}$
\Require Objective function $L: \mathbb{R}^d \to \mathbb{R}$, learning rate $h > 0$, decay rate $\beta \in [0,1)$
\Require Number of iterations $N$
\Statex
\For{$k = 0, 1, \dots, N-1$}
\State $g_k \gets \nabla L(x_k)$ \Comment{Compute gradient}
\State $v_{k+1} \gets \beta v_{k} + (1 - \beta) (g_k)^{\odot 2}$ \Comment{Update gradient memory}
\State $x_{k+1} \gets x_k - h (g_k) \oslash \left( \sqrt[\odot]{v_{k+1}} + \epsilon\mathbf 1 \right)$ \Comment{Update spatial variable}
\EndFor
\Statex
\State \Return $x_N, v_N$
\end{algorithmic}
\end{algorithm}

We note that the definition is not universal, as
different optimizer implementations place the numerical stability constant $\epsilon$ differently. 
PyTorch RMSprop adds $\epsilon$ outside the square root, as in Algorithm~\ref{alg:rmsprop} \cite{pytorchRMSpropAPI2026}.
The current Keras/TensorFlow RMSprop implementation instead adds $\epsilon$ inside the square root, 
giving $\sqrt[\odot]{v_{k+1}+\epsilon\mathbf 1}$ \cite{kerasRMSpropSource2026}. Optax, an optimization 
library for JAX, supports both conventions and defaults to epsilon inside the square root \cite{optaxRMSpropAPI2026}.

Both epsilon conventions preserve the phase space $\mathbb R^d\times\mathbb R_{\geq0}^d$, but differ in their denominators and stability thresholds.
Note that the origin is the unique fixed point of the RMSProp update map $F$, defined in~\eqref{eq:RMSProp} below, in the phase space $\mathbb R^d\times\mathbb R_{\geq0}^d$.
In the epsilon-outside case, the map is differentiable at the origin relative to $v\geq0$, as shown in Lemma~\ref{lem:outside_relative_derivative}, but it is not $C^1$ in a neighborhood of the origin. 

The epsilon-inside map additionally extends smoothly to an open neighborhood of the origin that includes negative memory coordinates. We focus on epsilon outside because the loss of a $C^1$ neighborhood prevents a direct application of the usual linearization theorem and is thus the more challenging case. The epsilon-inside case is smoother and its origin stability/instability follows from linearization.

As already mentioned in the introduction, we focus on the performance and properties
of RMSProp with $L$ a quadratic loss function
\begin{equation} \label{eq:quadratic}
L(x)=\frac{1}{2} x^\top Ax,
\end{equation}
where $A=A^\top\succ0$. We assume $h,\epsilon>0$, $0<\beta<1$, and $v_0\geq0$.

\subsection{The map}
Combining the memory and spatial updates defines a map on $\mathbb R^d\times\mathbb R^d_{\geq0}$. 
For the quadratic objective of Equation \eqref{eq:quadratic}, set
\begin{equation} \label{eq:memory}
F_V(x,v)=\beta v+(1-\beta)(Ax)^{\odot2}.
\end{equation}
The epsilon-outside RMSProp map $F:\mathbb R^d\times\mathbb R_{\geq0}^d\to\mathbb R^d\times\mathbb R_{\geq0}^d$ is
\begin{equation} \label{eq:RMSProp}
F(x,v) = \begin{bmatrix}
x-hAx\oslash\left(\sqrt[\odot]{\beta v+(1-\beta)(Ax)^{\odot2}}+\epsilon\mathbf 1\right) \\
\beta v+(1-\beta)(Ax)^{\odot2}
\end{bmatrix}
= \begin{bmatrix}
x-hAx\oslash\left(\sqrt[\odot]{F_V(x,v)}+\epsilon\mathbf 1\right) \\
F_V(x,v)
\end{bmatrix} 
\end{equation}
For fixed parameters, write $F(x,v)=(F_1(x,v),F_2(x,v))$, where $F_1$ is the spatial component and $F_2=F_V$ is the memory component. We omit the fixed parameters from the notation. 
Note for reference that the epsilon-inside map $F_I:\mathbb R^d\times\mathbb R_{\geq0}^d\to\mathbb R^d\times\mathbb R_{\geq0}^d$ 
is written
\begin{equation*}
F_I(x,v) = \begin{bmatrix}
x-hAx\oslash\sqrt[\odot]{F_V(x,v)+\epsilon\mathbf 1} \\
\beta v+(1-\beta)(Ax)^{\odot2}
\end{bmatrix}
\end{equation*}

\section{Origin stability and global trapping}\label{sec:stability-threshold}

Dimitrieski et al.~\cite{dimitrieski2026rmsprop} establish global asymptotic stability for epsilon-outside RMSProp, as defined in~\eqref{eq:RMSProp}, below $2\epsilon/L$ for strongly convex, $L$-smooth losses, together with boundedness under bounded step-size rules.

\begin{definition}[Lyapunov stability and instability \cite{elaydi2005difference}]\label{def:lyapunov_stability}
Let $F:\mathbb R^d\times\mathbb R_{\geq0}^d\to\mathbb R^d\times\mathbb R_{\geq0}^d$ be a discrete dynamical system and let $z_*$ be a fixed point. The point $z_*$ is \emph{stable in the sense of Lyapunov} if, for every $\rho>0$, there exists $\delta>0$ such that
\begin{equation*}\|z_0-z_*\|<\delta\quad\Longrightarrow\quad\|F^k(z_0)-z_*\|<\rho\quad\text{for every }k\geq0\end{equation*}
for all $z_0\in\mathbb R^d\times\mathbb R_{\geq0}^d$. The fixed point is \emph{Lyapunov unstable} if it is not Lyapunov stable; equivalently, there exists $\rho_0>0$ such that, for every $\delta>0$, there exist $z_0\in\mathbb R^d\times\mathbb R_{\geq0}^d$ and $k\geq0$ satisfying
\begin{equation*}\|z_0-z_*\|<\delta,\qquad\|F^k(z_0)-z_*\|\geq\rho_0.\end{equation*}
\end{definition}

\subsection{Global asymptotic stability below the threshold}

\begin{definition}[$\mu$-convexity]\label{def:mu_convexity}
A differentiable function $f:\mathbb R^d\to\mathbb R$ is \emph{$\mu$-strongly convex}, or \emph{$\mu$-convex}, if $\mu>0$ and
\begin{equation*}f(y)\geq f(x)+\nabla f(x)^\top(y-x)+\frac{\mu}{2}\|y-x\|_2^2\qquad\text{for all }x,y\in\mathbb R^d.\end{equation*}
\end{definition}

\begin{definition}[$L$-smoothness]\label{def:L_smoothness}
A differentiable function $f:\mathbb R^d\to\mathbb R$ is \emph{$L$-smooth} if $L>0$ and its gradient is globally $L$-Lipschitz:
\begin{equation*}\|\nabla f(y)-\nabla f(x)\|_2\leq L\|y-x\|_2\qquad\text{for all }x,y\in\mathbb R^d.\end{equation*}
\end{definition}

We recall, for reference, the constant-step stability result, whose statement and proof are found in \cite[Thm.~1]{dimitrieski2026rmsprop}.

\begin{theorem}[Global stability of epsilon-outside RMSProp]\label{thm:dimitrieski_global_stability}
Let $f:\mathbb R^d\to\mathbb R$ be continuously differentiable, $\mu$-strongly convex for some $\mu>0$, and globally $L$-smooth for some $L>0$, and let $x_*$ be its unique minimizer. Consider
\begin{equation*}v_{k+1}=\beta v_k+(1-\beta)(\nabla f(x_k))^{\odot2},\qquad x_{k+1}=x_k-h\nabla f(x_k)\oslash\left(\sqrt[\odot]{v_{k+1}}+\epsilon\mathbf 1\right),\end{equation*}
with $x_0\in\mathbb R^d$, $v_0\in\mathbb R_{\geq0}^d$, $0<\beta<1$, $\epsilon>0$, and constant step size
\begin{equation*}0<h<\frac{2\epsilon}{L}.\end{equation*}
Then $(x_*,0)$ is globally asymptotically stable in $\mathbb R^d\times\mathbb R_{\geq0}^d$.
\end{theorem}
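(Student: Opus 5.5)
The plan is to treat the map as gradient descent on $f$ with a diagonal, state-dependent preconditioner. Write $D_k=\operatorname{diag}\bigl(\sqrt[\odot]{v_{k+1}}+\epsilon\mathbf 1\bigr)^{-1}$, so that $x_{k+1}=x_k-hD_k\nabla f(x_k)$. Every diagonal entry of $D_k$ lies in $(0,1/\epsilon]$, so the effective step in each coordinate is at most $h/\epsilon<2/L$. We then use $f$ as a Lyapunov function for the spatial variable. By the descent lemma for $L$-smooth functions,
\begin{equation*}
f(x_{k+1})\le f(x_k)-h\,g_k^\top D_k g_k+\frac{Lh^2}{2}\|D_kg_k\|_2^2 ,
\end{equation*}
where $g_k=\nabla f(x_k)$. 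Componentwise, with $d_i\in(0,1/\epsilon]$, the last two terms combine to
\begin{equation*}
-\sum_i h\,d_i\Bigl(1-\tfrac{Lh}{2}d_i\Bigr)g_{k,i}^2\le-\sum_i h\,d_i\Bigl(1-\tfrac{Lh}{2\epsilon}\Bigr)g_{k,i}^2 .
\end{equation*}
The factor $c:=1-Lh/(2\epsilon)$ is positive by hypothesis. Hence $f(x_k)$ is nonincreasing, and the sublevel set $\{f\le f(x_0)\}$ is compact by strong convexity. On it $\|g_k\|$ stays bounded, and so does the memory, since $v_{k+1}\le\max\bigl(v_0,\sup\|g\|_\infty^2\bigr)$ by the convex-combination update. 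It follows that $d_i\ge\underline d>0$ uniformly along the orbit.

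For convergence, we sum the decrease to get $\sum_k hc\,\underline d\,\|g_k\|^2\le f(x_0)-f(x_*)<\infty$. Thus $g_k\to0$, and strong convexity gives $x_k\to x_*$, since $\|x-x_*\|\le\|\nabla f(x)\|/\mu$. Then $v_k\to0$, because $v_{k+1}=\beta v_k+(1-\beta)g_k^{\odot2}$ is a stable linear recursion with input tending to zero. This gives global attractivity in $\mathbb R^d\times\mathbb R^d_{\ge0}$.

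For Lyapunov stability of $(x_*,0)$, take an initial state with $\|x_0-x_*\|<\delta$ and $\|v_0\|<\delta$. The monotonicity $f(x_k)\le f(x_0)\le f(x_*)+\tfrac L2\delta^2$ together with $\mu$-convexity gives $\|x_k-x_*\|\le\sqrt{L/\mu}\,\delta$ for all $k$. Then $\|g_k\|\le L\sqrt{L/\mu}\,\delta$, and $v_k$ is bounded by the maximum of $\|v_0\|_\infty$ and $\sup_k\|g_k\|_\infty^2$, both of which are small. So the whole orbit stays $O(\delta)$-close to $(x_*,0)$.

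The main obstacle is making sure the descent inequality actually holds with a positive constant. The preconditioner $D_k$ depends on $g_k$ itself through $v_{k+1}$, so standard fixed-metric arguments do not apply directly. The componentwise bound above resolves this, because it only uses $d_i\le1/\epsilon$, which holds regardless of $v$. The other delicate point is the uniform lower bound $\underline d$, which requires first establishing that the memory stays bounded along the orbit. Once these two facts are in place, the remaining steps are routine.
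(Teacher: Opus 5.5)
Your proof is correct, but it takes a different route from the one the paper relies on. The paper does not prove this theorem itself. It cites Dimitrieski et al., whose argument, as noted in Remark~\ref{rem:energy-scaling}, uses a single Lyapunov function on the joint state $(x,v)$, built from a $3/2$ power of the loss and a logarithmic memory term. The paper's own trapping argument is modeled on the same joint-energy idea, using $\tfrac12X^\top AX+\alpha\|W\|_1$.

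You instead use $f$ alone and handle the memory separately. Because every preconditioner entry satisfies $d_i\le1/\epsilon$, the bound $1-\tfrac{Lh}{2}d_i\ge1-\tfrac{Lh}{2\epsilon}>0$ holds for every $v$, so $f$ decreases monotonically without any memory term. The convex-combination bound on $v$ then gives the uniform lower bound $\underline d$ needed for summability. You also assemble Lyapunov stability and attractivity directly, rather than invoking a strict Lyapunov function on the whole state.

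This is more elementary, and it makes the threshold transparent. The value $2\epsilon/L$ is exactly the worst case $v_i=0$ of the per-coordinate step $h/(\sqrt{v_i}+\epsilon)$. That worst case is approached near the minimizer, which is consistent with the instability above $h_*$.

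What your route does not provide is control once the step can exceed that worst case. Examples are the bounded time-varying step-size rules in the cited work, or $h>h_*$ as in Section~\ref{sec:stability-threshold}. There $f$ alone need not decrease. One must put the memory into the energy to exploit the fact that large gradients inflate $v$ and shrink the effective step, and that is what the joint Lyapunov functions buy.
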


For the quadratic objective $L(x)=\frac12x^\top Ax$, the unique minimizer is $x_*=0$, while the strong-convexity and smoothness constants are $\mu=\lambda_{\min}(A)$ and $L=L_A$, where $L_A=\lambda_{\max}(A)=\|A\|_2$. The cited theorem therefore gives the following specialization.

\begin{corollary}[Global asymptotic stability]\label{thm:outside_stability_below_threshold}
If $0<h<h_*$, then under $L(x)=\frac12x^\top Ax$, the origin is globally asymptotically stable in $\mathbb R^d\times\mathbb R_{\geq0}^d$ under $F$.
\end{corollary}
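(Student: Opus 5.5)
This corollary is a direct specialization of Theorem~\ref{thm:dimitrieski_global_stability}, so the plan is simply to verify its hypotheses for $f(x)=\tfrac12x^\top Ax$ and to match the constants. First, $f$ is $C^\infty$, so it is in particular continuously differentiable, and $\nabla f(x)=Ax$ because $A$ is symmetric. With this gradient, the recursion in Theorem~\ref{thm:dimitrieski_global_stability} is exactly the map $F$ of \eqref{eq:RMSProp}: the memory update is $F_V$ from \eqref{eq:memory}, and the spatial update divides by $\sqrt[\odot]{F_V(x,v)}+\epsilon\mathbf 1$. The phase space $\mathbb R^d\times\mathbb R^d_{\geq0}$ and the parameter ranges $0<\beta<1$, $\epsilon>0$ are also the same.

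Next I will identify the constants. For strong convexity, the identity
\[
f(y)-f(x)-(Ax)^\top(y-x)=\tfrac12(y-x)^\top A(y-x)
\]
holds exactly. Since $A\succ0$, the right-hand side is at least $\tfrac{\lambda_{\min}(A)}{2}\|y-x\|_2^2$, so $f$ is $\mu$-convex with $\mu=\lambda_{\min}(A)>0$. For smoothness, $\|Ay-Ax\|_2\leq\|A\|_2\|y-x\|_2$, and $\|A\|_2=\lambda_{\max}(A)=L_A$ because $A$ is SPD. So $f$ is $L_A$-smooth.

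Two small points remain. First, the unique minimizer is $x_*=0$: it is the unique solution of $Ax=0$, and strict convexity gives uniqueness. Hence the distinguished equilibrium $(x_*,0)$ in the theorem is the origin of $\mathbb R^d\times\mathbb R^d_{\geq0}$. Second, the step-size condition $h<2\epsilon/L$ becomes $h<2\epsilon/\lambda_{\max}(A)$, which is exactly $h<h_*$ by \eqref{eq:h_star}. Invoking Theorem~\ref{thm:dimitrieski_global_stability} then gives global asymptotic stability of the origin under $F$.

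There is no genuine obstacle here. The only step that needs care is the notational clash between the loss $L(x)$ and the smoothness constant $L$. I will resolve it by writing the smoothness constant as $L_A$ throughout, and by checking that $L_A$ is the sharp Lipschitz constant of $\nabla f$, so that the threshold $h_*$ is exactly the one in \eqref{eq:h_star} and not merely an upper bound for it.
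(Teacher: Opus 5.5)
Your proposal is correct and is the paper's argument: the corollary follows by specializing Theorem~\ref{thm:dimitrieski_global_stability} with $x_*=0$, $\mu=\lambda_{\min}(A)$, and $L=L_A=\lambda_{\max}(A)=\|A\|_2$, so that the step-size condition $h<2\epsilon/L$ becomes $h<h_*$. Your checks of strong convexity, smoothness, and that the recursion coincides with $F$ simply spell out details the paper leaves implicit.
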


In the instability proof below, we use the component maps $F_1(x,v)$ and $F_2(x,v)$ in the original coordinates.

\subsection{Instability above the threshold}

To prove Lyapunov instability for $h>h_*$, we write the map as a leading linear term plus a higher-order remainder. We construct a cone about the most strongly expanding eigenspace and show that an orbit remains in this cone while it stays near the origin. The expanding component then forces orbits starting arbitrarily close to the origin to leave a fixed neighborhood.

We begin by observing that the RMSProp mapping is Fr\'{e}chet differentiable at the origin, relative to the phase space $v\geq0$, as defined below.

\begin{definition}[Relative derivative]\label{def:relative_derivative}
Let $z_*\in \mathbb R^d\times\mathbb R_{\geq0}^d$. A map $F:\mathbb R^d\times\mathbb R_{\geq0}^d\to\mathbb R^{2d}$ is \emph{differentiable at $z_*$ relative to the cone $\mathbb R^d\times\mathbb R_{\geq0}^d$} if there is a linear map $J:\mathbb R^{2d}\to\mathbb R^{2d}$ such that
\begin{equation*}
\lim_{\substack{z\to z_*,\ z\ne z_*\\z\in \mathbb R^d\times\mathbb R_{\geq0}^d}}
\frac{\|F(z)-F(z_*)-J(z-z_*)\|_2}{\|z-z_*\|_2}=0.
\end{equation*}
We call $J$ the \emph{relative derivative} and write $D_{\mathbb R^d\times\mathbb R_{\geq0}^d}F(z_*)=J$.
\end{definition}

\begin{lemma}[Relative derivative at the origin]\label{lem:outside_relative_derivative}
For every $h>0$, the epsilon-outside map $F$ is differentiable at the origin relative to $\mathbb R^d\times\mathbb R_{\geq0}^d$, with
\begin{equation}\label{eq:origin-relative-derivative}
D_{\mathbb R^d\times\mathbb R_{\geq0}^d}F(0)=J_0=\begin{bmatrix}I_d-\frac{h}{\epsilon}A&0\\0&\beta I_d\end{bmatrix}.
\end{equation}
More precisely, $F(z)=J_0z+R(z)$, where $\|R_x(z)\|_2=O(\|z\|_2^{3/2})$ and $\|R_v(z)\|_2=O(\|z\|_2^2)$ as $z\to0$ in $\mathbb R^d\times\mathbb R_{\geq0}^d$.
\end{lemma}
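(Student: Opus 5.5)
We prove the expansion $F(z)=J_0z+R(z)$ with the stated remainder orders by direct componentwise computation, writing $z=(x,v)$ with $v\geq0$. Differentiability relative to the cone then follows immediately, since $O(\|z\|_2^{3/2})$ and $O(\|z\|_2^2)$ are both $o(\|z\|_2)$.

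The memory component is exact up to a quadratic term: $F_2(x,v)=\beta v+(1-\beta)(Ax)^{\odot2}$. So $R_v(z)=(1-\beta)(Ax)^{\odot 2}$, and
\[
\|R_v(z)\|_2\le(1-\beta)\|Ax\|_2^2\le(1-\beta)\|A\|_2^2\|z\|_2^2 .
\]
This uses $\|w^{\odot2}\|_2\le\|w\|_2^2$.

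For the spatial component, write $s=\sqrt[\odot]{F_V(x,v)}\geq0$. Coordinatewise we have
\[
\frac{(Ax)_i}{s_i+\epsilon}=\frac{(Ax)_i}{\epsilon}-\frac{(Ax)_i\,s_i}{\epsilon(s_i+\epsilon)} .
\]
It follows that $F_1(z)=(I_d-\tfrac h\epsilon A)x+R_x(z)$ with
\[
(R_x)_i=\frac{h}{\epsilon}\,\frac{(Ax)_i s_i}{s_i+\epsilon},\qquad
|(R_x)_i|\le\frac{h}{\epsilon^2}|(Ax)_i|\,s_i .
\]
The key point is that $s_i\geq0$, so $s_i+\epsilon\ge\epsilon$, and this bound is uniform. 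We then estimate $s_i$ using $v\ge0$. Since $0\le F_V(x,v)_i\le\beta v_i+(1-\beta)(Ax)_i^2$, we get
\[
s_i\le\sqrt{\beta v_i}+\sqrt{1-\beta}\,|(Ax)_i|\le C\bigl(\|z\|_2^{1/2}+\|z\|_2\bigr),
\]
which is $O(\|z\|_2^{1/2})$ for $\|z\|_2\le1$. Combining with $|(Ax)_i|\le\|A\|_2\|z\|_2$ gives $|(R_x)_i|=O(\|z\|_2^{3/2})$, and summing over the $d$ coordinates yields the claimed bound on $\|R_x\|_2$.

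The only delicate point is this square-root term. It is exactly where the cone restriction $v\ge0$ is used: it makes $\sqrt{v_i}$ real, and for $v_i<0$ there is no smooth extension. It is also why the remainder is only of order $3/2$ rather than quadratic. This is consistent with $F$ failing to be $C^1$ near the origin, since $\partial_v F_1$ involves $v_i^{-1/2}$.
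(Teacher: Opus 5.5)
Your proposal is correct and follows the paper's proof essentially step for step. It uses the same decomposition $F(z)=J_0z+R(z)$ with $R_x=hAx\odot s\oslash\bigl(\epsilon(s+\epsilon\mathbf 1)\bigr)$ and $R_v=(1-\beta)(Ax)^{\odot2}$, the same uniform bound $s_i+\epsilon\ge\epsilon$, and an $O(\|z\|_2^{1/2})$ bound on $s_i$ for $\|z\|_2\le1$; the only difference is cosmetic, in that you split the square root via $\sqrt{a+b}\le\sqrt a+\sqrt b$ where the paper bounds $s_i\le\sqrt{\beta\|v\|_2+(1-\beta)L_A^2\|x\|_2^2}$ and then uses $\varrho^2\le\varrho$.
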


\begin{proof}
Write $z=(x,v)\in\mathbb R^d\times\mathbb R_{\geq0}^d$. Use the memory component $F_2=F_V$ from~\eqref{eq:memory}.
Adding and subtracting $(h/\epsilon)Ax$ in the spatial update gives the decomposition $F(z)=J_0z+R(z)$, with $J_0$ from~\eqref{eq:origin-relative-derivative} and
\begin{equation*}
R(z)=\begin{bmatrix}
hAx\odot\left(\frac1\epsilon\mathbf1-\frac1{\sqrt[\odot]{F_2(x,v)}+\epsilon\mathbf1}\right)\\[2pt]
(1-\beta)(Ax)^{\odot2}
\end{bmatrix}.
\end{equation*}
Here and below the fractions in the first component are componentwise.  Thus the two blocks of the remainder are $R_x(z)=hAx\odot\sqrt[\odot]{F_2(x,v)}\oslash\bigl(\epsilon(\sqrt[\odot]{F_2(x,v)}+\epsilon\mathbf1)\bigr)$ and $R_v(z)=(1-\beta)(Ax)^{\odot2}$.

To make the remainder estimate explicit, use the Euclidean norm and $L_A=\|A\|_2$.  Since $\sqrt{[F_2(x,v)]_i}+\epsilon\geq\epsilon$, $|(Ax)_i|\leq L_A\|x\|_2$, $v_i\leq\|v\|_2$, and $(Ax)_i^2\leq L_A^2\|x\|_2^2$, every spatial component satisfies $|(R_x(z))_i|\leq(h/\epsilon^2)|(Ax)_i|\sqrt{[F_2(x,v)]_i}$. Consequently,
\begin{equation}
|(R_x(z))_i|\leq\frac{hL_A}{\epsilon^2}\|x\|_2\sqrt{\beta\|v\|_2+(1-\beta)L_A^2\|x\|_2^2},\qquad
\|R_v(z)\|_2\leq(1-\beta)\|Ax\|_2^2\leq(1-\beta)L_A^2\|x\|_2^2.
\label{eq:outside_remainder_bounds}
\end{equation}
If $\varrho=\|z\|_2\leq1$, then $\|x\|_2,\|v\|_2\leq \varrho$ and $\varrho^2\leq \varrho$, so \eqref{eq:outside_remainder_bounds} gives
\[
\|R_x(z)\|_2\leq
\frac{h\sqrt dL_A}{\epsilon^2}\sqrt{\beta+(1-\beta)L_A^2}\,\varrho^{3/2},
\qquad
\|R_v(z)\|_2\leq(1-\beta)L_A^2\varrho^2.
\]
Consequently,
\[
\frac{\|R(z)\|_2}{\|z\|_2}
\leq\frac{h\sqrt dL_A}{\epsilon^2}\sqrt{\beta+(1-\beta)L_A^2}\,\varrho^{1/2}
+(1-\beta)L_A^2\varrho\longrightarrow0,
\]
as $z\to0$ in $\mathbb R^d\times\mathbb R_{\geq0}^d$, and hence $R(z)=o(\|z\|_2)$.

\end{proof}

Note that the classic Hartman--Grobman/stable manifold theory cannot be applied off the shelf, as the map is not differentiable in a neighborhood of the origin.  This said, we find that a Lipschitz extension of the RMSProp map permits application of the Lipschitz regularity unstable manifold construction of \cite{MR271991} (see also \cite{hps1977invariant}), whose proof is based on the Lipschitz Hadamard--Perron theorem to obtain the instability of the origin, and even the existence of a low regularity unstable manifold.

We do not present this argument here, giving instead a direct instability proof based on the construction of a simple invariant cone. The argument is fairly short, makes the present paper more self-contained, and provides some insight into the geometry of the instability which will be useful later.  The construction and preparation of the Lipschitz extension, on the other hand, are somewhat technical, and more opaque, and not genuinely shorter than the explicit construction given below.

\begin{theorem}[Local Lyapunov instability at the origin]
    \label{thm:outside_instability_above_threshold}
    If
    \[
        h>h_*:=\frac{2\epsilon}{L_A},
    \]
    then the origin is Lyapunov unstable under \(F\).
\end{theorem}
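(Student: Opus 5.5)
We use the decomposition of Lemma~\ref{lem:outside_relative_derivative}, $F(z)=J_0z+R(z)$, together with a cone argument around the top eigenvector of $A$. Let $u_1$ be a unit eigenvector of $A$ with eigenvalue $L_A$, and let $\mu:=|1-hL_A/\epsilon|$. Since $h>h_*$, we have $1-hL_A/\epsilon<-1$, so $\mu>1$. Moreover $\mu$ is the largest modulus among the eigenvalues of $J_0$: the spatial block has eigenvalues $1-h\lambda_j/\epsilon\in[1-hL_A/\epsilon,1)$, and the memory block has eigenvalue $\beta<1$.

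Decompose $z=(x,v)$ as $x=au_1+x_\perp$ with $x_\perp\perp u_1$. Define the cone
\[
K_\gamma=\{(x,v):\ \|x_\perp\|_2+\|v\|_2\leq\gamma|a|\}.
\]
The choice of norm on the complementary part needs care. On $u_1^\perp$ the spatial block acts with norm at most $\max_{j\geq2}|1-h\lambda_j/\epsilon|\le\mu$, with equality if $L_A$ is a repeated eigenvalue. To handle this, we let $u_1$ range over the whole top eigenspace $E$ and put $x_\perp$ in $E^\perp$. Write $\nu:=\max\{\beta,\ \max_{\lambda_j<L_A}|1-h\lambda_j/\epsilon|\}$. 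Then $\nu<\mu$, because $1-h\lambda_j/\epsilon>1-hL_A/\epsilon$ and $1-h\lambda_j/\epsilon<1$ whenever $\lambda_j<L_A$. Accordingly, $a$ becomes a vector $P_Ex$, and $|a|$ denotes its norm.

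\textbf{Key steps.} (1) \emph{Remainder bound.} For $\|z\|_2\le r$, the bounds \eqref{eq:outside_remainder_bounds} give $\|R(z)\|_2\le C r^{1/2}\|z\|_2$. Inside $K_\gamma$ we have $\|z\|_2\le(1+\gamma)|a|$. Hence $\|R(z)\|_2\le \eta(r)|a|$ with $\eta(r)=C(1+\gamma)r^{1/2}\to0$ as $r\to 0$.

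(2) \emph{Cone invariance and expansion.} For $z\in K_\gamma$ with $\|z\|_2\le r$, the image satisfies $|a'|\geq\mu|a|-\eta|a|$ and $\|x'_\perp\|_2+\|v'\|_2\leq\nu\gamma|a|+2\eta|a|$. (The factor $2$ may need a $\sqrt2$ adjustment depending on how $R$ is split between the blocks.) Fix any $\gamma>0$, say $\gamma=1$, and choose $r$ small enough that
\[
\nu\gamma+2\eta\leq\gamma(\mu-\eta)\quad\text{and}\quad\mu-\eta\geq\kappa:=\tfrac{1+\mu}{2}>1 .
\]
Then $F(K_\gamma\cap B_r)\subset K_\gamma$ and $|a'|\ge\kappa|a|$.

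(3) \emph{Escape.} Take $\rho_0=r$. Given $\delta>0$, pick $z_0=(a_0u_1,0)$ with $0<|a_0|<\delta$; this point lies in $K_\gamma$ and in the phase space, since $v=0\ge0$. As long as the iterates stay in $B_r$, they remain in $K_\gamma$ and $|a_k|\ge\kappa^k|a_0|$. This growth forces $\|F^k(z_0)\|_2\ge|a_k|\geq r$ for some $k$, which contradicts remaining in $B_r$. So some iterate has norm at least $\rho_0$, which proves Lyapunov instability.

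The main obstacle is step (2). First, the remainder is only $O(\|z\|^{3/2})$ and is not differentiable, so the argument must use merely the smallness $\|R(z)\|\le\eta\|z\|$ on small balls; the cone avoids any need for derivatives of $R$. Second, the top eigenvalue may be repeated, which is handled by the spectral gap $\nu<\mu$ between $E$ and its complement. Third, the memory block must be checked: its linear part $\beta$ contracts, but the remainder $(1-\beta)(Ax)^{\odot2}$ feeds $x$ back into $v$. This term is quadratic and hence absorbed into $\eta$, but the bookkeeping should be done with explicit constants taken from \eqref{eq:outside_remainder_bounds}.
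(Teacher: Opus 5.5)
Your proposal is correct and follows essentially the same route as the paper's proof. Both arguments build a forward-invariant cone about $E_{L_A}\times\{0\}$, where $J_0$ acts as $-(hL_A/\epsilon-1)I$, use the spectral gap on the orthogonal complement to absorb the $o(\|z\|_2)$ remainder from Lemma~\ref{lem:outside_relative_derivative}, and show that the orbit of a point $(a_0u_1,0)$ leaves a fixed ball. The differences are only cosmetic: you measure the complementary part with $\|x_\perp\|_2+\|v\|_2$, a general aperture $\gamma$, and the explicit $r^{1/2}$ rate, whereas the paper uses the Euclidean norm of the $\mathcal U^\perp$ component, aperture one, and just $R(z)=o(\|z\|_2)$.
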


\begin{proof} By Lemma~\ref{lem:outside_relative_derivative}, \(F(z)=J_0z+R(z), \quad R(z)=o(\|z\|_2),\) as \(z\to0, \quad z\in \mathbb R^d\times\mathbb R_{\geq0}^d.\) Let \(\mathcal U:=E_{L_A}\times\{0\},\) where \(E_{L_A}\) is the eigenspace of \(A\) associated with its largest eigenvalue \(L_A\). On \(\mathcal U\), \(J_0|_{\mathcal U}=-aI, \quad a:=\frac{hL_A}{\epsilon}-1>1.\)

Since \(A\) is symmetric and positive semidefinite, \(\mathcal U^\perp\) is invariant under \(J_0\). Moreover, \(b:= \left\| J_0|_{\mathcal U^\perp} \right\|_2 <a.\) Indeed, the eigenvalues of the \(x\)-block of \(J_0\) are \(1-\frac{h}{\epsilon}\lambda_j, \quad 0\leq\lambda_j\leq L_A,\) while the eigenvalues in the \(v\)-directions are equal to \(\beta\). Since \(\frac{hL_A}{\epsilon}>2,\) all eigenvalues corresponding to \(\lambda_j<L_A\), as well as \(\beta\), have modulus strictly smaller than \(a\).

Choose \(\eta>0\) sufficiently small that \begin{equation} a-2\eta > \max\{1,b+2\eta\}. \label{eq:outside-cone-gap} \end{equation} Since \(R(z)=o(\|z\|_2)\), there exists \(\varrho_0>0\) such that \(\|R(z)\|_2 \leq \eta\|z\|_2\) whenever \(z\in B_{\varrho_0}(0) \cap \left( \mathbb R^d\times\mathbb R_{\geq0}^d \right).\)

Let \(\Pi_{\mathcal U}\) and \(\Pi_{\mathcal U^\perp}\) denote the orthogonal projections onto \(\mathcal U\) and \(\mathcal U^\perp\), respectively, and define the cone \[ \mathcal C := \left\{ z=u+w: u\in\mathcal U,\; w\in\mathcal U^\perp,\; \|w\|_2\leq\|u\|_2 \right\}. \] If \(z=u+w \in \mathcal C \cap B_{\varrho_0}(0) \cap \left( \mathbb R^d\times\mathbb R_{\geq0}^d \right),\) then \(\|z\|_2 \leq \|u\|_2+\|w\|_2 \leq 2\|u\|_2.\) Consequently, \begin{align*} \|\Pi_{\mathcal U}F(z)\|_2 &\geq a\|u\|_2-\|R(z)\|_2 \geq (a-2\eta)\|u\|_2, \end{align*} while \begin{align*} \|\Pi_{\mathcal U^\perp}F(z)\|_2 &\leq b\|w\|_2+\|R(z)\|_2 \leq (b+2\eta)\|u\|_2. \end{align*} By~\eqref{eq:outside-cone-gap}, \(\|\Pi_{\mathcal U^\perp}F(z)\|_2 < \|\Pi_{\mathcal U}F(z)\|_2,\) and \(\|\Pi_{\mathcal U}F(z)\|_2 \geq (a-2\eta)\|u\|_2 > \|u\|_2.\) Hence the cone is forward invariant as long as the orbit remains in \(B_{\varrho_0}(0)\), and the \(\mathcal U\)-component expands by the uniform factor \(q:=a-2\eta>1.\)

Let \(e_{\max}\in E_{L_A}\) be a unit eigenvector. For any \(\delta>0\), choose \(0<t<\min\{\delta,\varrho_0\}\) and initialize \(z_0=(te_{\max},0).\) Then \(z_0\in \mathcal C \cap \left( \mathbb R^d\times\mathbb R_{\geq0}^d \right).\) If the forward orbit remained in \(B_{\varrho_0}(0)\) for all \(k\geq0\), cone invariance would give \(\|\Pi_{\mathcal U}F^k(z_0)\|_2 \geq q^k t,\) which is impossible since \(q>1\). Therefore the orbit of an arbitrarily small initial condition leaves \(B_{\varrho_0}(0)\). Thus the origin is Lyapunov unstable. \end{proof}

\subsection{Global trapping}
We now show that every RMSProp orbit eventually enters a compact forward-invariant set. The argument uses a Lyapunov or energy-like function that decreases outside a bounded region. Indeed, the argument is similar to that found in the proof of Theorem \ref{thm:dimitrieski_global_stability}, but adapted to the quadratic loss function and taking into account the loss of stability of the origin. We first state a general trapping criterion, then establish the required decrease for normalized RMSProp and obtain bounds in the original coordinates.

\begin{theorem}
\label{thm:exterior-energy-trapping}
Let $F:\mathbb R^d\times\mathbb R_{\ge0}^d\to\mathbb R^d\times\mathbb R_{\ge0}^d$ and $V:\mathbb R^d\times\mathbb R_{\ge0}^d\to\mathbb R$ be continuous. Suppose $V$ is radially unbounded, namely
\begin{equation}
V(z)\longrightarrow+\infty\quad\text{as }\|z\|\longrightarrow\infty,
\qquad z\in\mathbb R^d\times\mathbb R_{\ge0}^d.
\label{eq:trapping-radial-unboundedness}
\end{equation}
Let $R\subset\mathbb R^d\times\mathbb R_{\ge0}^d$ be bounded, and assume
\begin{equation}
V(F(z))<V(z)\qquad\text{for every }z\in(\mathbb R^d\times\mathbb R_{\ge0}^d)\setminus R.
\label{eq:trapping-exterior-decrease}
\end{equation}
Then there exists $C_0>0$ such that, for every $C\ge C_0$, the sublevel set $\Omega_C=\{z\in\mathbb R^d\times\mathbb R_{\ge0}^d:V(z)\le C\}$ has the following properties:
\begin{enumerate}
\renewcommand{\labelenumi}{(\roman{enumi})}
\setlength{\itemsep}{2pt}
\item \textbf{Compactness.} $\Omega_C$ is compact.
\item \textbf{Containment of the region.} $R\subset\operatorname{int}_{\mathbb R^d\times\mathbb R_{\ge0}^d}(\Omega_C)$.
\item \textbf{Strict trapping.} $F(\Omega_C)\subset\operatorname{int}_{\mathbb R^d\times\mathbb R_{\ge0}^d}(\Omega_C)$. In particular, an orbit cannot leave $\Omega_C$ after entering.
\item \textbf{Finite-time entry.} For every $z_0\in\mathbb R^d\times\mathbb R_{\ge0}^d$, there exists an integer $N=N(z_0,C)\ge0$ such that $F^n(z_0)\in\Omega_C$ for all $n\ge N$.
\end{enumerate}
The interiors in parts (ii) and (iii) are relative to $\mathbb R^d\times\mathbb R_{\ge0}^d$, so neighborhoods are taken within this state space.
\end{theorem}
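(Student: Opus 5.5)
We will use a standard Lyapunov-type compactness argument. Write $X:=\mathbb R^d\times\mathbb R_{\ge0}^d$, which is closed in $\mathbb R^{2d}$. Since $R$ is bounded, $\overline R$ is compact. Since $F$ and $V$ are continuous, $V$ and $V\circ F$ are bounded on $\overline R$. Set
\[
M:=\max\Bigl\{\sup_{\overline R\cap X}V,\ \sup_{\overline R\cap X}V\circ F\Bigr\}<\infty,
\]
and take $C_0>M$. Parts (i) and (ii) then follow directly. For (i), $\Omega_C$ is closed in $X$, hence closed in $\mathbb R^{2d}$, by continuity of $V$. It is bounded by the radial unboundedness~\eqref{eq:trapping-radial-unboundedness}: there is $r$ with $V>C$ whenever $\|z\|>r$. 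For (ii), note that $\{V<C\}$ is relatively open in $X$, is contained in $\Omega_C$, and contains $R$, because $V\le M<C$ on $R$.

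For (iii), let $z\in\Omega_C$. If $z\in R$, then $V(F(z))\le M<C$. If $z\notin R$, then the exterior decrease~\eqref{eq:trapping-exterior-decrease} gives $V(F(z))<V(z)\le C$. In both cases $F(z)$ lies in the relatively open set $\{V<C\}\subset\operatorname{int}_X\Omega_C$. Forward invariance follows.

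The main step is (iv), and we argue by contradiction. Since $\Omega_C$ is forward invariant, it suffices to show the orbit enters $\Omega_C$ once. Suppose $z_n:=F^n(z_0)\notin\Omega_C$ for all $n$. Then no $z_n$ lies in $R$, since $R\subset\Omega_C$. Hence $V(z_n)$ is strictly decreasing and bounded below by $C$. Radial unboundedness then confines the orbit to the compact set $K:=\{z\in X: V(z)\le V(z_0)\}$.

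We now extract a limit point. Choose a convergent subsequence $z_{n_j}\to z^*\in K$, and let $V_\infty:=\lim V(z_n)\ge C$. By continuity, $V(z^*)=V_\infty$ and $V(F(z^*))=\lim V(z_{n_j+1})=V_\infty$. Since $V(z^*)=V_\infty\ge C>M$, the limit $z^*$ is not in $R$, so \eqref{eq:trapping-exterior-decrease} gives $V(F(z^*))<V(z^*)$. This contradicts $V(F(z^*))=V(z^*)$. Therefore some $z_N\in\Omega_C$, and by (iii) $z_n\in\Omega_C$ for all $n\ge N$.

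The only delicate point is that the decrease is strict but not uniform. This is exactly why the compactness/limit-point argument is needed, rather than a sum of fixed decrements. A second subtlety is that $R$ need not be closed: the limit $z^*$ could lie in $\overline R\setminus R$. Choosing $C_0>M$ with $M$ defined using $\overline R$ handles this, since it guarantees $V(z^*)>M$ and hence $z^*\notin\overline R$.
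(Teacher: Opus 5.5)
Your proof is correct and follows the paper's argument essentially step for step: the same $C_0$ chosen above the maxima of $V$ and $V\circ F$ on $\overline R$, and the same two-case argument for strict trapping. Take $C_0>\max\{0,M\}$ to get the stated positivity, which also covers $R=\emptyset$.

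The only variation is in (iv). There the paper uses the positive minimum $\delta$ of the continuous function $V-V\circ F$ on the compact annulus $\{C\le V\le V(z_0)\}$, which is disjoint from $R$, and concludes $V(z_n)\le V(z_0)-n\delta$. So, contrary to your closing remark, compactness does give a uniform decrement, and with it an explicit entry-time bound. Your limit-point argument is an equally valid alternative.
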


The theorem is closely related to standard results on Lyapunov functions for discrete time dynamical systems.  The paper of Kalman and Bertram \cite{kalman1960control} as well as the work of LaSalle \cite{lasalle1966invariance} are excellent resources.  A proof of the theorem allowing application on the cone $\mathbb R^d\times\mathbb R_{\ge0}^d$ requires small adjustments, and is given in Appendix~\ref{app:exterior-energy-trapping-proof}.

The rest of this section is devoted to showing that the epsilon-outside RMSProp map admits an energy satisfying conditions~\eqref{eq:trapping-radial-unboundedness} and~\eqref{eq:trapping-exterior-decrease}, which imply that every orbit eventually enters and remains in a compact trapping set.

\subsubsection{Normalization and energy estimates}
\label{sec:trapping-normalization-energy}
First, we introduce the notation and normalization used below. Let $A=A^\top\succ0$ be $d\times d$, $0<\beta<1$, and $\epsilon>0$. Write $m=\lambda_{\min}(A)$, $L_A=\lambda_{\max}(A)$, and $h_*=2\epsilon/L_A$. For $h>0$, set 
\begin{equation}\label{eq:rmsprop-normalization}
X=x/h,\qquad W=\sqrt[\odot]{v}/h,\qquad \text{and}\quad c=\epsilon/h.
\end{equation}
 The normalized map $G=(G_1,G_2)$ has memory and spatial components
\begin{equation}
\begin{aligned}
G_2(X,W)&=\sqrt[\odot]{\beta W^{\odot2}+(1-\beta)(AX)^{\odot2}},\\
G_1(X,W)&=X-AX\oslash\bigl(G_2(X,W)+c\mathbf1\bigr).
\end{aligned}
\label{eq:normalized-rmsprop-drift-lemma}
\end{equation}
For each fixed $h>0$, this map is continuous on $\mathbb R^d\times\mathbb R_{\ge0}^d$ because $c>0$.

For the following energy estimates, assume $h\ge h_*$. Throughout the remainder of this section, we make extensive use of the energy function
\begin{equation}
\mathcal V(X,W)=\frac12X^\top AX+\alpha\|W\|_1,
\label{eq:rmsprop_lyapunov_energy}
\end{equation}
where $\alpha>0$ is chosen in Lemma~\ref{lem:rmsprop-normalized-increment}. This energy is continuous and satisfies condition~\eqref{eq:trapping-radial-unboundedness}, since
\[
\mathcal V(X,W)\ge\frac m2\|X\|_2^2+\alpha\|W\|_1\longrightarrow+\infty
\quad\text{as }\|(X,W)\|\longrightarrow\infty,
\]
with $(X,W)\in\mathbb R^d\times\mathbb R_{\ge0}^d$.

To establish condition~\eqref{eq:trapping-exterior-decrease}, we first bound the changes in the spatial and memory terms separately. We then estimate their combined contribution coordinate by coordinate, distinguishing whether the memory or the gradient is large. This shows that the energy decreases outside a bounded set. The trapping criterion then applies, and an explicit sublevel bound gives the corresponding size estimates in the original coordinates.

\begin{remark}[Choice of energy for the scaling bounds]\label{rem:energy-scaling}
We choose the energy~\eqref{eq:rmsprop_lyapunov_energy} with the bounds on the size of the trapping region in mind. After the normalization~\eqref{eq:rmsprop-normalization}, the learning rate appears in the map only through $c=\epsilon/h$, which is bounded above for $h\ge h_*$. With this energy, we choose the constants in the decrease estimate~\eqref{eq:master_drift} and the trapping sublevel threshold independently of $h$. Returning to the original variables then gives bounds proportional to $h$ for $x$ and to $h^2$ for $v$. Dimitrieski et al.~\cite{dimitrieski2026rmsprop} use a Lyapunov function to prove global convergence and boundedness under bounded time-varying step sizes. Their function includes a $3/2$ power of the loss and a logarithmic memory term, and their estimate does not directly give these two scaling bounds. Our energy is therefore better suited to the particular size estimate we seek here.
\end{remark}

\begin{lemma}[Normalized Lyapunov increments]
\label{lem:rmsprop-normalized-increment}
Under the preceding hypotheses and notation, set $u=X-G_1(X,W)$ and $c_*=L_A/2$, so $c\le c_*$. Define
\[
K=\max\left\{1,\frac{4\sqrt{1-\beta}}{1-\sqrt\beta}\right\},\qquad
M=K\sqrt\beta+\sqrt{1-\beta},\qquad \alpha=\frac1{8M\sqrt{1-\beta}},
\]
and use $\mathcal V$ from~\eqref{eq:rmsprop_lyapunov_energy}. Then
\begin{equation*}
\mathcal V(G(X,W))-\mathcal V(X,W)
\le\sum_{j=1}^d B_j+\frac{L_A d}{2(1-\beta)},
\end{equation*}
where
\begin{equation}
B_j=-\frac{|(AX)_j|^2}{\sqrt\beta W_j+\sqrt{1-\beta}|(AX)_j|+c_*}
+\alpha\sqrt{1-\beta}|(AX)_j|-\alpha(1-\sqrt\beta)W_j.
\label{eq:coordinate-drift-definition}
\end{equation}
Moreover, $\|u\|_2^2\le d/(1-\beta)$.
\end{lemma}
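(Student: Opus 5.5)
The plan is to split the increment $\mathcal V(G(X,W))-\mathcal V(X,W)$ into a spatial part and a memory part and bound each coordinatewise. The only tool needed is the subadditivity $\sqrt{p+q}\le\sqrt p+\sqrt q$ for $p,q\ge0$. The specific values of $K$, $M$ and $\alpha$ play no role here beyond $\alpha>0$; they only matter later, when the $B_j$ are shown to be negative outside a bounded set.

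\emph{Spatial part.} Write $G_1=X-u$ with $u=AX\oslash(G_2+c\mathbf 1)$. Since $A$ is symmetric,
\[
\tfrac12(X-u)^\top A(X-u)-\tfrac12X^\top AX=-u^\top AX+\tfrac12u^\top Au\le-\sum_j\frac{(AX)_j^2}{G_{2,j}+c}+\frac{L_A}{2}\|u\|_2^2 .
\]
To bound $\|u\|_2$, note that $G_{2,j}\ge\sqrt{1-\beta}\,|(AX)_j|$. Hence $|u_j|\le 1/\sqrt{1-\beta}$ whenever $(AX)_j\neq0$, and $u_j=0$ otherwise because $c>0$. This gives $\|u\|_2^2\le d/(1-\beta)$, which is the final claim of the lemma. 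Multiplying by $L_A/2$ produces the constant $L_A d/(2(1-\beta))$.

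For the negative term, subadditivity of the square root gives $G_{2,j}\le\sqrt\beta\,W_j+\sqrt{1-\beta}\,|(AX)_j|$, using $W\ge0$. Together with $c\le c_*$, which holds because $h\ge h_*$ means $c=\epsilon/h\le L_A/2$, this enlarges the denominator. We obtain
\[
-\frac{(AX)_j^2}{G_{2,j}+c}\le-\frac{(AX)_j^2}{\sqrt\beta\,W_j+\sqrt{1-\beta}\,|(AX)_j|+c_*}.
\]

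\emph{Memory part.} All entries of $W$ and $G_2$ are nonnegative, so the $\ell^1$ norms are plain sums. The same subadditivity gives
\[
\alpha\bigl(\|G_2\|_1-\|W\|_1\bigr)\le\alpha\sum_j\Bigl(\sqrt{1-\beta}\,|(AX)_j|-(1-\sqrt\beta)W_j\Bigr).
\]
Adding the spatial and memory bounds coordinate by coordinate yields exactly $\sum_j B_j+L_A d/(2(1-\beta))$.

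There is no real obstacle. The only points needing care are the degenerate coordinates with $(AX)_j=0$, where $c>0$ keeps $u_j=0$ and $B_j$ well defined, and keeping track of the direction of each inequality when denominators are enlarged.
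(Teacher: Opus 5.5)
Your proposal is correct and follows essentially the same route as the paper: you split the increment into the spatial part $-u^\top AX+\tfrac12u^\top Au$ and the memory part, and you use the two-sided bound $\sqrt{1-\beta}\,|(AX)_j|\le [G_2]_j\le\sqrt\beta\, W_j+\sqrt{1-\beta}\,|(AX)_j|$. This bound gives $|u_j|\le 1/\sqrt{1-\beta}$, enlarges the denominator together with $c\le c_*$, and controls the $\ell^1$ memory increment, after which you add the coordinatewise bounds exactly as the paper does. Your remark that the specific values of $K$, $M$, $\alpha$ matter here only through $\alpha>0$ is also consistent with the paper, where they are used only in the subsequent uniform-drift lemma.
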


\begin{proof}
Fix $(X,W)\in\mathbb R^d\times\mathbb R_{\geq0}^d$. Then
\[
\mathcal V(G(X,W))-\mathcal V(X,W)
=\tfrac12\bigl(G_1(X,W)^\top AG_1(X,W)-X^\top AX\bigr)
+\alpha\bigl(\|G_2(X,W)\|_1-\|W\|_1\bigr).
\]
We bound the first term and the second term individually, then add their upper bounds. To obtain the estimates needed for both terms, the memory update gives
\[
[G_2(X,W)]_j^2=\beta W_j^2+(1-\beta)(AX)_j^2
\ge(1-\beta)(AX)_j^2.
\]
Taking nonnegative square roots yields the lower bound below. For the upper bound, apply $\sqrt{a+b}\le\sqrt a+\sqrt b$ for $a,b\ge0$, using $W_j\ge0$:
\begin{equation}
\sqrt{1-\beta}|(AX)_j|\le [G_2(X,W)]_j
\le\sqrt\beta W_j+\sqrt{1-\beta}|(AX)_j|.
\label{eq:memory-upper-bound-drift-lemma}
\end{equation}

\emph{Spatial term.} Substituting $G_1(X,W)=X-u$ and using $A=A^\top$ to combine the cross terms gives
\[
\tfrac12(G_1(X,W)^\top AG_1(X,W)-X^\top AX)
=-u^\top AX+\tfrac12u^\top Au
=-\sum_j\frac{(AX)_j^2}{[G_2(X,W)]_j+c}+\tfrac12u^\top Au.
\]
The lower memory bound implies
\[
|u_j|=\frac{|(AX)_j|}{[G_2(X,W)]_j+c}
\le\frac{|(AX)_j|}{\sqrt{1-\beta}|(AX)_j|+c}
\le\frac1{\sqrt{1-\beta}}.
\]
This also holds when $(AX)_j=0$, since $c>0$. Summing squares gives $\|u\|_2^2\le d/(1-\beta)$. Since $u^\top Au\le L_A\|u\|_2^2$, we also obtain $u^\top Au/2\le L_A d/[2(1-\beta)]$. On the other hand, the upper memory bound and $c\le c_*$ give $[G_2(X,W)]_j+c\le\sqrt\beta W_j+\sqrt{1-\beta}|(AX)_j|+c_*$. Since $t\mapsto-g^2/t$ is increasing for $t>0$, we conclude that
\begin{equation}
\tfrac12G_1(X,W)^\top AG_1(X,W)-\tfrac12X^\top AX
\le-\sum_j\frac{|(AX)_j|^2}{\sqrt\beta W_j+\sqrt{1-\beta}|(AX)_j|+c_*}
+\frac{L_A d}{2(1-\beta)}.
\label{eq:quadratic-increment-bound}
\end{equation}

\emph{Memory term.} Nonnegativity of $W$ and $G_2(X,W)$ lets us write their $1$-norms as sums. Subtracting $W_j$ from the upper bound in~\eqref{eq:memory-upper-bound-drift-lemma}, multiplying by $\alpha$, and summing yields
\begin{equation}
\alpha(\|G_2(X,W)\|_1-\|W\|_1)
=\alpha\sum_j([G_2(X,W)]_j-W_j)
\le\alpha\sum_j\bigl[\sqrt{1-\beta}|(AX)_j|-(1-\sqrt\beta)W_j\bigr].
\label{eq:memory-increment-bound}
\end{equation}

\emph{Sum of the two terms.} Adding~\eqref{eq:quadratic-increment-bound} and~\eqref{eq:memory-increment-bound} gives
\[
\mathcal V(G(X,W))-\mathcal V(X,W)
\le\sum_{j=1}^d B_j+\frac{L_A d}{2(1-\beta)},
\]
where $B_j$ is defined in~\eqref{eq:coordinate-drift-definition}.
\end{proof}

\begin{lemma}[Uniform bound on the energy change]
\label{lem:rmsprop-uniform-drift}
Under the preceding hypotheses and notation, define
\[
\gamma=\min\left\{\frac{\alpha(1-\sqrt\beta)}8,\frac1{8M(1+K)}\right\},\qquad
H_0=\frac{c_*}{M}\bigl(\alpha\sqrt{1-\beta}+\gamma(1+K)\bigr),\qquad
H=\frac{L_A d}{2(1-\beta)}+dH_0.
\]
These constants are positive and depend only on $(A,\beta,d)$. For every state $(X,W)$ and coordinate $j$,
\begin{align}
B_j&\le-\gamma(|(AX)_j|+W_j)+H_0,\label{eq:uniform-coordinate-drift}\\
\mathcal V(G(X,W))-\mathcal V(X,W)
&\le-\gamma(\|AX\|_1+\|W\|_1)+H.\label{eq:master_drift}
\end{align}
\end{lemma}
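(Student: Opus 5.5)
The plan is to prove the coordinate bound \eqref{eq:uniform-coordinate-drift} by a case split on whether the memory or the gradient dominates in coordinate $j$. Summing over $j$ and invoking Lemma~\ref{lem:rmsprop-normalized-increment} will then give \eqref{eq:master_drift}. Fix $j$ and write $g=|(AX)_j|\ge0$, $w=W_j\ge0$, and $D=\sqrt\beta\,w+\sqrt{1-\beta}\,g+c_*$. Positivity of the constants is immediate, since $0<\beta<1$, $K\ge1$, $M>0$ and $c_*=L_A/2>0$.

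\emph{Case 1: $w\ge Kg$ (memory large).} I drop the nonpositive term $-g^2/D$. Since $g\le w/K$ and $K\ge 4\sqrt{1-\beta}/(1-\sqrt\beta)$, we get $\alpha\sqrt{1-\beta}\,g\le \tfrac14\alpha(1-\sqrt\beta)w$, and hence $B_j\le-\tfrac34\alpha(1-\sqrt\beta)w$. On the other hand $g+w\le(1+1/K)w\le 2w$, so $\gamma(g+w)\le 2\gamma w\le \tfrac14\alpha(1-\sqrt\beta)w$ by the first entry in the definition of $\gamma$. Therefore $B_j\le-\gamma(g+w)\le-\gamma(g+w)+H_0$.

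\emph{Case 2: $w<Kg$ (gradient large).} Here $D\le (K\sqrt\beta+\sqrt{1-\beta})g+c_*=Mg+c_*$, so $-g^2/D\le -g^2/(Mg+c_*)$. I drop $-\alpha(1-\sqrt\beta)w\le0$ and use $\gamma(g+w)\le\gamma(1+K)g$. Setting $\kappa=\alpha\sqrt{1-\beta}+\gamma(1+K)$, it then suffices to show
\[
-\frac{g^2}{Mg+c_*}+\kappa g\le H_0=\frac{\kappa c_*}{M}.
\]
Combining over the common denominator gives
\[
-\frac{g^2}{Mg+c_*}+\kappa g=\frac{g\bigl((\kappa M-1)g+\kappa c_*\bigr)}{Mg+c_*}.
\]
By the choices $\alpha\sqrt{1-\beta}=1/(8M)$ and $\gamma(1+K)\le 1/(8M)$ we have $\kappa M\le\tfrac14<1$, so the numerator is at most $\kappa c_* g$. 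Since $g/(Mg+c_*)\le 1/M$, the expression is at most $\kappa c_*/M=H_0$.

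The only delicate point is Case 2. A naive linearization $g^2/(Mg+c_*)\ge g/M-c_*/M^2$ produces the constant $c_*/M^2$, which exceeds $H_0$. One must instead keep the exact rational expression and use $\kappa M<1$, as above. Finally, summing \eqref{eq:uniform-coordinate-drift} over $j$ gives $\sum_j B_j\le-\gamma(\|AX\|_1+\|W\|_1)+dH_0$. Inserting this into Lemma~\ref{lem:rmsprop-normalized-increment} yields \eqref{eq:master_drift} with $H=\frac{L_Ad}{2(1-\beta)}+dH_0$.
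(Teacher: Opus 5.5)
Your proof is correct, and it follows a slightly different route from the paper's. Your Case 1 is the same as the paper's. In the gradient-dominated regime $w<Kg$, however, the paper splits further at $g=c_*/M$:
\begin{itemize}
\item When $g\ge c_*/M$, it uses $Mg+c_*\le 2Mg$ to get $B_j\le -\tfrac{3g}{8M}\le-\gamma(g+w)$, with no additive constant.
\item When $g<c_*/M$, it drops the fractional term altogether and bounds $\bigl(\alpha\sqrt{1-\beta}+\gamma(1+K)\bigr)g$ by $H_0$ directly from $g<c_*/M$.
\end{itemize}
You instead treat all of $w<Kg$ at once. You write $-g^2/(Mg+c_*)+\kappa g$ over a common denominator and use $\kappa M\le\tfrac14<1$ together with $g/(Mg+c_*)\le 1/M$.

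Both routes give exactly the same $H_0$. Yours avoids the threshold and is a bit more compact. The paper's split makes visible that the per-coordinate term is strictly negative, not merely bounded by $H_0$, once $g\ge c_*/M$. Each of its subcases also needs only one crude inequality.

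Your side remark is accurate: the naive linearization would only give the constant $c_*/M^2$, which exceeds $H_0$. The paper's extra split at $g=c_*/M$ is what lets it avoid that loss without the exact rational computation you use.
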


\begin{proof}
Fix $(X,W)$ and $j$, write $g=|(AX)_j|$ and $w=W_j$, and estimate $B=B_j$ in three cases.

\emph{Case 1: $w\ge Kg$.} Dropping the nonpositive fractional term and using $K\ge4\sqrt{1-\beta}/(1-\sqrt\beta)$ gives
\[
B\le\alpha\sqrt{1-\beta}g-\alpha(1-\sqrt\beta)w
\le-\tfrac34\alpha(1-\sqrt\beta)w\le-\gamma(g+w),
\]
since $K\ge1$ implies $g+w\le2w$ and $\gamma\le\alpha(1-\sqrt\beta)/8$.

\emph{Case 2: $w<Kg$ and $g\ge c_*/M$.} The denominator in $B$ is at most $Mg+c_*\le2Mg$. Dropping the nonpositive memory term and using $\alpha\sqrt{1-\beta}=1/(8M)$ gives
\[
B\le-\frac g{2M}+\alpha\sqrt{1-\beta}g=-\frac{3g}{8M}\le-\gamma(g+w),
\]
because $g+w<(1+K)g$ and $\gamma\le1/[8M(1+K)]$.

\emph{Case 3: $w<Kg$ and $g<c_*/M$.} Dropping both nonpositive terms gives
\[
B+\gamma(g+w)\le\alpha\sqrt{1-\beta}g+\gamma(g+w)
\le\frac{c_*}{M}\bigl(\alpha\sqrt{1-\beta}+\gamma(1+K)\bigr)=H_0.
\]
Since $H_0>0$, all three cases imply~\eqref{eq:uniform-coordinate-drift}. Summing over $j$ and applying Lemma~\ref{lem:rmsprop-normalized-increment} gives~\eqref{eq:master_drift}.
\end{proof}

Define the bounded set
\begin{equation}
R_*=\frac{2H}{\gamma},\qquad
\mathcal B=\{(X,W)\in \mathbb R^d\times\mathbb R_{\ge0}^d:\|AX\|_1+\|W\|_1\le R_*\}.
\label{eq:trapping-core}
\end{equation}
On $\mathcal B$ we have $m\|X\|_2\le\|AX\|_2\le R_*$ and $\|W\|_1\le R_*$. Outside $\mathcal B$, \eqref{eq:master_drift} gives
\[
\mathcal V(G(X,W))-\mathcal V(X,W)<-\gamma R_*+H=-H<0.
\]
Thus the hypotheses of Theorem~\ref{thm:exterior-energy-trapping} hold. We now obtain a trapping set of the form $\{\mathcal V\le C\}$, with $C$ independent of $h$, and translate its bounds into the original coordinates.

\begin{theorem}[RMSProp trapping region and scaling bounds]
\label{thm:rmsprop-trapping-bound}
Under the preceding hypotheses and notation, define
\begin{equation*}
B_0=\frac{L_A R_*^2}{2m^2}+\alpha R_*,\qquad
C=B_0+2H,\qquad C_x=\sqrt{2C/m},\qquad C_v=(C/\alpha)^2.
\end{equation*}
These positive constants depend only on $(A,\beta,d)$. For every $h\ge h_*$, the set $\mathcal S=\{(X,W)\in \mathbb R^d\times\mathbb R_{\ge0}^d:\mathcal V(X,W)\le C\}$ is compact and satisfies
\[
\mathcal B\subset\operatorname{int}_{\mathbb R^d\times\mathbb R_{\ge0}^d}(\mathcal S),\qquad
G(\mathcal S)\subset\operatorname{int}_{\mathbb R^d\times\mathbb R_{\ge0}^d}(\mathcal S).
\]
Every normalized orbit enters $\mathcal S$ in finite time. In the original coordinates, every epsilon-outside RMSProp orbit with $v_0\ge0$ enters and permanently remains in the compact trapping region
\[
\begin{aligned}\mathcal K_h&=\left\{(x,v)\in \mathbb R^d\times\mathbb R_{\ge0}^d:\mathcal V\left(x/h,\sqrt[\odot]{v}/h\right)\le C\right\}
\\
&\subset[-C_xh,C_xh]^d\times[0,C_vh^2]^d,\end{aligned}
\]
with $F(\mathcal K_h)\subset\operatorname{int}_{\mathbb R^d\times\mathbb R_{\ge0}^d}(\mathcal K_h)$. Consequently, for $h\ge h_*$,
\[
\limsup_{n\to\infty}\|x_n\|_\infty \leq C_x h
\quad \quad \mbox{and} \quad \quad
\limsup_{n\to\infty}\|v_n\|_\infty\leq C_v h^2.
\]
\end{theorem}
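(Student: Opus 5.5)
We argue entirely in the normalized coordinates and transfer back at the end. The proof rests on three facts: the drift bound \eqref{eq:master_drift} of Lemma~\ref{lem:rmsprop-uniform-drift}, the fact that the energy drops by at least $H$ outside $\mathcal B$, and a uniform one-step bound on how much the energy can increase inside $\mathcal B$. None of the constants $\gamma,H,R_*,\alpha$ depends on $h$; $h$ enters $G$ only through $c\le c_*$, and the lemmas already used $c_*$ in place of $c$. So $C$, $C_x$, $C_v$ depend only on $(A,\beta,d)$.

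First I bound $\mathcal V$ on $\mathcal B$. There $\|X\|_2\le\|AX\|_2/m\le\|AX\|_1/m\le R_*/m$, so $\tfrac12X^\top AX\le L_AR_*^2/(2m^2)$. Also $\alpha\|W\|_1\le\alpha R_*$, hence $\mathcal V\le B_0$ on $\mathcal B$. Since $C=B_0+2H>B_0$ and $\mathcal V$ is continuous, every point of $\mathcal B$ has a relative neighborhood on which $\mathcal V<C$. This gives $\mathcal B\subset\operatorname{int}(\mathcal S)$. Compactness of $\mathcal S$ follows from continuity and radial unboundedness of $\mathcal V$: it is closed in the closed set $\mathbb R^d\times\mathbb R^d_{\ge0}$ and bounded.

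Strict invariance is the main step. Take $z\in\mathcal S$ and split into two cases.
\begin{itemize}
\item If $z\notin\mathcal B$, then $\mathcal V(G(z))<\mathcal V(z)-H\le C-H<C$.
\item If $z\in\mathcal B$, then \eqref{eq:master_drift} with the nonpositive term dropped gives $\mathcal V(G(z))\le\mathcal V(z)+H\le B_0+H<C$.
\end{itemize}
In both cases $\mathcal V(G(z))<C$, and continuity turns this strict inequality into membership in the relative interior. This is exactly why $C$ carries the margin $2H$.

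Finite-time entry follows from Theorem~\ref{thm:exterior-energy-trapping}(iv) with $R=\mathcal B$. Alternatively, argue directly. While an orbit stays outside $\mathcal B$, $\mathcal V$ drops by at least $H$ per step and $\mathcal V\ge0$, so the orbit reaches $\mathcal B\subset\mathcal S$ in finitely many steps and then stays in $\mathcal S$ by invariance. The only subtlety is that $C$ must be fixed independently of $h$ rather than taken as "some $C\ge C_0$"; that is why I use the direct argument rather than quoting the theorem.

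To return to the original coordinates, note that $G$ is conjugate to $F$ via $\Phi(x,v)=(x/h,\sqrt[\odot]{v}/h)$, a homeomorphism of $\mathbb R^d\times\mathbb R^d_{\ge0}$; one checks $\Phi\circ F=G\circ\Phi$ from \eqref{eq:normalized-rmsprop-drift-lemma}. So $\mathcal K_h=\Phi^{-1}(\mathcal S)$ is compact and strictly forward invariant, and every orbit enters it in finite time. For the box, on $\mathcal S$ we have $\tfrac m2\|X\|_2^2\le C$, so $|x_i|\le h\|X\|_2\le h\sqrt{2C/m}=C_xh$. Likewise $\alpha W_i\le C$ gives $v_i=h^2W_i^2\le(C/\alpha)^2h^2=C_vh^2$. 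The $\limsup$ statements then follow immediately.
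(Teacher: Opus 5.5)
Your proposal is correct and follows essentially the paper's proof. In both, one bounds $\mathcal V\le B_0$ on $\mathcal B$, uses $\mathcal V\circ G\le\mathcal V+H$ there together with the strict decrease outside $\mathcal B$ to get $\mathcal V\circ G<C$ on $\mathcal S$, and then transfers the sublevel bounds back to $(x,v)$ through the conjugacy $(X,W)\mapsto(hX,h^2W^{\odot2})$. The only difference is that you prove trapping and finite-time entry directly, using the uniform decrease by more than $H$ outside $\mathcal B$; the paper instead checks that $C$ exceeds the threshold values in the appendix proof of Theorem~\ref{thm:exterior-energy-trapping} and invokes that argument.
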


\begin{proof}
On $\mathcal B$, the bounds following~\eqref{eq:trapping-core} imply $\mathcal V\le B_0$. Since \eqref{eq:master_drift} also gives $\mathcal V\circ G\le\mathcal V+H$ everywhere on $\mathbb R^d\times\mathbb R_{\ge0}^d$, we have
\[
\mathcal V(z)\le B_0<C,\qquad
\mathcal V(G(z))\le B_0+H<C\qquad(z\in\mathcal B).
\]
These bounds show that the level $C$ satisfies the threshold construction in the proof of Theorem~\ref{thm:exterior-energy-trapping}, given in Appendix~\ref{app:exterior-energy-trapping-proof}.

On $\mathcal S$, $(m/2)\|X\|_\infty^2\le\mathcal V\le C$ and $\alpha\|W\|_\infty\le\mathcal V\le C$. Hence $\|X\|_\infty\le C_x$ and $\|W\|_\infty^2\le C_v$. The homeomorphism $(X,W)\mapsto(hX,h^2W^{\odot2})$ conjugates $G$ to $F$ and maps $\mathcal S$ onto $\mathcal K_h$. It preserves relative interiors, compactness, and finite-time entry, so $\mathcal K_h$ has the asserted trapping properties and the bounds $\|x\|_\infty\le C_xh$ and $0\le v_j\le C_vh^2$. 
\end{proof}

\subsection{Normalized coordinates}\label{sec:normalized_coordinates}
Theorem~\ref{thm:rmsprop-trapping-bound} gives an eventual bound of order $O(h)$ in $x$ and $O(h^2)$ in $v$ when $h>h_*$. Moreover, the attractor radii experiments in Figure~\ref{fig:high-h-scaling} exhibit this same scaling. And for period-two orbits near $h_*$, Theorem~\ref{thm:period_two_collision} gives the sharper bounds $x=O(h-h_*)$ and $v=O((h-h_*)^2)$. These different scales in $x$ and $v$ coordinates make numerical analysis considerably more difficult, motivating the normalization outlined in this section.

For the numerical experiments, we use the normalized coordinates $X=x/h$ and $W=\sqrt[\odot]{v}/h$. In these coordinates, the trapping estimate bounds $X$ and $W$ independently of $h$ for $h\ge h_*$. The normalized epsilon-outside map $G:\mathbb R^d\times\mathbb R_{\geq0}^d\to\mathbb R^d\times\mathbb R_{\geq0}^d$ is given by~\eqref{eq:normalized-rmsprop-drift-lemma}, with $G(X_k,W_k)=(X_{k+1},W_{k+1})$.
Applying the same transformation to the epsilon-inside map $F_I$ yields the normalized map $G_I:\mathbb R^d\times\mathbb R_{\geq0}^d\to\mathbb R^d\times\mathbb R_{\geq0}^d$, with $G_I(X_k,W_k)=(X_{k+1},W_{k+1})$ and
\begin{equation*}
W_{k+1}=G_2(X_k,W_k),\qquad
X_{k+1}=X_k-AX_k\oslash\sqrt[\odot]{W_{k+1}^{\odot2}+\frac{\epsilon}{h^2}\mathbf1}.
\end{equation*}

The normalization is a change of coordinates for both epsilon conventions, where the phase space is $\mathbb R^d\times\mathbb R_{\geq0}^d$ in both the original and normalized coordinates.

\begin{proposition}[Conjugacy under normalization]
Fix $h>0$. The map
\[
H_h^W:\mathbb R^d\times\mathbb R_{\geq0}^d\to\mathbb R^d\times\mathbb R_{\geq0}^d,
\qquad
H_h^W(x,v)
    =\left(\frac{x}{h},
        \frac{\sqrt[\odot]{v}}{h}\right),
\]
is a homeomorphism with inverse $ (H_h^W)^{-1}(X,W) =\left(hX,h^2W^{\odot 2}\right).$ Moreover,
\[
H_h^W\circ F=G\circ H_h^W,
\qquad
H_h^W\circ F_I=G_I\circ H_h^W.
\]
Equivalently, the epsilon-outside maps satisfy
\[
G=H_h^W\circ F\circ(H_h^W)^{-1}.
\]
Consequently, each original-coordinate map and its normalized representation are topologically conjugate for every fixed $h>0$.
\end{proposition}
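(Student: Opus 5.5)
The argument splits into two parts: the homeomorphism property of $H_h^W$, and then the two conjugacy identities. Both come down to componentwise calculations on $\mathbb R^d\times\mathbb R_{\geq0}^d$. The key fact throughout is that $t\mapsto\sqrt t$ is a continuous bijection of $[0,\infty)$ onto itself, with continuous inverse $s\mapsto s^2$. Nonnegativity of the memory coordinates is exactly what makes this work.

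\emph{Homeomorphism.} The map $H_h^W$ sends $\mathbb R^d\times\mathbb R_{\geq0}^d$ into itself, since $\sqrt[\odot]{v}/h\geq0$ when $v\geq0$. It is continuous because it is a scaling in $x$ together with a componentwise square root, and $h>0$. The candidate inverse $(X,W)\mapsto(hX,h^2W^{\odot2})$ also maps the phase space into itself and is continuous. We check both compositions. First, $h\cdot x/h=x$. Second, $h^2(\sqrt{v_j}/h)^2=v_j$. In the other order, $hX/h=X$, and $\sqrt{h^2W_j^2}/h=|W_j|=W_j$ because $W_j\geq0$. This last step is the only place where restricting to the cone matters.

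\emph{Conjugacy for $F$.} Fix $(x,v)$ and write $(X,W)=H_h^W(x,v)$, so that $x=hX$ and $v=h^2W^{\odot2}$.

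For the memory component, substitute into \eqref{eq:memory}:
\[
F_V(x,v)=\beta h^2W^{\odot2}+(1-\beta)h^2(AX)^{\odot2}=h^2\bigl(G_2(X,W)\bigr)^{\odot2}.
\]
Hence $\sqrt[\odot]{F_V(x,v)}/h=G_2(X,W)$, using nonnegativity and $h>0$.

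For the spatial component,
\[
F_1(x,v)/h=X-AX\oslash\bigl(hG_2(X,W)+\epsilon\mathbf1\bigr)\cdot h/h\cdot
\]
which, after factoring $h$ out of the denominator, equals $X-AX\oslash(G_2(X,W)+c\mathbf1)=G_1(X,W)$ with $c=\epsilon/h$. Together these give $H_h^W\circ F=G\circ H_h^W$.

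\emph{Conjugacy for $F_I$.} The memory calculation is identical. For the spatial component, the denominator becomes
\[
\sqrt[\odot]{h^2G_2^{\odot2}+\epsilon\mathbf1}=h\sqrt[\odot]{G_2^{\odot2}+(\epsilon/h^2)\mathbf1},
\]
which gives exactly the stated form of $G_I$.

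\emph{Consequences.} Composing $H_h^W\circ F=G\circ H_h^W$ on the right with $(H_h^W)^{-1}$ yields $G=H_h^W\circ F\circ(H_h^W)^{-1}$, and topological conjugacy then follows immediately.

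No step poses a real obstacle. The only care needed is the identity $\sqrt{s^2}=s$, which holds only for $s\geq0$, and keeping track of where the factor $h$ is pulled out of the componentwise square roots and denominators.
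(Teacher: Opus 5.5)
Your proposal is correct and takes the same approach as the paper. The paper also uses that the componentwise square root is a homeomorphism of $\mathbb R_{\geq0}^d$ with inverse $W\mapsto W^{\odot2}$, and then substitutes $x=hX$, $v=h^2W^{\odot2}$ into both updates; you carry out that substitution in more detail. One cosmetic fix: the intermediate display for the spatial component should read $F_1(x,v)/h=X-hAX\oslash\bigl(hG_2(X,W)+\epsilon\mathbf1\bigr)$, because the stray ``$\cdot h/h\cdot$'' as written drops the factor $h$ that you then cancel against the denominator.
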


\begin{proof}
The componentwise square-root map is a homeomorphism of $\mathbb{R}_{\geq0}^d$, with inverse $W\mapsto W^{\odot2}$. The conjugacy identity follows directly by substituting $x=hX$ and $v=h^2W^{\odot2}$ into the corresponding RMSProp update.
\end{proof}

Thus the normalization preserves invariant sets, periodic orbits, Lyapunov stability, and asymptotic stability. The same conclusion holds for the mapping:
\[
H_h^w(x,v)
    =\left(\frac{x}{h},\frac{v}{h^2}\right),
\qquad
(H_h^w)^{-1}(X,w)=(hX,h^2w),
\]
for which the conjugacy is an invertible linear scaling.

Additionally, on compact invariant sets separated from $v_i=0$, this coordinate change and its inverse are smooth with bounded derivatives. Periodic-orbit Floquet multipliers and asymptotic Lyapunov exponents are then also preserved.

We also compare the size of the numerically observed attractors in the original and normalized coordinates. For each sampled learning rate, we discard an initial transient and estimate a radius about the sample mean from the remaining iterates, as described in the caption. Figure~\ref{fig:high-h-scaling} shows that, for large $h$, the normalized radii remain approximately constant, while the spatial and memory radii in the original coordinates grow approximately as $h$ and $h^2$. These measurements describe the size of the observed post-transient motion, while the trapping theorem gives an upper bound that applies to every orbit.

\begin{figure}[H]
    \centering
    \includegraphics[width=0.95\linewidth]{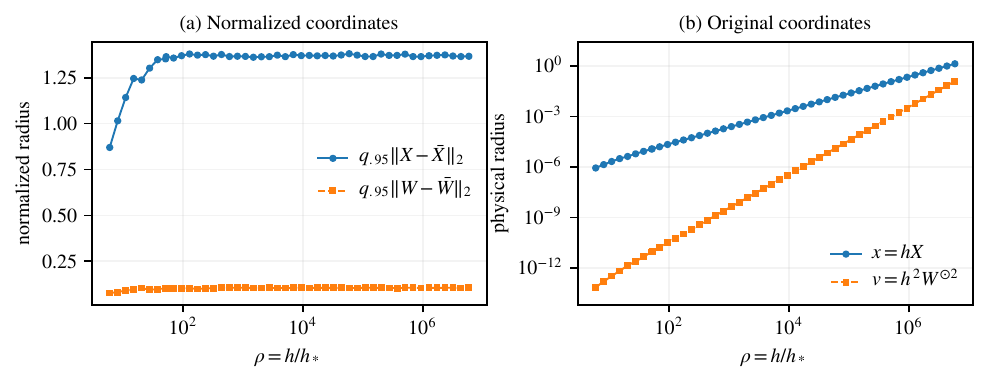}
    \caption{For the matrix and parameters in~\eqref{eq:case-matrix-rounded} and~\eqref{eq:canonical-parameters}, specified in Section~\ref{sec:numerical-parameters} below, large-$h$ orbit radii are approximately constant in normalized coordinates and scale as $h$ and $h^2$ in the original spatial and memory coordinates. For $Y\in\{X,W\}$, we define the centered radius $R_Y$ as the 95th percentile of the distances $\lVert Y_k-\bar Y\rVert_2$, where $\bar Y$ is the sample mean. We discard $100{,}000$ iterates and retain every fifth of the next $50{,}000$ states, giving $10{,}000$ observations. The right panel applies $x=hX$ and $v=h^2(W\odot W)$ before centering.}
    \label{fig:high-h-scaling}
\end{figure}

\section{Period-two orbits}\label{sec:period_two_orbits}

In the previous section, we show that every RMSProp orbit eventually enters a compact forward-invariant region. A natural next question is what kinds of dynamics occur there. We begin with period-two orbits for $h>h_*$.

We continue with the epsilon-outside map in normalized coordinates ($c=\epsilon/h$), given by~\eqref{eq:normalized-rmsprop-drift-lemma}. For a nontrivial period-two orbit $P_1=(X_1,W_1)\longleftrightarrow P_2=(X_2,W_2)$, we first show that the spatial coordinates are antipodal and the memory is the same at both points.

\begin{lemma}[Memory invariance and antipodal symmetry]\label{lem:period_two_invariance}
Every nontrivial period-two orbit satisfies
\begin{equation}\label{eq:period-two-symmetry}
W_1 = W_2 = |AX_1| = |AX_2| \quad\text{and}\quad X_2 = -X_1.
\end{equation}
\end{lemma}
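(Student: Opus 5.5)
The plan is to write out the two-step relations coordinate by coordinate and show that they force equal memory. Antipodality then follows from invertibility of $A$. Write $G(X_1,W_1)=(X_2,W_2)$ and $G(X_2,W_2)=(X_1,W_1)$ using~\eqref{eq:normalized-rmsprop-drift-lemma}.

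\emph{Step 1: the spatial equations.} The spatial equations are
\[
X_2=X_1-AX_1\oslash(W_2+c\mathbf1),\qquad X_1=X_2-AX_2\oslash(W_1+c\mathbf1).
\]
Adding them gives $AX_1\oslash(W_2+c\mathbf1)=-AX_2\oslash(W_1+c\mathbf1)$. Fix a coordinate $j$ and set $a=(W_1)_j$, $b=(W_2)_j$, $p=|(AX_1)_j|$, $q=|(AX_2)_j|$. Then
\[
(AX_1)_j(a+c)=-(AX_2)_j(b+c),\qquad\text{so}\qquad p(a+c)=q(b+c).
\]

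\emph{Step 2: the memory equations.} The memory equations read $b^2=\beta a^2+(1-\beta)p^2$ and $a^2=\beta b^2+(1-\beta)q^2$. Substituting one into the other and dividing by $1-\beta^2$ yields
\[
a^2=\frac{\beta p^2+q^2}{1+\beta},\qquad b^2=\frac{p^2+\beta q^2}{1+\beta}.
\]

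\emph{Step 3: equal memory (the main step).} We show $a=b$.

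First suppose $p=0$. Then $q(b+c)=0$ with $c>0$, so $q=0$, and the formulas give $a=b=0$.

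Otherwise $p>0$, hence $q>0$ and $b>0$. Put $t=q/p=(a+c)/(b+c)>0$. Then
\[
\frac{a^2}{b^2}=\frac{\beta+t^2}{1+\beta t^2}.
\]
\begin{itemize}
\item If $t>1$, then $a>b$. On the other hand $\beta+t^2<t^2+\beta t^4$ because $\beta>0$, so $a/b<t=(a+c)/(b+c)$. But for $a>b>0$ and $c>0$ we always have $(a+c)/(b+c)<a/b$. This is a contradiction.
\item If $t<1$, the inequalities reverse and give the symmetric contradiction.
\end{itemize}
Hence $t=1$, which means $a=b$ and $p=q$. Plugging back in, $a^2=(1+\beta)p^2/(1+\beta)=p^2$. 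Therefore $W_1=W_2=|AX_1|=|AX_2|$. The obstacle is exactly this monotonicity comparison: the ratio $(a+c)/(b+c)$ lies strictly between $1$ and $a/b$, while the memory relation pushes $a/b$ strictly toward $1$ relative to $t$.

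\emph{Step 4: antipodal symmetry.} With $W_1=W_2$, the relation from Step 1 becomes $(AX_1)_j=-(AX_2)_j$ for every $j$, since the common denominator $(W_1)_j+c$ is positive. Thus $A(X_1+X_2)=0$. Because $A\succ0$ is invertible, $X_2=-X_1$, which completes~\eqref{eq:period-two-symmetry}.

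Nontriviality of the orbit is not used in the argument. It only excludes the case where every coordinate falls into $p=0$, which would mean both points are the origin.
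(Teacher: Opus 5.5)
Your proof is correct and follows essentially the same route as the paper. Both use the same coordinatewise relations $(1+\beta)a^2=\beta p^2+q^2$, $(1+\beta)b^2=p^2+\beta q^2$, $p(a+c)=q(b+c)$ and the same invertibility step for $X_2=-X_1$; your ratio comparison of $a/b$ with $t=q/p$ is the paper's inequality in another form (the paper assumes $p>q$, gets $pa<qb$ from the spatial relation, and squares it against the memory formulas to reach $\beta p^4<\beta q^4$), with the only cosmetic difference being that you treat the case $p=0$ separately, whereas the paper's division-free inequality covers zero coordinates directly.
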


\begin{proof}
Fix a coordinate and abbreviate $u=(W_1)_i$, $v=(W_2)_i$, $a=|(AX_1)_i|$, and $b=|(AX_2)_i|$. Eliminating the memory equations and equating the magnitudes of the opposing spatial displacements give
\[
(1+\beta)u^2=\beta a^2+b^2,\qquad
(1+\beta)v^2=a^2+\beta b^2,\qquad a(u+c)=b(v+c).
\]
If $a>b$, the last identity gives $au<bv$. Squaring and substituting the first two identities then gives $\beta a^4<\beta b^4$, a contradiction. Interchanging $a,b$ excludes $b>a$, so $a=b$ and $u=v=a$. This argument includes zero coordinates.

Thus $W_1=W_2=|AX_1|=|AX_2|$. Adding the spatial updates now yields $A(X_1+X_2)\oslash(W_1+c\mathbf1)=0$. Positivity of the denominator and invertibility of $A$ imply $X_2=-X_1$.
\end{proof}

\begin{theorem}[Reduced period-two equation]\label{thm:period_two_reduction}
Period-two cycles modulo $X\sim -X$ correspond bijectively to nonzero solutions of
\begin{equation}
2X = AX\oslash\bigl(|AX|+c\mathbf{1}\bigr) \quad\Longleftrightarrow\quad AX = 2cX\oslash(\mathbf{1}-2|X|), \qquad c=\epsilon/h. \label{eq:reduced_period_two}
\end{equation}
Every solution of the left equation satisfies $\|X\|_\infty<1/2$; the equivalence is understood on this open cube.
\end{theorem}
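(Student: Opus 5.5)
The proof has two directions, and we will use Lemma~\ref{lem:period_two_invariance} to reduce everything to a single spatial point. \emph{Forward direction:} given a nontrivial period-two orbit $P_1\leftrightarrow P_2$, the lemma gives $W_1=W_2=|AX_1|$ and $X_2=-X_1$. We substitute these into the spatial update $X_2=G_1(X_1,W_1)$. The memory update at $P_1$ produces $G_2(X_1,W_1)=\sqrt[\odot]{\beta|AX_1|^{\odot2}+(1-\beta)(AX_1)^{\odot2}}=|AX_1|$. The spatial equation therefore reads $-X_1=X_1-AX_1\oslash(|AX_1|+c\mathbf1)$, which is the left equation of~\eqref{eq:reduced_period_two} with $X=X_1$. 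Here $X_1\neq0$: otherwise $W_1=0$ and the orbit is the trivial fixed point. Replacing $P_1$ by $P_2$ gives $-X_1$, which also solves the equation because the equation is odd. So the orbit determines a well-defined class modulo $X\sim-X$.

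\emph{Reverse direction:} given a nonzero solution $X$, we set $W=|AX|$ and check directly that $G(X,W)=(-X,W)$. The memory component is again $|AX|$, and the spatial component is $X-AX\oslash(|AX|+c\mathbf1)=X-2X=-X$. Since $|A(-X)|=|AX|$, the same computation gives $G(-X,W)=(X,W)$. The orbit is genuinely period two and not a fixed point, because $X\neq-X$ when $X\neq0$; this also matches the fact that the origin is the only fixed point. The two constructions are mutually inverse on classes: the orbit $\{(X,|AX|),(-X,|AX|)\}$ is recovered from $\pm X$, and by the lemma every orbit has this form. This proves the bijection.

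\emph{Bound and equivalence:} componentwise, $2|X_i|=|(AX)_i|/(|(AX)_i|+c)<1$ since $c>0$, which gives $\|X\|_\infty<1/2$. On the open cube we then solve for $AX$ coordinate by coordinate. Write $g=(AX)_i$ and $s=X_i$. The equation $2s(|g|+c)=g$ forces $\operatorname{sign}(s)=\operatorname{sign}(g)$ (and $s=0\iff g=0$), so $|g|=2|s|(|g|+c)$. This gives $|g|=2c|s|/(1-2|s|)$, and restoring signs yields $g=2cs/(1-2|s|)$. Conversely, if $g=2cs/(1-2|s|)$ with $|s|<1/2$, then $g$ has the sign of $s$, $|g|+c=c/(1-2|s|)$, and $g/(|g|+c)=2s$. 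This is the right equation.

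The only delicate point is the sign bookkeeping in the equivalence, in particular the claim that $s$ and $g$ share a sign. Everything else is substitution, so I expect no real obstacle beyond being careful there.
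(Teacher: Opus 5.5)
Your proof is correct and takes essentially the same route as the paper. You invoke the memory-invariance and antipodal-symmetry lemma to reduce to the spatial update, verify the converse by direct substitution with $W=|AX|$, and then use the coordinatewise sign-and-rearrangement argument on the open cube $(-1/2,1/2)^d$; you simply spell out a few steps the paper leaves brief, such as the memory update fixing $|AX|$ and the two constructions being mutually inverse on classes modulo $X\sim -X$.
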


\begin{proof}
The preceding lemma gives $P_1=(X,|AX|)$ and $P_2=(-X,|AX|)$, so the spatial update is the left equation in~\eqref{eq:reduced_period_two}. Conversely, any nonzero solution makes the memory update fix $|AX|$ and the spatial update exchange the distinct points $P_1$ and $P_2$. The solutions $X$ and $-X$ describe the same geometric cycle.

Coordinatewise, $2|X_i|=|(AX)_i|/(|(AX)_i|+c)<1$, and $X_i$ and $(AX)_i$ have the same sign. Rearrangement gives $|(AX)_i|=2c|X_i|/(1-2|X_i|)$, hence the right equation. Conversely, within $(-1/2,1/2)^d$ that equation has positive denominators and reverses the same algebra, including coordinates with $X_i=0$.
\end{proof}

\subsection{Orbits near the stability threshold}

\begin{remark*}

To find period-two orbits, we use the functional
\begin{equation}
\Phi_c(X):=\frac12X^\top AX-c\sum_{i=1}^d p(X_i),\qquad p(t):=-\frac12\log(1-2|t|)-|t|,
\label{eq:period_two_existence_potential}
\end{equation}

We define it on $(-1/2,1/2)^d$. To find its critical points, we solve
\begin{equation}\label{eq:period-two-potential-gradient}
\nabla\Phi_c(X)=AX-2cX\oslash(\mathbf1-2|X|)=0.
\end{equation}

By Theorem~\ref{thm:period_two_reduction}, we obtain a period-two orbit from any nonzero critical point. To establish existence, it is therefore enough to find a nonzero maximizer of $\Phi_c$.
\end{remark*}

Since $\Phi_c$ tends to $-\infty$ at the boundary of its domain, it attains a maximum in the interior. For $h>h_*$, we find points near the origin where $\Phi_c$ is positive. We therefore obtain a positive maximum, attained away from the origin because $\Phi_c(0)=0$.

\begin{theorem}[Existence threshold]\label{thm:period_two_existence}
A nontrivial period-two orbit exists if and only if $h > h_* := 2\epsilon/L_A$ (equivalently, $0 < c < L_A/2$).
\end{theorem}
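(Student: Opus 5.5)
We prove the two implications separately. Both rest on the reduction in Theorem~\ref{thm:period_two_reduction}: nontrivial period-two orbits correspond to nonzero $X\in(-1/2,1/2)^d$ with $AX = 2cX\oslash(\mathbf1-2|X|)$, which are exactly the nonzero critical points of $\Phi_c$ in~\eqref{eq:period_two_existence_potential}.

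\emph{Existence for $c<L_A/2$.} We follow the variational strategy in the preceding remark. First, $\Phi_c$ is continuous on the open cube. Since $p(t)=-\tfrac12\log(1-2|t|)-|t|\to+\infty$ as $|t|\to1/2$ and the quadratic term is bounded on the cube, we get $\Phi_c\to-\infty$ at the boundary. Hence $\Phi_c$ attains its maximum in the interior. Next, we expand $p$ near $0$: $-\tfrac12\log(1-2s)=s+s^2+\tfrac43 s^3+\dots$, so $p(t)=t^2+O(|t|^3)$. Along $X=te_{\max}$ this gives
\[
\Phi_c(te_{\max})=\tfrac12 L_A t^2 - c\,t^2\|e_{\max}\|_2^2+O(t^3)=(\tfrac{L_A}{2}-c)t^2+O(t^3)>0
\]
for small $t\neq0$. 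Thus the maximum is positive and is attained at some $X\neq0$, since $\Phi_c(0)=0$. It remains to check that this maximizer is a critical point. The function $p$ is $C^1$ at $t=0$ because $p'(t)=\operatorname{sgn}(t)\bigl(1/(1-2|t|)-1\bigr)\to0$; in fact $p'(t)=2t/(1-2|t|)$ is continuous everywhere on $(-1/2,1/2)$. So $\Phi_c$ is $C^1$, and its gradient at the maximizer is~\eqref{eq:period-two-potential-gradient}, which vanishes. Theorem~\ref{thm:period_two_reduction} then produces the orbit.

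\emph{Nonexistence for $c\ge L_A/2$.} Suppose $X\neq0$ solves the reduced equation. Take the inner product of both sides with $X$:
\[
X^\top AX=2c\sum_i \frac{X_i^2}{1-2|X_i|}.
\]
Because $1-2|X_i|\in(0,1]$, with equality only when $X_i=0$, the right-hand side is at least $2c\|X\|_2^2$. The inequality is strict, since some $X_i\neq0$ has $1/(1-2|X_i|)>1$ and $c>0$. Combining this with $X^\top AX\le L_A\|X\|_2^2$ gives $L_A\|X\|_2^2>2c\|X\|_2^2$, that is, $c<L_A/2$. This contradicts $c\ge L_A/2$. Note that the case $c=L_A/2$ (that is, $h=h_*$) is excluded precisely by this strictness. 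Finally, $c<L_A/2$ is equivalent to $h>h_*$ because $c=\epsilon/h$.

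The only delicate points are the $C^1$ regularity of $\Phi_c$ at coordinate hyperplanes, which holds since $p'$ is continuous, and the strictness at the threshold. Neither should pose a real obstacle.
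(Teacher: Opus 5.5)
Your proof is correct and follows essentially the same route as the paper: for necessity, the inner product of the reduced equation with $X$ together with the strict inequality $1/(1-2|X_i|)>1$; for sufficiency, maximization of $\Phi_c$ with the second-order expansion along $e_{\max}$. You also check that $p'(t)=2t/(1-2|t|)$ is continuous, so the interior maximizer really is a critical point, which fills in a step the paper leaves implicit.
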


\begin{proof}
If $X\neq 0$ solves \eqref{eq:reduced_period_two}, then $X^\top AX = 2c\sum_{i} X_i^2/(1-2|X_i|) > 2c\|X\|_2^2$. Since $X^\top AX \le L_A\|X\|_2^2$, necessity ($2c < L_A$) follows immediately.

For sufficiency, use $\Phi_c$ from~\eqref{eq:period_two_existence_potential}. Since $p(t)\to\infty$ as $|t|\uparrow1/2$, while $X^\top AX$ is bounded on the cube, $\Phi_c\to-\infty$ at the boundary and attains a global maximum in the interior. The expansion $p(t)=t^2+\frac43|t|^3+O(t^4)$ determines its sign near the origin. For a unit eigenvector $e_{\max}$ associated with the largest eigenvalue $L_A$, so that $Ae_{\max}=L_Ae_{\max}$, expanding along $te_{\max}$ gives $\Phi_c(te_{\max}) = (\frac{L_A}{2}-c)t^2 - \frac{4c}{3}|t|^3\sum_i |e_{\max,i}|^3 + O(t^4)$. When $c < L_A/2$, $\Phi_c(te_{\max}) > 0 = \Phi_c(0)$ for small $t\neq 0$, so the maximum is positive and every global maximizer is nonzero. Such a maximizer is an interior critical point and therefore gives a period-two orbit by Theorem~\ref{thm:period_two_reduction}.
\end{proof}

\begin{remark*}

To find period-two orbits numerically, we maximize $\Phi_c$ in~\eqref{eq:period_two_existence_potential} and use~\eqref{eq:reduced_period_two} to recover the orbit, following the construction in Theorem~\ref{thm:period_two_existence}. We apply this method for $h>h_*$ close to the threshold in Section~\ref{sec:period_two_variational_experiment}.
\end{remark*}

\begin{theorem}[Collision with the origin]\label{thm:period_two_collision}
Every nonzero solution $X$ of \eqref{eq:reduced_period_two} satisfies $\|X\|_\infty \le \frac{d(L_A-2c)}{4c} = \frac{d}{2}(\frac{h L_A}{2\epsilon}-1)$. Thus, as $h \downarrow h_*$, every sequence of period-two orbits collapses to $(0,0)$.
\end{theorem}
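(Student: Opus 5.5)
The plan is to test the reduced equation \eqref{eq:reduced_period_two} against $X$ itself, exactly as in the necessity half of Theorem~\ref{thm:period_two_existence}, but this time keeping the cubic correction instead of discarding it. Let $X\neq0$ solve $AX=2cX\oslash(\mathbf1-2|X|)$. By Theorem~\ref{thm:period_two_reduction}, $\|X\|_\infty<1/2$, so every denominator is positive. Taking the inner product with $X$ gives
\[
X^\top AX=2c\sum_i\frac{X_i^2}{1-2|X_i|}.
\]

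The key elementary step is the identity
\[
\frac{t^2}{1-2|t|}=t^2+\frac{2|t|^3}{1-2|t|}\qquad\text{for }|t|<\tfrac12 .
\]
Since $1-2|t|\in(0,1]$, the second term is at least $2|t|^3$, which yields $\frac{t^2}{1-2|t|}\ge t^2+2|t|^3$. Combining this with $X^\top AX\le L_A\|X\|_2^2$ gives
\[
4c\sum_i|X_i|^3\le (L_A-2c)\|X\|_2^2 .
\]
In particular $L_A-2c>0$, which recovers the necessity direction of Theorem~\ref{thm:period_two_existence}.

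Next I convert this to the $\infty$-norm using two crude bounds. On the left I use $\sum_i|X_i|^3\ge\|X\|_\infty^3$, and on the right I use $\|X\|_2^2\le d\|X\|_\infty^2$. This gives $4c\|X\|_\infty^3\le d(L_A-2c)\|X\|_\infty^2$. Since $X\neq0$, dividing by $\|X\|_\infty^2$ gives
\[
\|X\|_\infty\le \frac{d(L_A-2c)}{4c}.
\]
Substituting $c=\epsilon/h$ and $L_A=2\epsilon/h_*$, one has $(L_A-2c)/(4c)=\tfrac12\bigl(hL_A/(2\epsilon)-1\bigr)$, which is the stated second form of the bound.

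For the collapse statement, take any sequence of period-two orbits with $h\downarrow h_*$. Then $c\uparrow L_A/2$, and the bound forces the spatial point $X\to0$. By Lemma~\ref{lem:period_two_invariance}, the memory coordinate satisfies $W=|AX|\le L_A\|X\|_2\to0$. Returning to the original coordinates through the conjugacy $(x,v)=(hX,h^2W^{\odot2})$, with $h$ bounded near $h_*$, gives $x=O(h-h_*)$ and $v=O((h-h_*)^2)$, so both points of each orbit converge to $(0,0)$.

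I do not expect a real obstacle here. The only point needing care is to keep the cubic term rather than discard it, as was done in the existence proof. The price of the crude comparison $\|X\|_2^2\le d\|X\|_\infty^2$ is the factor $d$ in the bound, which I accept since it matches the statement.
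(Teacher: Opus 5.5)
Your proof is correct and follows the paper's argument: you take the inner product of \eqref{eq:reduced_period_two} with $X$, isolate the cubic excess over $2c\|X\|_2^2$, and apply $\sum_i|X_i|^3\ge\|X\|_\infty^3$ and $\|X\|_2^2\le d\|X\|_\infty^2$. The differences are cosmetic: the paper keeps the factor $(1-2|X_i|)^{-1}$ until the final lower bound, whereas you drop it earlier, and you also spell out the $x=O(h-h_*)$, $v=O((h-h_*)^2)$ rates in original coordinates, which the paper only states in Section~\ref{sec:normalized_coordinates}.
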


\begin{proof}
Taking the inner product of \eqref{eq:reduced_period_two} with $X$ gives $4c\sum_i |X_i|^3/(1-2|X_i|) \le (L_A-2c)\|X\|_2^2$. For $r = \|X\|_\infty$, bounding the sum below by $r^3$ and $\|X\|_2^2 \le dr^2$ yields $4cr^3 \le d(L_A-2c)r^2$, proving the bound. As $c\uparrow L_A/2$, $X\to 0$ and $W=|AX|\to 0$.
\end{proof}

\begin{remark*}

For any sequence of period-two orbits with learning rates decreasing to $h_*$, we obtain convergence to the origin from Theorem~\ref{thm:period_two_collision}. Whether an individual branch can be continued all the way to $h_*$ is a separate question.
\end{remark*}

Under the additional generic assumptions that $L_A$ is simple and every coordinate of $e_{\max}$ is nonzero, we also show that the period-two branch emerging from the origin is locally exponentially asymptotically stable for $h>h_*$ sufficiently close to $h_*$. So far, we have established existence at each parameter without constructing a branch. We first construct a local branch approaching the origin, then use the Hessian of $\Phi_c$ to establish local uniqueness. Finally, we compute the Floquet multipliers to prove attraction.

\begin{lemma}[Construction of the period-two branch]\label{lem:near_threshold_period_two_branch}
Let $A=A^\top\succ0$, let $L_A=\lambda_{\max}(A)$ be simple, and let its unit eigenvector $e=e_{\max}$ satisfy $e_i\neq0$ for every $i$. Fix $\epsilon>0$ and $\beta\in(0,1)$, and set
\begin{equation*}
c=\epsilon/h,\qquad c_*=L_A/2,\qquad h_*=2\epsilon/L_A,\qquad
\delta=L_A-2c,\qquad \sigma_3=\sum_i|e_i|^3.
\end{equation*}
For $c<c_*$ sufficiently close to $c_*$, equivalently for $h>h_*$ sufficiently close to $h_*$, there is a smooth nontrivial period-two branch
\begin{equation}\label{eq:near_threshold_orbit}
P_1(c)=(X(c),W(c)),\qquad P_2(c)=(-X(c),W(c)),\qquad W(c)=|AX(c)|,
\end{equation}
with
\begin{equation*}
X(c)=\frac{\delta e}{2L_A\sigma_3}+O(\delta^2),\qquad
W(c)=\frac{\delta|e|}{2\sigma_3}+O(\delta^2).
\end{equation*}
Moreover, $W(c)>0$ and $\operatorname{sign}X(c)=\operatorname{sign}(AX(c))=\operatorname{sign}e$. Here $\epsilon$ remains fixed: $c\uparrow c_*$ corresponds to $h\downarrow h_*$ from above.
\end{lemma}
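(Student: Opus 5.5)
We construct the branch as a bifurcating solution of the reduced equation~\eqref{eq:reduced_period_two}, restricted to the open orthant $\{\operatorname{sign}X=\operatorname{sign}e\}$. On this orthant $|X_i|=s_iX_i$ with $s_i=\operatorname{sign}e_i$ fixed, so the equation becomes smooth:
\[
\mathcal F(X,c):=AX-2cX\oslash(\mathbf1-2SX)=0,\qquad S=\operatorname{diag}(s_i).
\]
Near the origin we expand $2cX\oslash(\mathbf1-2SX)=2cX+4cX\odot SX+O(\|X\|^3)$. We then perform a Lyapunov--Schmidt reduction along the simple eigenvalue $L_A$, writing
\[
X=t(e+y),\qquad y\perp e,\qquad t>0.
\]

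Dividing by $t$ gives a smooth map
\[
\mathcal G(t,y,c)=(A-2cI)(e+y)-4ct\,(e+y)\odot S(e+y)+O(t^2).
\]
The complementary equation is the projection onto $e^\perp$. Its $y$-derivative at $(t,y,c)=(0,0,c_*)$ is $(A-L_AI)|_{e^\perp}$, which is invertible because $L_A$ is simple. By the implicit function theorem we obtain a smooth $y=y(t,c)$ with $y(0,c)=0$ and $y=O(t)$.

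The bifurcation equation is the projection onto $e$:
\[
(L_A-2c)-4ct\,\langle e, e\odot Se\rangle+O(t^2)+O(t\,\|y\|)=0.
\]
Here $\langle e,e\odot Se\rangle=\sum_i e_i^2|e_i|=\sigma_3>0$. Since $y=O(t)$, this reads $\delta-4ct\sigma_3+O(t^2)=0$. The $t$-derivative at $(0,c_*)$ is $-2L_A\sigma_3\ne0$, so a second application of the implicit function theorem in the variables $(t,\delta)$ gives a smooth solution
\[
t(\delta)=\frac{\delta}{4c_*\sigma_3}+O(\delta^2)=\frac{\delta}{2L_A\sigma_3}+O(\delta^2).
\]
For $\delta>0$ this solution is positive. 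This yields $X(c)=t(e+y)=\tfrac{\delta e}{2L_A\sigma_3}+O(\delta^2)$.

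Next we verify consistency. Because every $e_i\ne0$, the leading term dominates the $O(\delta^2)$ error for small $\delta$, so $\operatorname{sign}X(c)=\operatorname{sign}e$. The solution therefore lies in the chosen orthant, where $\mathcal F=0$ coincides with~\eqref{eq:reduced_period_two}. The same argument applied to $AX=L_At e+O(t^2)$ gives $\operatorname{sign}(AX)=\operatorname{sign}e$, and hence $W=|AX|=\tfrac{\delta|e|}{2\sigma_3}+O(\delta^2)>0$ componentwise. Theorem~\ref{thm:period_two_reduction} together with Lemma~\ref{lem:period_two_invariance} then converts this solution into the period-two orbit~\eqref{eq:near_threshold_orbit}. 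The branch is smooth in $c$ on $(c_*-\delta_0,c_*)$, being a composition of smooth maps.

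The main obstacle is the non-smoothness of $|X|$ at coordinate hyperplanes. The orthant restriction handles it, but only because the nonvanishing of every $e_i$ keeps the solution strictly inside the orthant; this is exactly where the generic hypothesis enters. A secondary point is the quadratic (rather than linear) term in the bifurcation equation, which is what produces the $|e|$-weighted coefficient $\sigma_3$. We should check that the expansion of $(1-2s_iX_i)^{-1}$ gives the coefficient $4c$, consistent with $p(t)=t^2+\tfrac43|t|^3+\cdots$ in Theorem~\ref{thm:period_two_existence}.
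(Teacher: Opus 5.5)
Your proof is correct and follows essentially the paper's route: the same ansatz $X=t(e+z)$ with $z\perp e$ and division by $t$, the same fixing of $|X|$ by the sign pattern of $e$, and the same invertibility of $(A-L_AI)|_{e^\perp}$, which leads to the coefficient $\sigma_3$. The only difference is that you split the implicit-function step Lyapunov--Schmidt style (first $y(t,c)$, then a scalar equation giving $t(\delta)$). The paper instead solves for $(z,c)$ jointly as functions of $t$ via $\mathcal L(u,s)=(A-L_AI)u-2se$, and then inverts $c(t)$ using $c'(0)=-L_A\sigma_3$.
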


\begin{proof}
Use the parametrization
\begin{equation}\label{eq:near_threshold_parametrization}
X=t(e+z),\qquad z\in e^\perp,\qquad t>0.
\end{equation}
Substituting~\eqref{eq:near_threshold_parametrization} into~\eqref{eq:reduced_period_two} and dividing by $t$ gives
\begin{equation}\label{eq:near_threshold_residual}
\Psi(t,z,c):=A(e+z)-2c(e+z)\oslash(\mathbf1-2t|e+z|)=0.
\end{equation}
Every coordinate of $e$ is nonzero. Thus, in the neighborhood
\begin{equation*}
\|z\|_\infty<\tfrac12\min_i|e_i|,\qquad
\operatorname{sign}(e_i+z_i)=\operatorname{sign}e_i,\qquad |e+z|=\operatorname{diag}(\operatorname{sign}e)(e+z).
\end{equation*}
The absolute value is therefore a fixed linear map here, so it does not obstruct smoothness. For small $|t|$, the denominators remain positive; hence $\Psi$ extends smoothly through $(0,0,c_*)$.

To solve for $(z,c)$ as functions of $t$, the implicit-function theorem requires $D_{(z,c)}\Psi(0,0,c_*)$ to be invertible:
\begin{equation*}
\begin{gathered}
\Psi(0,0,c_*)=0, \quad
\mathcal L:=D_{(z,c)}\Psi(0,0,c_*):e^\perp\times\mathbb R\to\mathbb R^d.
\end{gathered}
\end{equation*}
Here the total derivative in $(z,c)$, with $t$ held fixed, is
\begin{equation}\label{eq:near_threshold_total_derivative}
D_{(z,c)}\Psi(t,z,c)(u,s)=D_z\Psi(t,z,c)u+\partial_c\Psi(t,z,c)s
=\left.\frac{d}{d\tau}\Psi(t,z+\tau u,c+\tau s)\right|_{\tau=0}.
\end{equation}
At $t=0$ the denominator in~\eqref{eq:near_threshold_residual} is $\mathbf1$, so~\eqref{eq:near_threshold_total_derivative} gives
\begin{equation}\label{eq:near_threshold_linearization}
\mathcal L(u,s)=\left.\frac{d}{d\tau}\bigl[A(e+\tau u)-2(c_*+\tau s)(e+\tau u)\bigr]\right|_{\tau=0}
=Au-2c_*u-2se=(A-L_AI)u-2se.
\end{equation}
Since $A$ is symmetric and $L_A$ is simple, $(A-L_AI)|_{e^\perp}$ is invertible. For any $y\in\mathbb R^d$, we have the unique orthogonal decomposition
\begin{equation}\label{eq:near_threshold_orthogonal_decomposition}
y=\underbrace{(e^\top y)e}_{\in\operatorname{span}\{e\}}
+\underbrace{(I-ee^\top)y}_{\in e^\perp}.
\end{equation}
Matching the components in~\eqref{eq:near_threshold_orthogonal_decomposition} with those in~\eqref{eq:near_threshold_linearization} gives
\begin{equation*}
\mathcal L(u,s)=y
\quad\Longleftrightarrow\quad
s=-\tfrac12e^\top y,\qquad
u=\bigl[(A-L_AI)|_{e^\perp}\bigr]^{-1}(I-ee^\top)y.
\end{equation*}
Thus the implicit-function theorem~\cite[Chapter~3]{krantzparks2013implicit} gives smooth $z(t),c(t)$ with $z(0)=0$, $c(0)=c_*$. Differentiating~\eqref{eq:near_threshold_residual} along $(t,z(t),c(t))$ at $t=0$ and then multiplying by $e^\top$ yields
\begin{equation}\label{eq:near_threshold_branch_derivative}
(A-L_AI)z'(0)-2c'(0)e-2L_Ae\odot|e|=0
\quad\Longrightarrow\quad
-2c'(0)-2L_A\sum_i|e_i|^3=0.
\end{equation}
By~\eqref{eq:near_threshold_branch_derivative}, $c'(0)=-L_A\sigma_3<0$, so $c(t)$ has a smooth local inverse. Since $c(0)=L_A/2$, Taylor expansion gives
\begin{equation}\label{eq:near_threshold_c_expansion}
c(t)=\frac{L_A}{2}-L_A\sigma_3t+O(t^2),\qquad
\delta(c(t))=L_A-2c(t)=2L_A\sigma_3t+O(t^2).
\end{equation}
Inverting~\eqref{eq:near_threshold_c_expansion} and substituting into~\eqref{eq:near_threshold_parametrization} and~\eqref{eq:near_threshold_orbit}, using $z(t)=O(t)$ and $Ae=L_Ae$, yields
\begin{align}
t&=\frac{\delta}{2L_A\sigma_3}+O(\delta^2),\notag
\\
X(c)&=t(e+z(t))=te+O(t^2)=\frac{\delta e}{2L_A\sigma_3}+O(\delta^2),\label{eq:near_threshold_X_expansion}\\
W(c)&=|AX(c)|=\left|\frac{\delta e}{2\sigma_3}+O(\delta^2)\right|=\frac{\delta|e|}{2\sigma_3}+O(\delta^2).\label{eq:near_threshold_W_expansion}
\end{align}
This constructs the branch for $c<c_*$, equivalently $h>h_*$ with $\epsilon$ fixed. Since every component of $e$ is nonzero, the leading terms give $W(c)>0$ and $\operatorname{sign}X(c)=\operatorname{sign}(AX(c))=\operatorname{sign}e$ for sufficiently small $\delta>0$. Equation~\eqref{eq:reduced_period_two} then shows that $G$ exchanges the distinct points $P_1(c)$ and $P_2(c)$.
\end{proof}

We next use the Hessian of $\Phi_c$ to prove local uniqueness of the period-two orbit at each fixed parameter, which is a necessary condition for hyperbolicity.

\begin{lemma}[Nondegeneracy of the period-two branch]\label{lem:near_threshold_period_two_nondegeneracy}
Under the assumptions and notation of Lemma~\ref{lem:near_threshold_period_two_branch}, the critical points $\pm X(c)$ are nondegenerate strict local maximizers of $\Phi_c$ from Equation~\eqref{eq:period_two_existence_potential} for sufficiently small $\delta>0$. Their Hessians are negative definite, and the eigenvalue approaching zero satisfies
\begin{equation*}
\lambda(\delta)=-\delta+O(\delta^2).
\end{equation*}
\end{lemma}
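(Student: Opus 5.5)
Along the branch of Lemma~\ref{lem:near_threshold_period_two_branch}, every coordinate of $X(c)$ is nonzero with sign $\operatorname{sign}e_i$, and $|X_i|<1/2$. So $\Phi_c$ is smooth near $\pm X(c)$, and its Hessian can be computed directly from \eqref{eq:period-two-potential-gradient}. On the region where $\operatorname{sign}X_i$ is fixed, the map $t\mapsto 2ct/(1-2|t|)$ has derivative $2c/(1-2|t|)^2$. This gives
\[
\nabla^2\Phi_c(X)=A-2c\,\operatorname{diag}\bigl((1-2|X_i|)^{-2}\bigr)=:A-2cD(X).
\]
By the symmetry $\Phi_c(-X)=\Phi_c(X)$, the same statements at $-X(c)$ follow from those at $X(c)$. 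We then parametrize by $t$ as in the branch construction. Then $X=t(e+z(t))$ with $z(t)=O(t)$ and $c=c(t)$ satisfying \eqref{eq:near_threshold_c_expansion}, so $\delta=2L_A\sigma_3t+O(t^2)$. It therefore suffices to study the Hessian $H(t):=A-2c(t)D(X(t))$ as a smooth symmetric matrix function of $t$ with $H(0)=A-L_AI$.

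The key step is eigenvalue perturbation theory. Since $L_A$ is simple, $H(0)$ has a simple eigenvalue $0$ with eigenvector $e$. All its other eigenvalues are $\lambda_j-L_A\le-\mathrm{gap}<0$. By continuity, for small $t$ all eigenvalues but one stay below $-\mathrm{gap}/2$, and the remaining one $\lambda(t)$ is smooth with $\lambda'(0)=e^\top H'(0)e$ by Hellmann--Feynman. We compute $H'(0)$. First, $\frac{d}{dt}(1-2t|e_i+z_i|)^{-2}\big|_{t=0}=4|e_i|$. Second, $\frac{d}{dt}[2c(t)]=2c'(0)=-2L_A\sigma_3$. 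Hence
\[
H'(0)=2L_A\sigma_3 I-2c_*\cdot 4\,\operatorname{diag}(|e|)=2L_A\sigma_3I-4L_A\operatorname{diag}(|e|),
\]
and
\[
e^\top H'(0)e=2L_A\sigma_3-4L_A\sum_i|e_i|^3=-2L_A\sigma_3.
\]
Therefore $\lambda(t)=-2L_A\sigma_3t+O(t^2)$. Substituting $t=\delta/(2L_A\sigma_3)+O(\delta^2)$ yields $\lambda=-\delta+O(\delta^2)$.

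It follows that for small $\delta>0$ the Hessian is negative definite. Since $\nabla\Phi_c(\pm X(c))=0$ by construction, the points $\pm X(c)$ are nondegenerate strict local maximizers by the second-derivative test.

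The main obstacle is bookkeeping rather than any real difficulty. One must keep the $O(t)$ correction $z(t)$ and the variation of $c$ in the derivative of $H$. The $z$ term enters $D$ only at order $t^2$, since $|e+z|$ multiplies $t$, so it drops out of $H'(0)$. The $c'(0)$ term, by contrast, contributes at first order and exactly produces the sign flip from $+2L_A\sigma_3$ to $-2L_A\sigma_3$. I would double-check this cancellation carefully, since without the $c'(0)$ contribution the leading coefficient would come out with the wrong magnitude.
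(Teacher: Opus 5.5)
Your proof is correct and follows the paper's argument. The paper also writes the Hessian as $A-L_AI$ plus a first-order perturbation and applies simple-eigenvalue perturbation at $e$, obtaining $e^\top Be=-1$ with $B=I-(2/\sigma_3)\operatorname{diag}(|e|)$; the only difference is that it expands directly in $\delta$ using $2c=L_A-\delta$, whereas you differentiate in $t$ and convert via $\delta=2L_A\sigma_3t+O(t^2)$, which is equivalent since your $H'(0)$ equals $2L_A\sigma_3B$.
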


\begin{proof}
Recall that $\delta = L_A - 2c$. Now let $H_*=A-L_AI$ and $B=I-(2/\sigma_3)\operatorname{diag}(|e|)$. The expansion~\eqref{eq:near_threshold_X_expansion} from Lemma~\ref{lem:near_threshold_period_two_branch} gives
\begin{equation}\label{eq:near_threshold_hessian_denominator}
(1-2|X_i(c)|)^{-2}=1+4|X_i(c)|+O(\delta^2)
=1+\frac{2\delta|e_i|}{L_A\sigma_3}+O(\delta^2).
\end{equation}
Substituting~\eqref{eq:near_threshold_hessian_denominator} and $2c=L_A-\delta$ into the Hessian of $\Phi_c$ from~\eqref{eq:period_two_existence_potential} yields
\begin{equation}\label{eq:near_threshold_hessian}
\mathcal H(\delta):=D^2\Phi_c(X(c))
=A-2c\operatorname{diag}((1-2|X_i(c)|)^{-2})
=H_*+\delta B+O(\delta^2).
\end{equation}
The matrix $H_*$ has a simple zero eigenvalue with eigenvector $e$, and its other eigenvalues are negative. To determine how the zero eigenvalue changes, choose its smooth eigenpair:
\begin{equation}\label{eq:near_threshold_hessian_eigenpair}
\mathcal H(\delta)\xi(\delta)=\lambda(\delta)\xi(\delta),\qquad
\xi(0)=e,\quad\lambda(0)=0.
\end{equation}
Differentiating~\eqref{eq:near_threshold_hessian_eigenpair} with respect to $\delta$ and evaluating at zero gives the product-rule identity
\begin{equation*}
\mathcal H'(0)\xi(0)+\mathcal H(0)\xi'(0)
=\lambda'(0)\xi(0)+\lambda(0)\xi'(0).
\end{equation*}
Substitute $\mathcal H'(0)=B$, $\mathcal H(0)=H_*$ from~\eqref{eq:near_threshold_hessian}, and $\xi(0)=e$, $\lambda(0)=0$ from~\eqref{eq:near_threshold_hessian_eigenpair}:
\begin{equation}\label{eq:near_threshold_eigenpair_derivative}
Be+H_*\xi'(0)=\lambda'(0)e.
\end{equation}
Multiplying~\eqref{eq:near_threshold_eigenpair_derivative} by $e^\top$ eliminates $\xi'(0)$ because $e^\top H_*=0$. Since $e^\top e=1$ and $\sigma_3=\sum_i|e_i|^3$,
\begin{equation*}
\lambda'(0)=e^\top Be
=\sum_i e_i^2-\frac{2}{\sigma_3}\sum_i e_i^2|e_i|=1-2=-1.
\end{equation*}
Hence $\lambda(\delta)=-\delta+O(\delta^2)<0$; the other eigenvalues stay negative by continuity. Both critical points $\pm X(c)$ are therefore nondegenerate strict local maximizers, using the evenness of $\Phi_c$.

\end{proof}

\begin{theorem}[Stability of the period-two branch for \texorpdfstring{$h\gtrsim h_*$}{}]\label{thm:near_threshold_period_two_stability}
Under the assumptions of Lemma~\ref{lem:near_threshold_period_two_branch}, the period-two branch $\{P_1(c),P_2(c)\}$ is locally exponentially asymptotically stable for $h>h_*$ sufficiently close to $h_*$. The critical Floquet multiplier of $G^2$ satisfies
\begin{equation*}
\mu_{\mathrm{crit}}(c)=1-\frac{4(L_A-2c)}{L_A}+O((L_A-2c)^2),
\end{equation*}
and all other Floquet multipliers remain strictly inside the unit disk as $h\downarrow h_*$. The corresponding critical points $\pm X(c)$ are the nondegenerate strict local maximizers identified in Lemma~\ref{lem:near_threshold_period_two_nondegeneracy}.
\end{theorem}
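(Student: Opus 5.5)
The plan is to compute the Floquet multipliers of $G^2$ at $P_1(c)$ directly, using the antipodal symmetry to replace the product of two Jacobians by the square of a single matrix. First I would check smoothness. By Lemma~\ref{lem:near_threshold_period_two_branch}, along the branch we have $W(c)=|AX(c)|>0$ and $\operatorname{sign}(AX(c))=\operatorname{sign}e$ with every $e_i\neq0$. Hence on a neighborhood of $P_1,P_2$ the map $G$ is smooth, the square root is evaluated away from zero, and $S:=\operatorname{diag}(\operatorname{sign}AX)$ is locally constant. This removes the non-$C^1$ issue at the origin.

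The key structural observation is that $G$ commutes with the involution $\mathcal R(X,W)=(-X,W)$. Since $P_2=\mathcal R P_1$, we get $G^2=(\mathcal R G)^2$ near $P_1$. Therefore the multipliers of $G^2$ are the squares of the eigenvalues of $M(c):=\mathcal R\,DG(P_1)$. At $P_1$ we have $G_2=W=|AX|$, and I would set $D=\operatorname{diag}(W+c)$. Then $dG_2=\beta\,dW+(1-\beta)SA\,dX$, and
\[
M=\begin{bmatrix}-I+D^{-1}A-(1-\beta)\operatorname{diag}(AX\oslash D^{\odot2})SA & -\beta\operatorname{diag}(AX\oslash D^{\odot2})\\ (1-\beta)SA & \beta I\end{bmatrix}.
\]
As $\delta\to0$ we have $X\to0$ and $D\to c_*I=\tfrac{L_A}{2}I$. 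So $M(c)\to M_0$, which is block lower triangular with diagonal blocks $-I+\tfrac{2}{L_A}A$ and $\beta I$. Its eigenvalues are $2\lambda_j/L_A-1$, which lie in $(-1,1)$ for $\lambda_j<L_A$ because $\lambda_j>0$, together with $\beta$, and a simple eigenvalue $1$ coming from $e$. Here simplicity of $L_A$ is used. By continuity, all multipliers other than the critical one stay strictly inside the unit disk for small $\delta$.

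The main step is the first-order perturbation of the simple eigenvalue $1$ of $M_0$. Its left eigenvector is $\ell=(e,0)$, since the upper-right block of $M_0$ vanishes. Its right eigenvector is $r=(e,w)$ with $w=L_A|e|$, obtained from $(1-\beta)SL_Ae+\beta w=w$. Since $\ell^\top r=1$, the perturbed eigenvalue is $1+\ell^\top(M(c)-M_0)r+O(\delta^2)$.

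To evaluate it, I would insert the expansions of Lemma~\ref{lem:near_threshold_period_two_branch}: $AX=tL_Ae+O(t^2)$, $tL_A\sigma_3=\delta/2+O(\delta^2)$, and $1/c=\tfrac{2}{L_A}(1+\delta/L_A)+O(\delta^2)$. The contributions are as follows.
\begin{itemize}
\item The term $e^\top D^{-1}Ae$ gives $2+\tfrac{2\delta}{L_A}-\tfrac{2\delta}{L_A}=2$ to first order, so the linear corrections cancel.
\item The feedback term $-(1-\beta)e^\top\operatorname{diag}(AX\oslash D^{\odot2})SAe$ gives $-2(1-\beta)\delta/L_A$.
\item The memory coupling $-\beta\,e^\top\operatorname{diag}(AX\oslash D^{\odot2})w$ gives $-2\beta\delta/L_A$.
\end{itemize}
Adding these, the eigenvalue is $1-2\delta/L_A+O(\delta^2)$. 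Squaring gives $\mu_{\mathrm{crit}}=1-4\delta/L_A+O(\delta^2)$, which is less than $1$ for small $\delta>0$. This yields hyperbolic attraction and hence local exponential asymptotic stability of the cycle.

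I expect this bookkeeping to be the main obstacle. The $O(\delta)$ contributions cancel partially, and the $\beta$-dependence drops out only after both the spatial feedback and the memory coupling are included. Any error in $c$ versus $c_*$, or in the factor $\sigma_3$, would spoil the constant $4/L_A$. As a consistency check, I would compare the sign with Lemma~\ref{lem:near_threshold_period_two_nondegeneracy}, where the Hessian eigenvalue is $-\delta$: both indicate that the critical direction becomes attracting at rate proportional to $\delta$. The final claim of the theorem, that $\pm X(c)$ are nondegenerate strict local maximizers, then follows by citing that lemma.
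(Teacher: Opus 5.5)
Your proposal is correct and follows the paper's proof essentially step for step. The paper likewise uses the involution $T(X,W)=(-X,W)$ to write $D(G^2)(P_1)=K(c)^2$ with $K(c)=T\,DG(P_1(c))$, passes to the same limit $K_*$ with left and right eigenvectors $(e,0)$ and $(e,L_A|e|)$, and applies first-order perturbation of the simple eigenvalue to obtain $1-2\delta/L_A$; the only difference is that the paper merges your three contributions into the exact identity $\ell^\top(K(c)-K_*)r=\frac{L_A}{c}\sum_i e_i^2(1-2|X_i|)^2-2$ via $W_i+c=c/(1-2|X_i|)$, which makes the $\beta$-independence exact rather than first-order, whereas you expand each term separately to the same result.
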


\begin{proof}
Use the branch and notation from Lemma~\ref{lem:near_threshold_period_two_branch}, with $c$ fixed along each orbit. Define the linear symmetry
\begin{equation*}
T(X,W)=(-X,W),\qquad DT=T=\begin{bmatrix}-I_d&0\\0&I_d\end{bmatrix},\qquad T^2=I_{2d}.
\end{equation*}
The map satisfies $G\circ T=T\circ G$, and $P_2(c)=TP_1(c)=G(P_1(c))$. Differentiating the symmetry identity at $P_1(c)$ gives
\begin{equation*}
DG(P_2(c))\,T=T\,DG(P_1(c)),\qquad
DG(P_2(c))=T\,DG(P_1(c))\,T.
\end{equation*}
Thus, by the chain rule, the derivative of the second iterate is
\begin{equation*}
\begin{aligned}
D(G^2)(P_1(c))&=DG(P_2(c))\,DG(P_1(c))\\
&=T\,DG(P_1(c))\,T\,DG(P_1(c))=K(c)^2,
\end{aligned}
\qquad K(c):=T\,DG(P_1(c)).
\end{equation*}

The eigenvalues of $K(c)^2$ are the squares of the eigenvalues of $K(c)$. To see this, for any square matrix $K$, the factorization
\begin{equation*}
\det(K^2-\kappa^2I)=\det(K-\kappa I)\det(K+\kappa I)
\quad\Longrightarrow\quad
\operatorname{spec}(K^2)=\{\kappa^2:\kappa\in\operatorname{spec}(K)\}
\end{equation*}
holds over $\mathbb C$, since every complex number has a square root. Hence the Floquet multipliers are the squares of the eigenvalues of $K(c)$, including when $K(c)$ has nontrivial Jordan blocks.

We compute $DG(X,W)$ explicitly from~\eqref{eq:normalized-rmsprop-drift-lemma}. For a $G_2(X,W)$ with every component positive, set
\begin{equation*}
q=AX,\qquad D=\operatorname{diag}(G_2(X,W)),\qquad
\zeta=\operatorname{diag}(G_2(X,W)+c\mathbf1)=D+cI.
\end{equation*}
Differentiating the square-root memory update and then $G_1=X-q\oslash(G_2+c\mathbf1)$ gives
\begin{align}
D_XG_2&=(1-\beta)D^{-1}\operatorname{diag}(q)A,&
D_WG_2&=\beta D^{-1}\operatorname{diag}(W),\label{eq:near_threshold_general_memory_derivatives}\\
D_XG_1&=I-\zeta^{-1}A+\zeta^{-2}\operatorname{diag}(q)D_XG_2,&
D_WG_1&=\zeta^{-2}\operatorname{diag}(q)D_WG_2.\label{eq:near_threshold_general_spatial_derivatives}
\end{align}
Substituting~\eqref{eq:near_threshold_general_memory_derivatives} into~\eqref{eq:near_threshold_general_spatial_derivatives}, we obtain
\begin{equation}\label{eq:near_threshold_general_jacobian}
DG(X,W)=\begin{bmatrix}
I-\zeta^{-1}A+(1-\beta)\zeta^{-2}D^{-1}\operatorname{diag}(q^{\odot2})A
&\beta\zeta^{-2}D^{-1}\operatorname{diag}(q\odot W)\\
(1-\beta)D^{-1}\operatorname{diag}(q)A&\beta D^{-1}\operatorname{diag}(W)
\end{bmatrix}.
\end{equation}
At $P_1(c)$, put $X=X(c)$, $W=W(c)$, and $\zeta_c=\operatorname{diag}(W+c\mathbf1)$. The branch identities give
\begin{equation}\label{eq:near_threshold_jacobian_at_branch_identities}
G_2(P_1(c))=W=|AX|>0,\qquad
D^{-1}\operatorname{diag}(AX)=\operatorname{diag}(\operatorname{sign}e),\qquad
\zeta_c^{-1}AX=2X.
\end{equation}
Using~\eqref{eq:near_threshold_jacobian_at_branch_identities} in~\eqref{eq:near_threshold_general_jacobian} yields
\begin{equation}\label{eq:near_threshold_orbit_jacobian}
DG(P_1(c))=\begin{bmatrix}
I-\zeta_c^{-1}A+2(1-\beta)\zeta_c^{-1}\operatorname{diag}(X)\operatorname{diag}(\operatorname{sign}e)A&2\beta\zeta_c^{-1}\operatorname{diag}(X)\\
(1-\beta)\operatorname{diag}(\operatorname{sign}e)A&\beta I
\end{bmatrix}.
\end{equation}
Finally, multiplying~\eqref{eq:near_threshold_orbit_jacobian} by $T$ from the left changes the signs of the first block row:
\begin{equation}\label{eq:near_threshold_twisted_jacobian}
K(c)=\begin{bmatrix}
-I+\zeta_c^{-1}A-2(1-\beta)\zeta_c^{-1}\operatorname{diag}(X)\operatorname{diag}(\operatorname{sign}e)A&-2\beta\zeta_c^{-1}\operatorname{diag}(X)\\
(1-\beta)\operatorname{diag}(\operatorname{sign}e)A&\beta I
\end{bmatrix},
\end{equation}
Along the branch, $X(c),W(c)\to0$ and $\zeta_c\to c_*I$ as $c\uparrow c_*=L_A/2$. Consequently, $K(c)$ has the following one-sided limit:
\begin{equation}\label{eq:near_threshold_limiting_jacobian}
K_*:=\lim_{c\uparrow c_*}K(c)=\begin{bmatrix}-I+2A/L_A&0\\(1-\beta)\operatorname{diag}(\operatorname{sign}e)A&\beta I\end{bmatrix}.
\end{equation}
The eigenvalues of $K_*$ in~\eqref{eq:near_threshold_limiting_jacobian} are $\beta$ and $2\lambda_i(A)/L_A-1$. Only the simple eigenvalue $1$ lies on the unit circle; its right and left eigenvectors are
\begin{equation}\label{eq:near_threshold_critical_eigenvectors}
r=\begin{pmatrix}e\\L_A|e|\end{pmatrix},\qquad
\ell=\begin{pmatrix}e\\0\end{pmatrix},\qquad
K_*r=r,\quad\ell^\top K_*=\ell^\top,\quad\ell^\top r=1.
\end{equation}
The reduced equation~\eqref{eq:reduced_period_two} and the branch expansions~\eqref{eq:near_threshold_X_expansion}--\eqref{eq:near_threshold_W_expansion} give
\begin{equation}\label{eq:near_threshold_multiplier_identities}
W_i+c=\frac{c}{1-2|X_i|},\qquad
\sum_i e_i^2|X_i|=\frac{\delta}{2L_A}+O(\delta^2),\qquad
\frac{L_A}{c}=2+\frac{2\delta}{L_A}+O(\delta^2).
\end{equation}
Substituting~\eqref{eq:near_threshold_multiplier_identities} into the first block row of~\eqref{eq:near_threshold_twisted_jacobian}, with the eigenvectors~\eqref{eq:near_threshold_critical_eigenvectors}, yields
\begin{equation}\label{eq:near_threshold_multiplier_projection}
\ell^\top(K(c)-K_*)r
=\frac{L_A}{c}\sum_i e_i^2(1-2|X_i|)^2-2
=\left(2+\frac{2\delta}{L_A}\right)\left(1-\frac{2\delta}{L_A}\right)-2+O(\delta^2)
=-\frac{2\delta}{L_A}+O(\delta^2).
\end{equation}
Since $K(c)-K_*=O(\delta)$, simple-eigenvalue perturbation and~\eqref{eq:near_threshold_multiplier_projection} give
\begin{equation*}
\kappa_{\mathrm{crit}}=1+\ell^\top(K(c)-K_*)r+O(\delta^2)
=1-\frac{2\delta}{L_A}+O(\delta^2),\qquad
\mu_{\mathrm{crit}}=\kappa_{\mathrm{crit}}^2=1-\frac{4\delta}{L_A}+O(\delta^2)\in(0,1).
\end{equation*}
All other Floquet multipliers remain inside the unit disk by continuity, and symmetry gives the same spectrum at $P_2$. By~\eqref{eq:near_threshold_W_expansion}, $W(c)>0$, so $G$ is smooth near both points. The discrete-map stability theorem~\cite[Chapter~4]{elaydi2005difference} therefore proves local exponential asymptotic stability.
\end{proof}

\subsection{Period-two orbits for \texorpdfstring{$h\gg h_*$}{h much greater than h*}}

For large learning rates, we look for period-two orbits whose normalized spatial coordinates are close to the vertices of the cube $[-1/2,1/2]^d$. We call these vertices \emph{corners} and write them as $s/2$, with $s\in\{-1,1\}^d$. Below, we give a condition for finding such an orbit near a given corner. And in fact, for every symmetric positive definite $A$, at least one pair of opposite corners satisfies this condition, so we obtain a period-two orbit for all sufficiently large $h$. The condition need not hold at every corner, and other period-two orbits may also exist.

\begin{definition}[Admissible corner]\label{def:admissible_corner}
A sign vector $s\in\{-1,1\}^d$ is \emph{weakly admissible} for $A$ if $s_i(As)_i\geq0$ for all $i=1,\ldots,d$, and is \emph{admissible} (or \emph{strictly admissible}) for $A$ if $s_i(As)_i>0$ for all $i=1,\ldots,d$ (that is, $As$ shares the componentwise sign pattern of $s$).
\end{definition}

\begin{theorem}[Corner asymptotics]\label{thm:corner_sign_consistency}
If a solution branch satisfies $X(c) \to s/2$ as $c\downarrow 0$, then $s_i(As)_i\geq0$ for all $i$, so $s$ is weakly admissible. Conversely, every strictly admissible sign vector $s$ generates a unique smooth solution branch near $s/2$ with asymptotic expansion
\begin{equation}
X_s(c) = \frac{s}{2} - c\,(s\oslash|As|) + O(c^2) \quad\Longleftrightarrow\quad \widetilde{X}_s(h) = \frac{s}{2} - \frac{\epsilon}{h}\,(s\oslash|As|) + O(h^{-2}) \quad(h\to\infty).\label{eq:corner_branch_expansion}
\end{equation}
\end{theorem}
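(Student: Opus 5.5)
Both directions will be read off the reduced period-two equation of Theorem~\ref{thm:period_two_reduction}, written coordinatewise in the form $2X_i(|(AX)_i|+c)=(AX)_i$. For necessity, suppose $X(c)\to s/2$ as $c\downarrow0$. Then $AX(c)\to As/2$. The proof of Theorem~\ref{thm:period_two_reduction} shows that $X_i$ and $(AX)_i$ always have the same sign, or that $(AX)_i=0$ when $X_i=0$. So $s_i(AX(c))_i\ge0$ for every $c$ near $0$, since $X_i(c)$ has sign $s_i$ there. Letting $c\downarrow0$ gives $s_i(As)_i\ge0$, which is weak admissibility. I will also note that strict inequality cannot be forced: if $(As)_i=0$, the relation $1-2|X_i|=2c|X_i|/|(AX)_i|$ is still consistent as $|(AX)_i|\to0$.

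For the converse, fix a strictly admissible $s$. Near $s/2$, each $X_i$ has sign $s_i$ and each $(AX)_i$ has sign $s_i$, so $|X|=SX$ and $|AX|=SAX$ with $S=\operatorname{diag}(s)$. The absolute values are therefore linear there, as in the proof of Lemma~\ref{lem:near_threshold_period_two_branch}. I will look for solutions of the form $X=s/2-cY$ and use the right-hand form of~\eqref{eq:reduced_period_two}, multiplied through by $\operatorname{diag}(\mathbf 1-2|X|)$. Since $1-2|X_i|=2cs_iY_i$, this gives $2c\,s_iY_i\,(AX)_i=2cX_i$. Dividing by $2c$ produces the regularized system
\[
\Theta(Y,c):=\operatorname{diag}(SY)\,A\Bigl(\tfrac{s}{2}-cY\Bigr)-\Bigl(\tfrac s2-cY\Bigr)=0,
\]
which is polynomial in $(Y,c)$ and so extends smoothly to $c=0$. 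At $c=0$ the equation reads $s_iY_i(As)_i/2=s_i/2$, with the unique solution $Y^0=s\oslash(S As)=s\oslash|As|$, which is well defined because $s$ is strictly admissible.

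The derivative of $\Theta$ in $Y$ at $(Y^0,0)$ is $\tfrac12\operatorname{diag}(S As)=\tfrac12\operatorname{diag}(|As|)$. This is diagonal with positive entries, hence invertible, again exactly by strict admissibility. The implicit function theorem then gives a unique smooth $Y(c)$ near $Y^0$ for $|c|$ small. This yields the expansion $X_s(c)=s/2-c\,s\oslash|As|+O(c^2)$, and substituting $c=\epsilon/h$ gives the $h$-form of~\eqref{eq:corner_branch_expansion}.

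It remains to check that these are genuine solutions for $c>0$. I need $\|X\|_\infty<1/2$, which holds because $s_iY_i\approx1/|(As)_i|>0$. I also need the sign assumptions to hold, which they do since $X\to s/2$ and $AX\to As/2$ have the sign pattern $s$. Then Theorem~\ref{thm:period_two_reduction} gives a period-two orbit.

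The main obstacle is uniqueness near $s/2$. The implicit function theorem gives uniqueness only in the rescaled variable $Y$ near $Y^0$, not directly in $X$. To close this gap, I will argue that any solution $X$ with $X\to s/2$ has $(1-2|X_i|)/c=2|X_i|/|(AX)_i|\to 2\cdot\tfrac12/(|As|_i/2)=2/|(As)_i|$. Hence $Y=(s/2-X)/c$ is forced into a neighborhood of $Y^0$, where the implicit-function uniqueness applies.
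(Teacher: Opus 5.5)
Your argument is correct, and the necessity half is the same as the paper's. The converse takes a different route.

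The paper avoids any singularity by applying the implicit function theorem to the left-hand form $\mathcal E(X,c)=2X-AX\oslash(|AX|+c\mathbf 1)$. Strict admissibility keeps each $(AX)_i$ away from zero near $s/2$, so $\mathcal E$ is smooth through $c=0$. Since $AX\oslash|AX|\equiv s$ locally at $c=0$, one gets $D_X\mathcal E(s/2,0)=2I_d$. The coefficient $-s\oslash|As|$ then comes from implicit differentiation, and uniqueness in a fixed neighborhood of $s/2$ is immediate.

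You instead use the right-hand form, which degenerates at $c=0$ because $1-2|X_i|\to0$, and you desingularize it with the blow-up $X=s/2-cY$. This makes the boundary-layer scaling $1-2|X_i|=2c\,s_iY_i$ explicit, and it gives the first-order coefficient directly as the unique root of the $c=0$ equation. The cost is an extra uniqueness step, since a fixed ball around $Y^0$ corresponds only to an $O(c)$-ball in $X$.

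Your a priori identity $(1-2|X_i|)/c=2|X_i|/|(AX)_i|$ does close that gap. State it uniformly rather than along a branch: the right-hand side is continuous in $X$ near $s/2$ and equals $2/|(As)_i|$ there. So on a fixed neighborhood $U$ of $s/2$, every solution has $Y=(s/2-X)/c$ inside the implicit-function ball, for every small $c>0$.
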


\begin{proof}
If $X(c)\to s/2$, then $\operatorname{sign}(X(c)) = \operatorname{sign}(AX(c)) = s$ for small $c>0$. Taking $c\downarrow 0$ yields $s_i(As)_i \ge 0$.

Conversely, when $s$ is admissible, $\operatorname{sign}(AX) = s$ holds in a full neighborhood of $s/2$. On this neighborhood, let $\mathcal E(X,c)$ denote the left-hand side minus the right-hand side of the first equation in~\eqref{eq:reduced_period_two}. This residual is smooth with $\mathcal E(s/2,0)=0$ and Jacobian $D_X\mathcal E(s/2,0)=2I_d$. By the Implicit Function Theorem, there exists a unique local curve $X_s(c)$ satisfying $\mathcal E(X_s(c),c)=0$. Differentiating implicitly at $c=0$ gives $X_s'(0) = -s\oslash|As|$, establishing \eqref{eq:corner_branch_expansion}.
\end{proof}

\begin{theorem}[Existence of admissible corners]\label{thm:spd_admissible_corner}
Every SPD matrix $A \succ 0$ possesses at least one admissible corner. Consequently, a corner period-two orbit exists for all sufficiently large $h$.
\end{theorem}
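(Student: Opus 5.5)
We will find an admissible corner by maximizing the quadratic form $q(s)=s^\top As$ over the finite set $\{-1,1\}^d$. Let $s$ be a maximizer; it exists because the set is finite. We will then show that $s$ is strictly admissible by a single-coordinate flip argument.

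Fix an index $i$ and let $s'$ agree with $s$ except that $s'_i=-s_i$. Then $s'=s-2s_ie_i$. Expanding with $A=A^\top$ and $s_i^2=1$ gives
\[
q(s')=q(s)-4s_i(As)_i+4a_{ii}.
\]
Maximality of $s$ gives $q(s')\le q(s)$, hence $s_i(As)_i\ge a_{ii}$. Positive definiteness of $A$ gives $a_{ii}=e_i^\top Ae_i>0$. Therefore $s_i(As)_i>0$ for every $i$, so $s$ is admissible in the sense of Definition~\ref{def:admissible_corner}. By symmetry of $q$, the sign vector $-s$ is admissible as well, which yields the pair of opposite corners mentioned before the definition.

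For the consequence, we invoke the converse half of Theorem~\ref{thm:corner_sign_consistency}. It provides a smooth branch $X_s(c)$ solving~\eqref{eq:reduced_period_two} for all sufficiently small $c=\epsilon/h>0$, that is, for all sufficiently large $h$. Two points remain to be checked. First, $X_s(c)$ is nonzero, since it lies near $s/2\neq0$. Second, it lies in the open cube $(-1/2,1/2)^d$. The expansion $X_s(c)=s/2-c\,(s\oslash|As|)+O(c^2)$ shows that each coordinate is pushed strictly inward, because the $i$th coordinate has magnitude $\tfrac12-c/|(As)_i|+O(c^2)<\tfrac12$. This is consistent with Theorem~\ref{thm:period_two_reduction}, which already guarantees $\|X\|_\infty<1/2$ for any solution of the left-hand equation. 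Theorem~\ref{thm:period_two_reduction} then converts this nonzero solution into the period-two orbit $(X_s,|AX_s|)\leftrightarrow(-X_s,|AX_s|)$.

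No step looks hard. The only thing to watch is that the flip identity uses symmetry of $A$ to combine the two cross terms. Diagonal positivity is all that is needed beyond that, so strict inequality comes for free.
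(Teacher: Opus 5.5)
Your proof is correct and is essentially the paper's argument: take $s$ maximizing $s^\top As$ over $\{-1,1\}^d$, and use the single-coordinate flip identity to get $s_i(As)_i\ge A_{ii}>0$. Your explicit appeal to Theorem~\ref{thm:corner_sign_consistency} and Theorem~\ref{thm:period_two_reduction} for the period-two consequence just spells out what the paper leaves implicit.
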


\begin{proof}
Let $s^* \in \operatorname{arg\,max}_{s\in\{-1,1\}^d} s^\top As$. Flipping the $i$th coordinate to form $\widehat{s} = s^* - 2s_i^*e_i$, maximality requires $0 \le (s^*)^\top As^* - \widehat{s}^\top A\widehat{s} = 4(s_i^*(As^*)_i - A_{ii})$. Since $A\succ 0$, the diagonal entries satisfy $A_{ii} > 0$, hence $s_i^*(As^*)_i \ge A_{ii} > 0$ for every coordinate $i$.
\end{proof}

Under the condition, which we call \textit{diagonal dominance}, where $A_{ii}>\sum_{j\ne i}|A_{ij}|$, we have:
\[
s_i(As)_i\ge A_{ii}-\sum_{j\ne i}|A_{ij}|>0
\]
for every sign vector $s$. Theorem~\ref{thm:corner_sign_consistency} then gives a local branch near each of the $2^d$ corners. Since opposite corners describe the same period-two orbit, these branches give $2^{d-1}$ distinct period-two orbits for sufficiently small $c>0$.

\begin{remark}
We have established corner branches for sufficiently large $h$. To relate them to the branches near $h_*$, we must study their continuation between the two regimes. This is especially important because the results above do not determine whether a corner branch reaches $h_*$ or whether a branch emerging from the origin approaches a corner as $h\to\infty$. We investigate these questions numerically in Section~\ref{sec:period_two_corner_experiment}.
\end{remark}

\section{Numerical conventions and methods}
\label{sec:numerical_methods}

To study the dynamics in the trapping region, we obtain numerical parameterizations of invariant circles and tori using the parameterization method (see Section~\ref{rem:parm}), then follow their branches as the learning rate varies. We study their stability through normal Lyapunov exponents and compare the results with the behavior of nearby trajectories.

\subsection{Parameters, map convention, and orientation}
\label{sec:numerical-parameters}

For the three-dimensional case study, we use the normalized epsilon-outside map $G$ with the parameters
\begin{equation}\label{eq:canonical-parameters}
d=3,\qquad \beta=0.98,\qquad \epsilon=10^{-7}.
\end{equation}

We write $Z=(X,W)\in\mathbb R^6$. We use the second iterate $P=G^2$ to study each component of the paired circles separately. Throughout the remainder of this paper, we specify whether iteration counts and rotation numbers refer to $G$ or $P$.

Unless stated otherwise, the trajectory examples and parameter surveys use the normalized initial state
\begin{equation}\label{eq:canonical-initial-state}
X_0=(0.3,0.4,-0.3),\qquad W_0=(0.2,0.2,0.2).
\end{equation}
For autonomous Adam (Section~\ref{sec:Adam}), we initialize the momentum coordinates at zero. We state separately when a trajectory is continued from the final state at the preceding parameter value.

Because RMSProp rescales gradients coordinatewise, we specify both the eigenvalues and the orientation of the Hessian. We construct a positive-definite matrix from a diagonal matrix $\Lambda=\operatorname{diag}(\lambda_1,\lambda_2,\lambda_3)$ with positive entries by taking
\begingroup
\begin{equation}\label{eq:hessian-rotation-construction}
A(\vartheta)=Q(\vartheta)\Lambda Q(\vartheta)^\top,
\qquad Q(\vartheta)=R_z(\vartheta_z)R_y(\vartheta_y)R_x(\vartheta_x).
\end{equation}
\endgroup
Here $Q$ is a rotation matrix using the Euler angle conventions from~\cite[Chapter~2]{murray1994robotic}:
\begingroup
\small
\begin{equation}\label{eq:coordinate-rotations}
R_x(\alpha)=\begin{pmatrix}1&0&0\\0&\cos\alpha&-\sin\alpha\\0&\sin\alpha&\cos\alpha\end{pmatrix},\quad
R_y(\alpha)=\begin{pmatrix}\cos\alpha&0&\sin\alpha\\0&1&0\\-\sin\alpha&0&\cos\alpha\end{pmatrix},\quad
R_z(\alpha)=\begin{pmatrix}\cos\alpha&-\sin\alpha&0\\\sin\alpha&\cos\alpha&0\\0&0&1\end{pmatrix}.
\end{equation}
\endgroup

For the main case study, we use a fixed approximation to the matrix in~\eqref{eq:hessian-rotation-construction}. We take $(\vartheta_x,\vartheta_y,\vartheta_z)=(60^\circ,-60^\circ,-143.5^\circ)$ and $\Lambda=\operatorname{diag}(0.5,0.8,1.18)$. The entries of $A$ are given in Section~\ref{sec:case-matrix} and rounded to four decimal places below.
\begingroup
\begin{equation}\label{eq:case-matrix-rounded}
A\approx\begin{pmatrix}
0.7621 & -0.0964 & 0.0886\\
-0.0964 & 1.1191 & 0.1679\\
0.0886 & 0.1679 & 0.5988
\end{pmatrix}.
\end{equation}
\endgroup

This gives $h_*\approx1.6949\times10^{-7}$ by~\eqref{eq:h_star}. Finally, in the orientation survey of Section~\ref{sec:angle-survey}, we vary the angles in~\eqref{eq:hessian-rotation-construction} while keeping the eigenvalues fixed.

\subsection{Computation and continuation of invariant circles and tori}
\label{sec:parametrization_methods_stability}

We numerically compute Fourier series expansions for attracting invariant circles 
and invariant tori using the parameterization method.

In the numerical examples below, we repeatedly find invariant sets in disjoint pairs (chaotic attractors, circles, periodic points), with iterates alternating between the two, as illustrated in Figures~\ref{fig:primary-crossing-composite} and~\ref{fig:four-panel-circles}. We call a pair of disjoint circles exchanged by $G$ a \textit{period-two invariant circle} and parameterize its two components together. Here period two refers to the exchange of components, independently of the motion within each component.

The two components $K_1,K_2:\mathbb T\to\mathbb R^6$, with $\mathbb T=\mathbb R/(2\pi\mathbb Z)$, satisfy
\begin{equation}
G(K_1(\theta))=K_2(\theta+\omega),\qquad G(K_2(\theta))=K_1(\theta+\omega).
\label{eq:invariant-circle-pair}
\end{equation}
Thus each component is invariant under $G^2$, with
\begin{equation}\label{eq:circle-return-rotation}
G^2(K_j(\theta))=K_j(\theta+2\omega),\qquad \rho_P=\omega/\pi\pmod1.
\end{equation}
Here $\omega$ is the angular increment in~\eqref{eq:invariant-circle-pair}. We define the rotation number $\rho_P$ of $P$ as the number of turns per iteration of $P=G^2$, modulo one. In the numerical tables we choose its signed representative in $(-1/2,1/2]$. Thus a negative value records rotation in the opposite direction for the chosen orientation of the parameterization.

We solve the above invariance equations, together with phase conditions, for the Fourier coefficients and rotation number $\rho_P$. Appendix~\ref{app:fourier_newton} gives a more detailed mathematical formulation for the parameterization of invariant circles and two-tori.

Since the parameterization method uses an iterative scheme to solve a functional equation 
describing the invariant circle, an essential step is obtaining a good enough initial 
approximation of the rotation number and Fourier coefficients.  For this we exploit
the Birkhoff averaging method of Das, Saiki, Sander, and Yorke~\cite{MR3526529,das2017quantitative,MR3755876} (see also~\cite{MR4104977}). In this sense, our work extends the approach of Blessing and 
Mireles James~\cite{blessing2023weighted} to dissipative maps in higher dimensions.
We then improve these approximations with Newton's method and follow the branches by fixed-parameter or pseudo-arclength continuation~\cite{keller1977numerical}.

Another advantage of solving a functional equation is that, by 
treating the learning rate as a bifurcation parameter, 
we follow attracting invariant circles through the loss of stability
via numerical continuation.
Section~\ref{sec:continuation-checks} gives the continuation procedure and accuracy checks.

\subsection{Lyapunov exponents and orbit classification}
\label{sec:trajectory-diagnostics}

We estimate the leading finite-time Lyapunov exponent (FTLE) $\widehat\lambda_1$ using the method of Benettin et al.~\cite{benettin1980part2}, with an unweighted average over $N$ consecutive iterations after discarding an initial transient. In this full-state estimate, we include perturbations in all spatial and memory variables, as well as the momentum variables for Adam. For invariant circles and tori, we compute normal Lyapunov exponents $\widehat\lambda_{\perp,i}$, which exclude tangent directions~\cite{jorba2001normal}. Appendix~\ref{app:fourier_newton} gives their construction and the weighted averages that we use for the circle and torus stability comparisons in Sections~\ref{sec:coexisting-circles} and~\ref{sec:c0-secondary-response}.

We report full-state Lyapunov exponent estimates per iterate of $G$ for RMSProp and $G_{\beta_1}$ for autonomous Adam, and normal Lyapunov exponents per iteration of $P=G^2$. Dividing the latter by two expresses them per iterate of $G$. Table~\ref{tab:individual-lyapunov-values} in Appendix~\ref{app:additional-numerical-methods} gives the full-state estimates and the numbers of discarded and measured iterations for the individual trajectory examples.

\subsubsection{Recurrence tests}

For $N$ measured states $Z_0,\ldots,Z_{N-1}$ after the discarded transient, we define the \textit{normalized recurrence error} using the final $m$ states:
\begin{equation}
 e_p=\frac{\operatorname{median}_{N-m+p\le k<N}\|Z_k-Z_{k-p}\|_2}
 {\max\{1,\max_{N-m\le k<N}\|Z_k\|_2\}},\qquad p=1,2.
 \label{eq:recurrence-error}
\end{equation}
For example, small $e_2$ together with appreciable $e_1$ suggests period-two motion. We use $m=128$ in the two-parameter RMSProp and Adam surveys, and $m=1{,}024$ in the Hessian-orientation survey.

\subsubsection{Criteria used in the parameter surveys}
\label{sec:survey-classification}

We group invariant circles, which are one-dimensional tori, and higher-dimensional tori in a single torus category for the parameter surveys. We first screen for fixed-point and period-two behavior using recurrence, then classify the remaining trajectories using the largest finite-time Lyapunov exponent. For the parameter surveys of Sections~\ref{sec:angle-survey}, \ref{sec:beta-survey}, and~\ref{sec:Adam}, we use the following four classification criteria, which do not cover every finite-window outcome:
\begingroup
\begin{equation}
\begin{array}{ll}
\text{Fixed point:}&e_1\le10^{-6},\\
\text{Period two:}&e_1>10^{-6},\quad e_2\le10^{-6},\\
\text{Torus:}&|\widehat\lambda_1|\le2\times10^{-4},\quad e_1,e_2>10^{-6},\\
\text{Chaotic:}&\widehat\lambda_1>2\times10^{-4},\quad e_1,e_2>10^{-6}.
\end{array}
\label{eq:survey-classification}
\end{equation}
\endgroup

For each survey, we discard 50,000 iterates of $G$, or $G_{\beta_1}$ for Adam, and measure Lyapunov exponents over the next 50,000. The parameter grids and the values of $m$ are given in Section~\ref{sec:survey-measurements}.

\begin{remark}
We use fairly coarse classification criteria for our parameter surveys. The recurrence checks do not systematically identify longer-period orbits, and the Lyapunov exponent cutoff limits our ability to distinguish quasiperiodic motion from weakly chaotic trajectories. We flag samples in the torus class with estimated Lyapunov exponents above $10^{-5}$ per iterate, together with trajectories that fit none of the four criteria, for longer computations and manual inspection. The surveys describe finite-window outcomes from one initial state at each grid point, so they do not determine the asymptotic dynamics or detect every coexisting attractor.
\end{remark}

\subsection{Computation of period-two orbits}\label{sec:finding_period_two_orbits}

The analytical results suggest two ways to find period-two orbits numerically. Near $h_*$, we use the variational formulation in Theorem~\ref{thm:period_two_existence} to search for them by optimization. For $h\gg h_*$, we use the corner expansions from Theorem~\ref{thm:corner_sign_consistency} to obtain initial approximations, then continue the branches toward smaller learning rates. For the corner continuation below, we use the parameters in Section~\ref{sec:numerical-parameters}.

\subsubsection{Finding period-two orbits near the threshold}\label{sec:period_two_variational_experiment}

For $h>h_*$ close to the threshold, we locate nonzero critical points of $\Phi_c$ from~\eqref{eq:period_two_existence_potential} on $(-1/2,1/2)^d$. We choose small random initial vectors near the eigenspace of $A$ for $\lambda_{\max}(A)$, as suggested by Theorem~\ref{thm:near_threshold_period_two_stability}. Starting from these vectors, we minimize $-\Phi_c$ using a box-constrained optimization algorithm such as L-BFGS-B~\cite{byrd1995limited,zhu1997algorithm}, with bounds strictly inside the open hypercube.

We refine the resulting critical points with Newton's method applied to~\eqref{eq:period-two-potential-gradient}, then recover the orbit points from~\eqref{eq:period-two-symmetry}. After identifying $X$ and $-X$ as the same period-two orbit, we check both map equations.

\subsubsection{Continuation from admissible corners}\label{sec:period_two_corner_experiment}

The corners are $s/2\in\frac12\{-1,1\}^d$, where $s\in\{-1,1\}^d$. We identify $s$ and $-s$, since they describe the same period-two orbit. In three dimensions, we therefore consider four representative corners, $s_1=(+++)$, $s_2=(++-)$, $s_3=(+-+)$, and $s_4=(+--)$, all of which are admissible for the matrix $A$ specified in Section~\ref{sec:numerical-parameters}. For each representative corner, we initialize at large $h$ using the leading terms of~\eqref{eq:corner_branch_expansion} and track branches toward smaller $h$ using pseudo-arclength continuation~\cite{keller1977numerical}. Along each path, we compute the Floquet multipliers to determine orbital stability.

We plot the continued branches against the normalized learning rate
\begin{equation}\label{eq:normalized-learning-rate}
\rho=h/h_*.
\end{equation}
For the matrix $A$ specified in Section~\ref{sec:numerical-parameters}, branch $s_4$ connects directly to the origin, whereas $s_1, s_2, s_3$ encounter fold bifurcations at $\rho \approx 2.18839, 13.35115, 4.41042$.

\begin{figure}[htbp]
\centering
\includegraphics[width=0.95\textwidth]{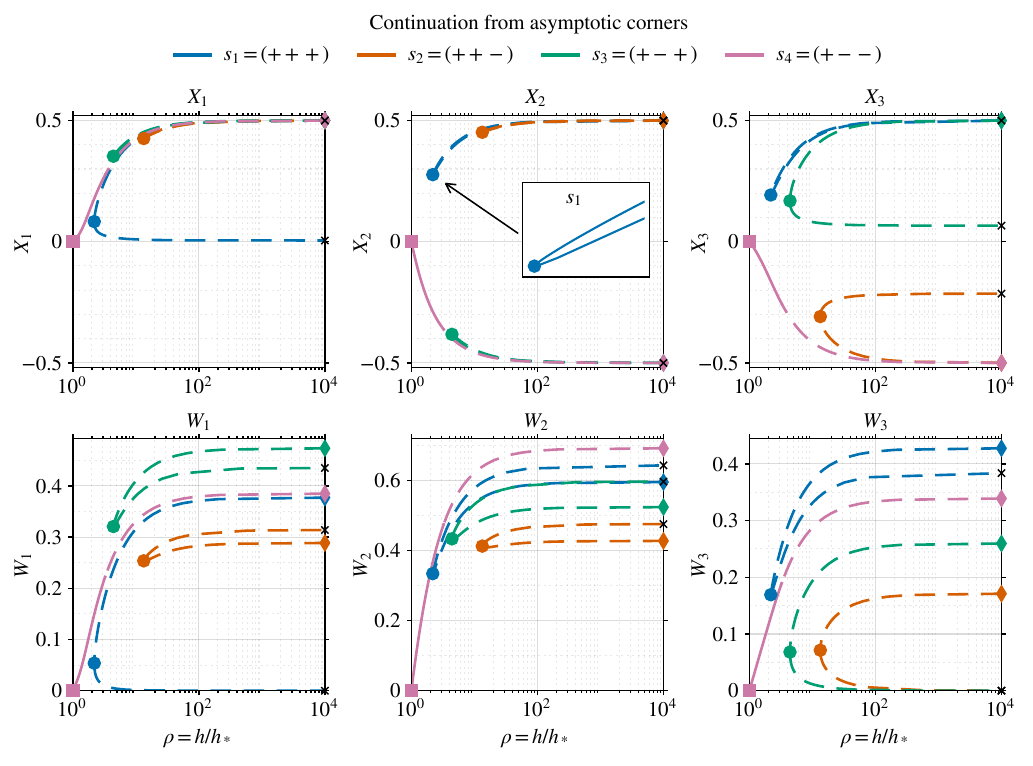}
\caption{We begin near the corners $s/2\in\frac12\{-1,1\}^d$ at large $h$ and continue the period-two branches toward smaller learning rates, using the matrix~\eqref{eq:case-matrix-rounded} and parameters~\eqref{eq:canonical-parameters}. The upper and lower rows show $X$ and $W$ versus $\rho=h/h_*$. Solid segments mark attracting parts of the branches, and dashed segments mark parts that are unstable under $G^2$, with at least one Floquet multiplier outside the unit circle.  Markers indicate fold bifurcations ($\bullet$), asymptotic corners ($\blacklozenge$), collision with the origin ($\blacksquare$), and last computed points ($\times$). The inset in the $X_2$ panel enlarges the sharp turn of the blue branch near $\rho=2.19$. The red and green branches have analogous sharp turns in the $X_1$ and $X_2$ projections.}
\label{fig:period_two_case_A_corner_paths}
\end{figure}

\section{Numerical bifurcation analysis}\label{sec:bifurcation-analysis}
\label{sec:dynamics_trapping_region}

We now return to the question raised in the introduction: what kinds of dynamics occur in the trapping region? We study the normalized RMSProp map with the matrix~\eqref{eq:case-matrix-rounded} and parameters~\eqref{eq:canonical-parameters}, using $P=G^2$ for each component of a period-two circle. Figure~\ref{fig:rmsprop-master-bifurcation-diagram} gives an overview as we increase the learning rate. We then examine three features in more detail: coexistence of invariant circles, a secondary Neimark--Sacker bifurcation, and a circle-doubling sequence. We also find intervals of learning rates for which the computed trajectories have positive finite-time Lyapunov exponents. Appendix~\ref{app:asym-numerical-inventory} collects additional sampled transitions and period-two circles. Figures~\ref{fig:same-plot-circles} and~\ref{fig:six-coexisting-sets} show further projections of the coexisting sets, and Appendix~\ref{app:orbit-gallery} gives additional trajectories.

\begin{figure}[!htbp]
\centering
\includegraphics[width=\linewidth]{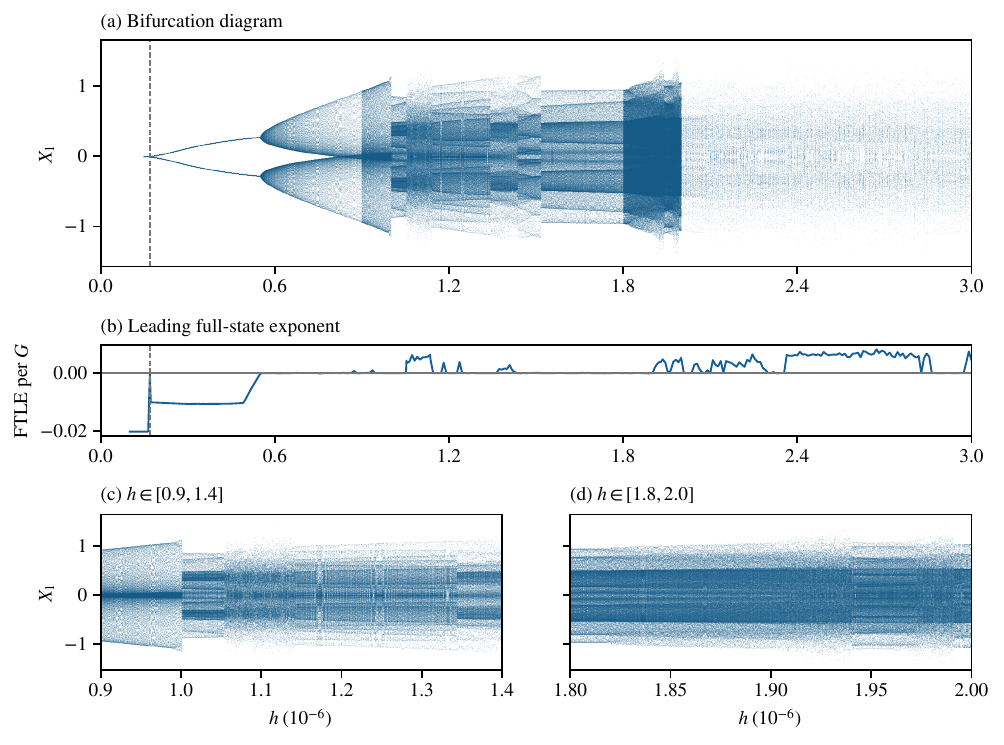}
\caption{Bifurcation diagram for the matrix~\eqref{eq:case-matrix-rounded} and parameters~\eqref{eq:canonical-parameters}.  For the upper panel, we begin with $X_0=W_0=10^{-8}\mathbf1$ and sample 9,000 increasing learning rates. We carry the final state forward with a small spatial perturbation before each run, keeping the initial memory coordinates at least $10^{-8}$. At each learning rate, we discard 50,000 iterates of $G$ and plot 150 values of $X_1$. The middle panel gives the leading full-state Lyapunov exponent per iterate of $G$ from independently initialized trajectories with 50,000 discarded and 20,000 measured iterates. These trajectories start from~\eqref{eq:canonical-initial-state}, so they may approach different attractors from those in the upper panel when several coexist. The bottom panels enlarge $h\in[0.9,1.4]\times10^{-6}$ and $h\in[1.8,2.0]\times10^{-6}$. The dashed vertical line marks $h_*$.}
\label{fig:rmsprop-master-bifurcation-diagram}
\end{figure}

\subsection{The primary invariant circle}

As we increase the learning rate, we find a conjugate pair of Floquet multipliers of the period-two orbit crossing the unit circle near $h_c=5.5222246\times10^{-7}$, suggesting a Neimark--Sacker bifurcation. The modulus of the multiplier pair rises from $0.9991767$ at $h=5.4946135\times10^{-7}$ to $1.0016684$ at $h=5.5792602\times10^{-7}$. Above this crossing, we compute the numerically attracting period-two circle $C_{\mathrm{NS}}$, shown in Figure~\ref{fig:primary-crossing-composite}.

\begin{figure}[!htbp]
\centering
\includegraphics[width=\linewidth]{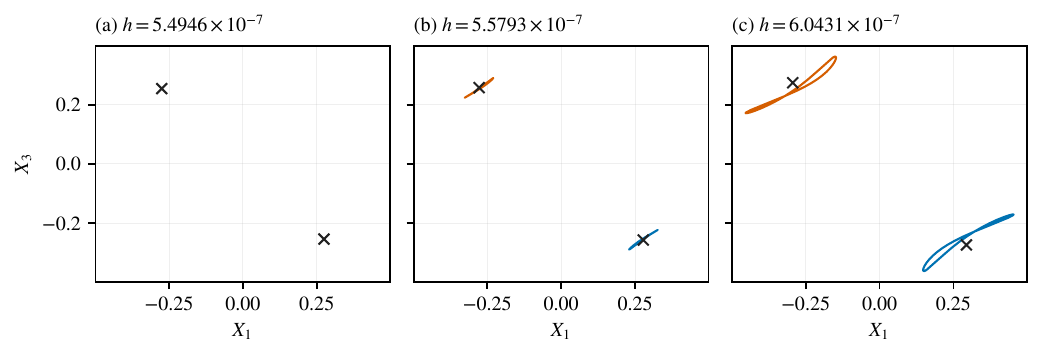}
\caption{The appearance of the primary invariant circle for the matrix~\eqref{eq:case-matrix-rounded} and parameters~\eqref{eq:canonical-parameters} in the $X_1$--$X_3$ projection. We use $h=5.4946$, $5.5793$, and $6.0431$ times $10^{-7}$ in the three panels. Crosses mark the two points of the period-two orbit while blue and orange curves show the alternating components of $C_{\mathrm{NS}}$.}
\label{fig:primary-crossing-composite}
\end{figure}

We call a circle component $q$-fold when its displayed projection has $q$ nearby strands. As $h$ increases, $C_{\mathrm{NS}}$ loses normal stability. We first observe a sign change in its leading normal Lyapunov exponent near $h=0.99245872\times10^{-6}$, together with a 2-fold circle candidate. By $h=1.0514\times10^{-6}$, $C_{\mathrm{NS}}$ has one numerically unstable normal direction.

\FloatBarrier
\subsection{Multistability}
\label{sec:coexisting-circles}

At $h=1.0514\times10^{-6}$, we find three numerically attracting period-two circles $C_0,C_1,C_2$, together with the saddle period-two circle $C_{\mathrm{NS}}$. Table~\ref{tab:asym-common-circles} gives their rotation numbers and leading normal Lyapunov exponents. For each of the three attracting circles, we find trajectories from nearby initial conditions that remain close for five million iterations of $P$, providing numerical evidence of multistability at this parameter value.

\begin{table}[htbp]
\centering\small
\caption{Four period-two circles for $A$ at $h=1.0514\times10^{-6}$. Rotation numbers and normal Lyapunov exponents are per iteration of $P=G^2$. We count signs of the weighted estimates using a threshold of $10^{-7}$ per iteration of $P$, with measurement checks in Section~\ref{sec:normal-lyapunov}.}
\label{tab:asym-common-circles}
\begin{tabular}{@{}lcc@{}}
\toprule
Circle & $\rho_P$ & $\widehat\lambda_{\perp,1}^{(P)}$\\
\midrule
$C_{\mathrm{NS}}$ & $0.03369894$ & $+6.086\times10^{-2}$\\
$C_0$ & $-0.01226718$ & $-1.224\times10^{-6}$\\
$C_1$ & $0.00244958$ & $-2.698\times10^{-5}$\\
$C_2$ & $0.00275643$ & $-4.071\times10^{-4}$\\
\bottomrule
\end{tabular}
\end{table}

\begin{figure}[!htbp]
\centering
\includegraphics[width=0.95\linewidth]{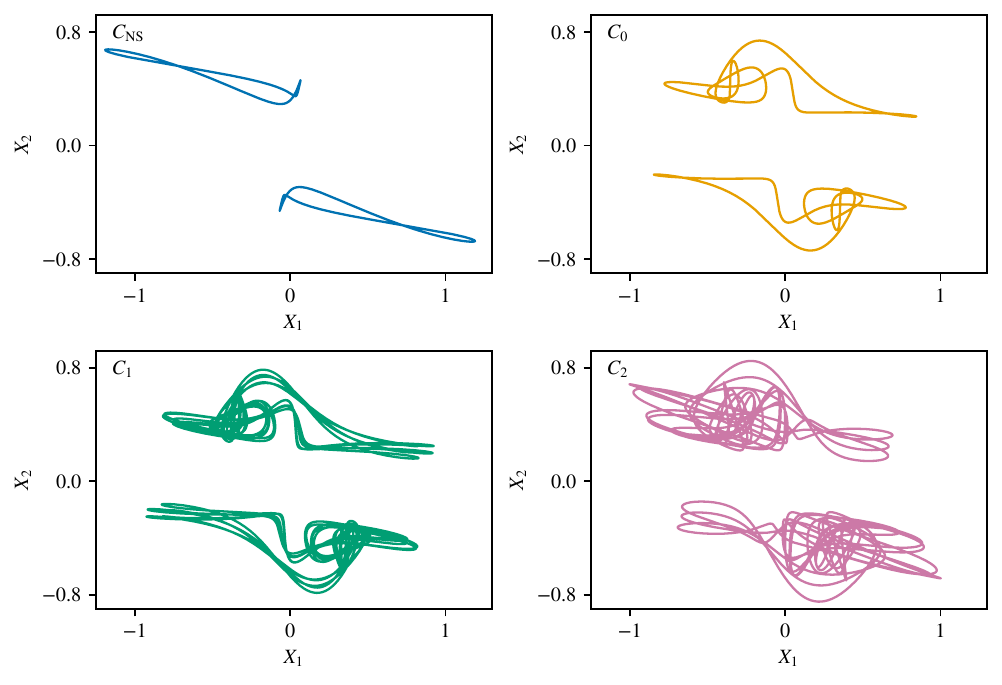}
\caption{Four distinct period-two circles at $h=1.0514\times10^{-6}$ for the matrix~\eqref{eq:case-matrix-rounded} and parameters~\eqref{eq:canonical-parameters}. Each panel shows both circle components in the $X_1$--$X_2$ projection, with iterates alternating between them under $G$. The circles $C_0,C_1,C_2$ attract nearby trajectories whereas $C_{\mathrm{NS}}$ has one unstable normal direction.}
\label{fig:four-panel-circles}
\end{figure}

\subsection{A secondary Neimark--Sacker bifurcation}
\label{sec:c0-secondary-response}

Visual evidence suggests a secondary Neimark--Sacker bifurcation of $C_0$ near $h=1.05147235\times10^{-6}$. The mean of the leading pair of normal Lyapunov exponents changes sign between $h=1.05147230\times10^{-6}$ and $h=1.05147240\times10^{-6}$. Below this interval, where $C_0$ is numerically attracting, we compute a small, weakly unstable saddle-torus candidate. This ordering suggests a subcritical bifurcation. At $h=1.0514\times10^{-6}$, we compute an attracting two-torus and a saddle two-torus that coexist with $C_0$. Figures~\ref{fig:c0-candidate-torus} and~\ref{fig:saddle-torus-c0} compare their geometry and sections with those of $C_0$.

\begin{figure}[!htbp]
\centering
\includegraphics[width=0.89\linewidth]{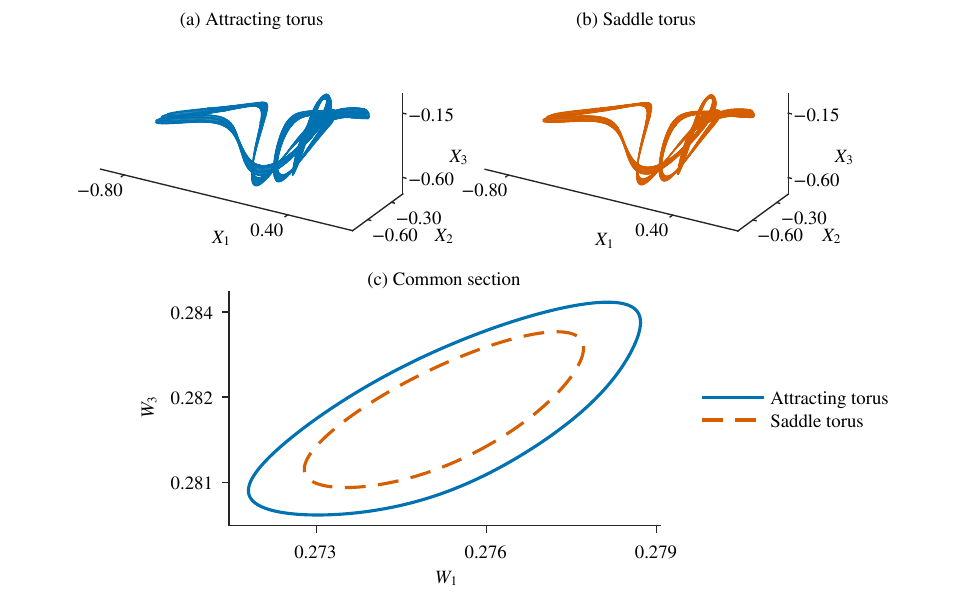}
\caption{Attracting and saddle two-tori for the matrix~\eqref{eq:case-matrix-rounded} and parameters~\eqref{eq:canonical-parameters} at $h=1.0514\times10^{-6}$. Panels (a,b) show one $P$-invariant component of each torus in $X_1$--$X_2$--$X_3$, and applying $G$ gives the other component. Panel (c) shows sections at $X_1=0.18905212$. We keep points with $X_2>-0.5$ at which $X_1$ increases as the first torus angle $\theta_1$ increases (Appendix~\ref{app:fourier_newton}).}
\label{fig:c0-candidate-torus}
\end{figure}

\begin{figure}[!htbp]
\centering
\includegraphics[width=0.89\linewidth]{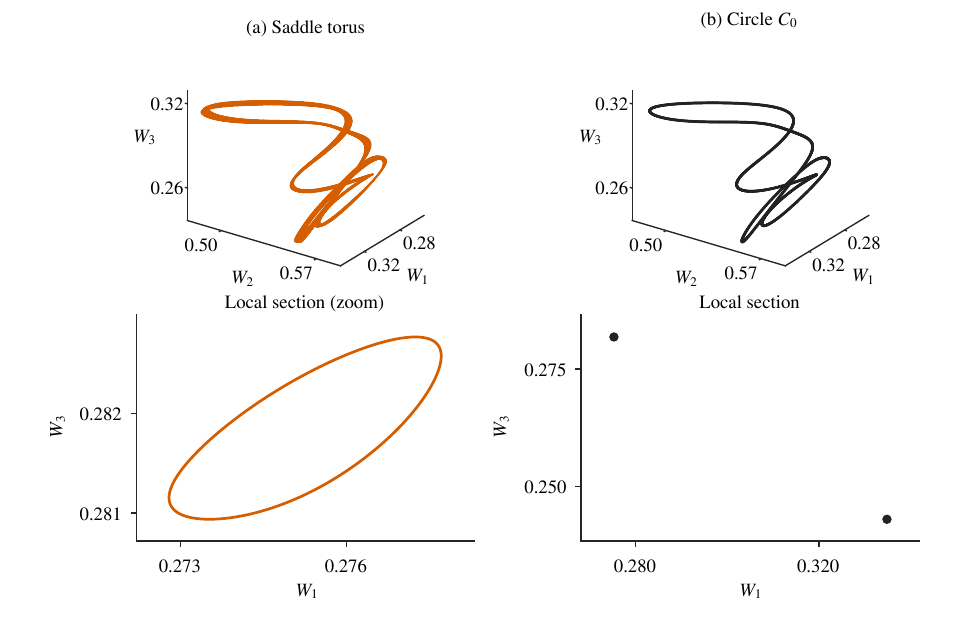}
\caption{The saddle torus and $C_0$ for the matrix~\eqref{eq:case-matrix-rounded} and parameters~\eqref{eq:canonical-parameters} at $h=1.0514\times10^{-6}$. Each lower panel shows the section $X_1=0.18905212$ in $W_1$--$W_3$. The torus section is a closed curve, whereas the circle gives two section points. The lower panels use different scales to compare the two geometries.}
\label{fig:saddle-torus-c0}
\end{figure}

\subsection{Chaotic dynamics}
\label{sec:chaotic_dynamics}

At $h=1.06\times10^{-6}$, Figure~\ref{fig:chaotic-attractor-1.06} shows the trajectory from the initial state~\eqref{eq:canonical-initial-state} after discarding 250,000 iterations of $P$. Its geometry and positive leading full-state Lyapunov exponent over the measured iterations suggest chaotic motion. We leave the bifurcations leading to this chaotic regime for future work.

\begin{figure}[!htbp]
\centering
\includegraphics[width=0.95\linewidth]{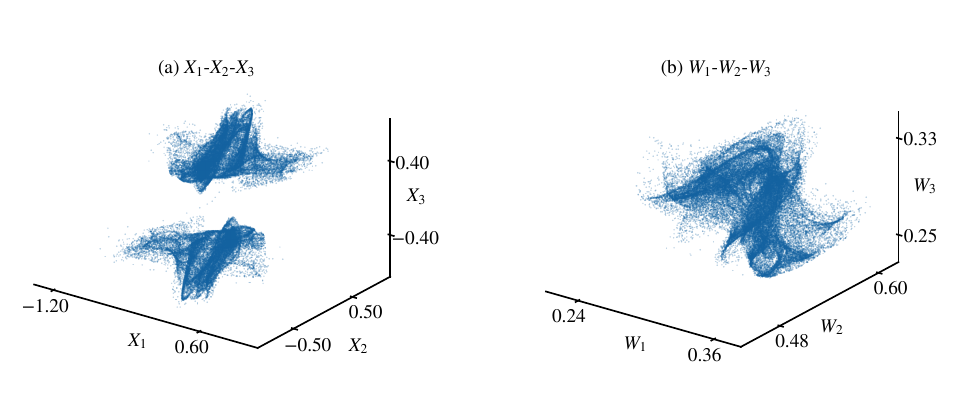}
\caption{Finite-window chaotic dynamics for the matrix~\eqref{eq:case-matrix-rounded} and parameters~\eqref{eq:canonical-parameters} at $h=1.06\times10^{-6}$. The panels show $X_1$--$X_2$--$X_3$ and $W_1$--$W_2$--$W_3$ projections of iterates 250,000--300,000 of $P$ from the initial state~\eqref{eq:canonical-initial-state}.}
\label{fig:chaotic-attractor-1.06}
\end{figure}

\subsection{A circle-doubling sequence}
\label{sec:phase_locking_torus_doubling}

Near $h=1.95\times10^{-6}$, we find a circle-doubling sequence, illustrated in Figure~\ref{fig:second-locking-cascade}. Here we again call a circle component $q$-fold when its displayed projection has $q$ nearby strands. In the plotted iterates of $P$ on one component, the number of strands doubles from three to six, then to twelve and twenty-four, as we increase $h$. At slightly larger learning rates, we also observe chaotic motion with positive finite-time Lyapunov exponents, as shown in Figure~\ref{fig:rmsprop-master-bifurcation-diagram}. The observed sequence suggests a torus-doubling route to chaos, although we have not identified the individual bifurcations.

\begin{figure}[!htbp]
    \centering
    \includegraphics[width=0.95\linewidth]{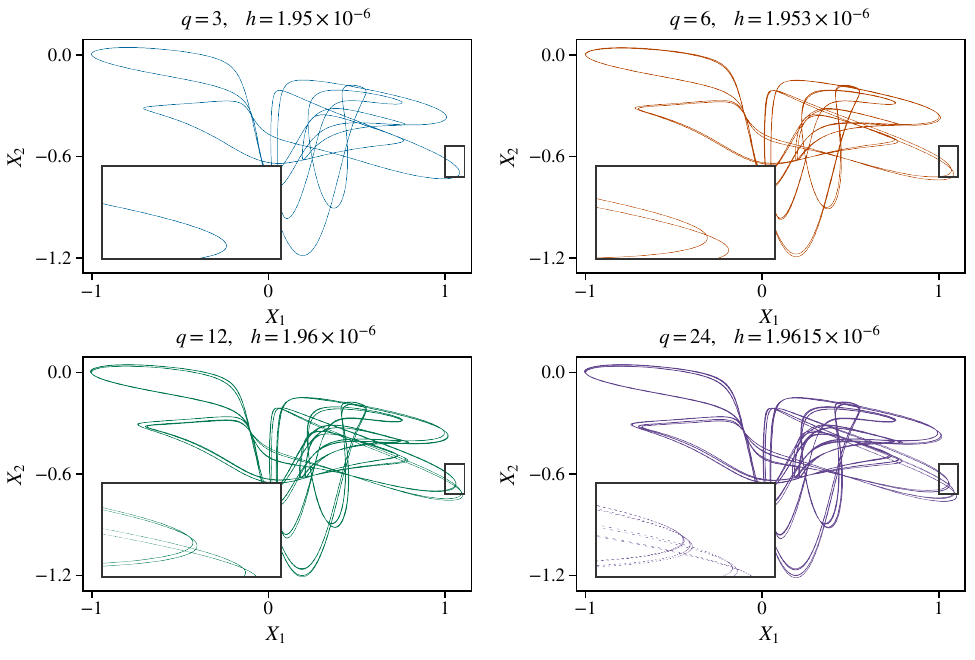}
    \caption{Circle doubling for the matrix~\eqref{eq:case-matrix-rounded} and parameters~\eqref{eq:canonical-parameters}, shown in the $X_1$--$X_2$ projection of iterates of $P=G^2$. In reading order, $(q,h)=(3,1.950\times10^{-6})$, $(6,1.953\times10^{-6})$, $(12,1.960\times10^{-6})$, and $(24,1.9615\times10^{-6})$. For each trajectory, we start from~\eqref{eq:canonical-initial-state}, discard 500,000 iterations of $P$, and plot the next 262,144. Insets show the splitting into additional strands.}
    \label{fig:second-locking-cascade}
\end{figure}
\FloatBarrier

\section{Survey of parameters}\label{sec:survey}

We now vary the Hessian orientation, memory decay, and dimension to examine their effects on the dynamics. We then test a simple learning-rate reduction schedule.

\subsection{Dependence on the Hessian orientation}\label{sec:angle-survey}

We use a $6^\circ$ ZYX Euler-angle grid of $111{,}600$ triples with fixed spectrum $(0.5,0.8,1.18)$, $\beta=0.98$, $\epsilon=10^{-7}$, and $h=3\times10^{-6}$, following the protocol in Section~\ref{sec:survey-classification}. Section~\ref{sec:survey-measurements} gives the angular ranges. The Euler-angle representation of $SO(3)$ is redundant, and a uniform grid in these coordinates does not sample rotations uniformly~\cite{lavalle2006planning}.

Using the finite-window criteria in~\eqref{eq:survey-classification}, we classify $103{,}248$ trajectories as period two, $6{,}043$ as torus motion, and $2{,}309$ as chaotic. The non-periodic cases in Figure~\ref{fig:euler-sweep-classification} form narrow bands. Thus, even with the eigenvalues fixed, changing the Hessian orientation changes the observed motion. 

\begin{figure}[htbp]
    \centering
    \includegraphics[width=0.98\linewidth]{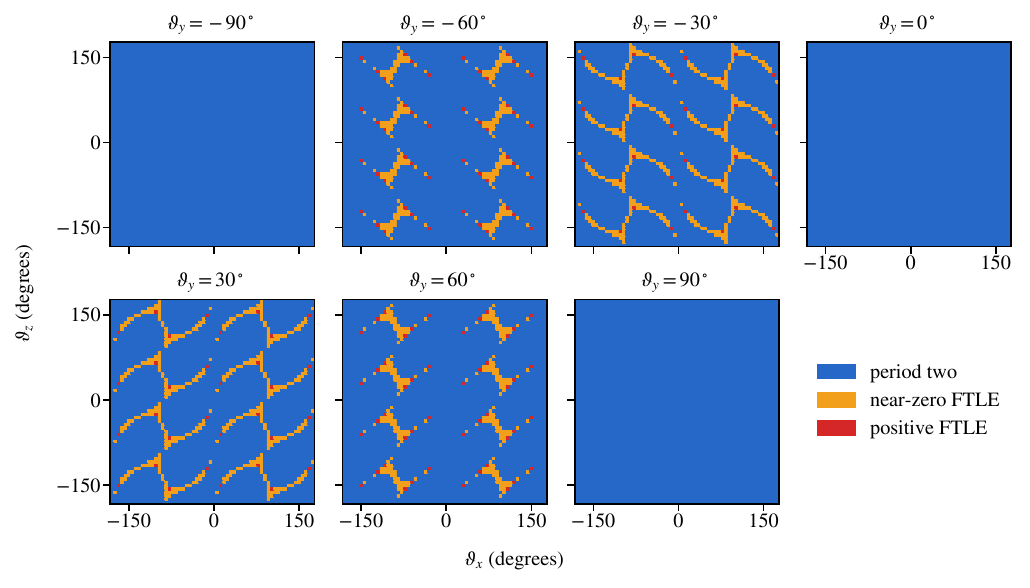}
    \caption{We find different types of motion as we change the Hessian orientation while keeping its spectrum fixed. We use a uniform ZYX Euler-angle grid of $111{,}600$ triples at $h=3\times10^{-6}$, with $\beta$ and $\epsilon$ given in~\eqref{eq:canonical-parameters}. The panels show slices at $\vartheta_y\in\{-90^\circ, -60^\circ, -30^\circ, 0^\circ, 30^\circ, 60^\circ, 90^\circ \}$. We use the measurement protocol and classification criteria in Section~\ref{sec:survey-classification} for each trajectory. Blue, orange, and red denote the finite-window period-two, torus, and chaotic classifications, respectively.}
    \label{fig:euler-sweep-classification}
\end{figure}

\subsection{Dependence on memory decay \texorpdfstring{$\beta$}{beta}}\label{sec:beta-survey}

We vary $h$ and $\beta$ with the fixed matrix $A$, using the $196\times161$ grid in Figure~\ref{fig:rmsprop-h-beta-diagram} and the protocol of Section~\ref{sec:survey-classification}. Using these criteria, we classify 806 trajectories as approaching the origin, 28,089 as period two, 1,833 as torus motion, and 825 as chaotic. We examine the remaining three samples further because they fit none of the criteria. In longer computations, we find periodic motion with periods 108, 214, and 1,814 iterates of $G$, and we refine these orbits by multiple shooting, as described in Section~\ref{sec:higher-period-motion}. The survey shows a period-two region and bands of invariant circles, tori, and chaotic dynamics for $\beta$ close to one.

\begin{figure}[htbp]
\centering
\includegraphics[width=\linewidth]{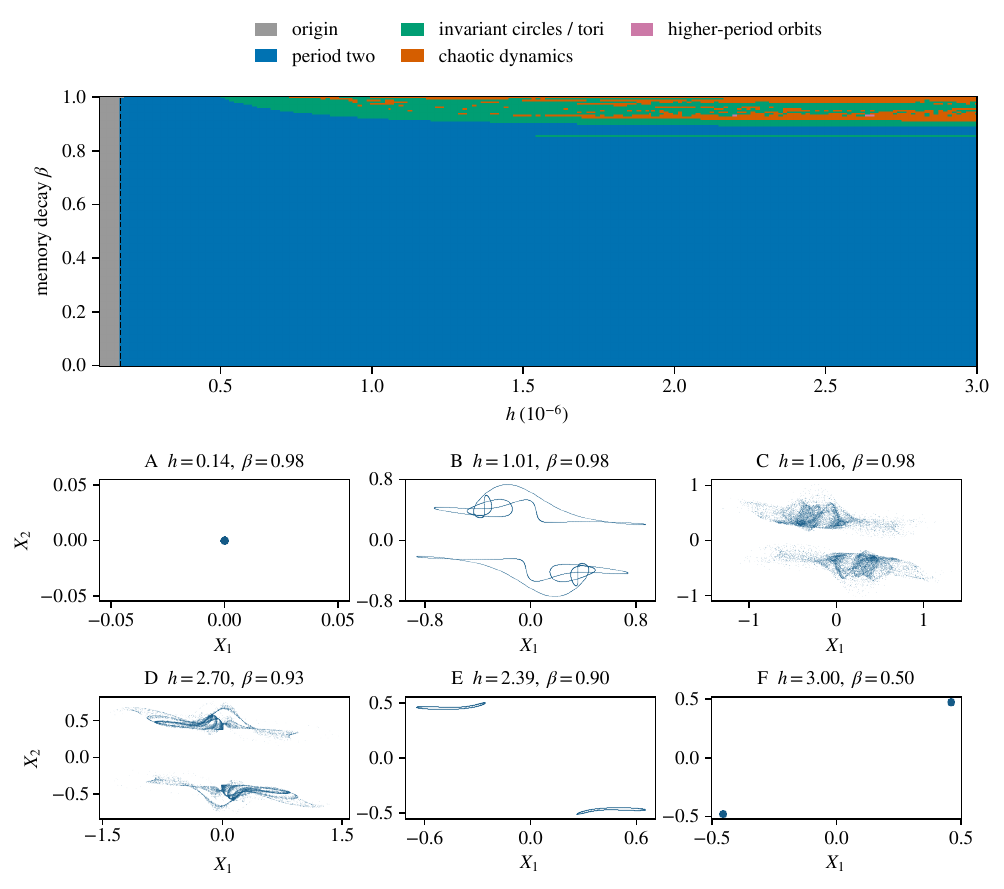}
\caption{The $(h,\beta)$ parameter diagram for epsilon-outside RMSProp with $A$ from~\eqref{eq:case-matrix-rounded} and $\epsilon$ from~\eqref{eq:canonical-parameters}. The grid is specified in Section~\ref{sec:survey-measurements}. The dashed line is $h=h_*$.  Panels A--F show 20,000 states after 50,000 discarded iterates of $G$ at selected parameters. Magenta marks the three samples manually classified as higher-period motion.}
\label{fig:rmsprop-h-beta-diagram}
\end{figure}

\subsection{Dimension}
\label{sec:higher-dimensional}

To examine how the dynamics change with dimension, we consider examples in dimensions one, two, four, and five.

\subsubsection{Examples in dimensions one and two}
\label{sec:low-dimensional-bifurcation-clouds}

In the one-dimensional computations, we find convergence to the origin below $h_*$ and to a stable period-two orbit above $h_*$.

For $d=2$, we use $ A_2=Q(0.84975)\operatorname{diag}(0.5,0.8)Q(0.84975)^\top \text{ and } \beta=0.5, $ that is,
\[
A_2\approx
\begin{bmatrix}
0.669252296871&-0.148759366311\\
-0.148759366311&0.630747703129
\end{bmatrix},\qquad Q(\theta)=
\begin{bmatrix}
\cos\theta&-\sin\theta\\
\sin\theta&\cos\theta
\end{bmatrix}.
\]
Here $Q(\theta)$ is the planar rotation matrix, with $\theta$ measured in radians.

Equations~\eqref{eq:h_star} and~\eqref{eq:normalized-learning-rate}, with $\lambda_{\max}=0.8$ and $\epsilon=10^{-7}$, give
\[
h_*=2.5\times10^{-7}.
\]
The sweep contains $2{,}259$ uniformly spaced values of $\rho\in[0.2,30]$, equivalently $h\in[0.05,7.5]\times10^{-6}$, with $\Delta\rho\approx0.01320$.  At each parameter value, we start a new trajectory in the normalized coordinates, with
\[
X_0(h)=\frac{\epsilon}{h}A_2^{-1}b_2,\qquad
W_0(h)=\frac{\epsilon}{h}a_2,
\]
where $b_2=(0.3,0.4)^\top$, and $a_2=(0.2,0.2)^\top$.

In the two-dimensional example, we find higher-period cycles and coexistence of different types of motion. At $\rho=15$, Figure~\ref{fig:low-dimensional-bifurcation-clouds} shows a stable period-two orbit together with a trajectory whose geometry and positive finite-time Lyapunov exponent suggest chaotic motion.

We continue the period-two branch emerging from the origin over $\rho\in[11.2,20]$ and find that it remains numerically attracting. At selected parameter values, a search over 96 initial conditions finds chaotic motion coexisting with this period-two orbit.

\begin{figure}[htbp]
\centering
\includegraphics[width=\linewidth]{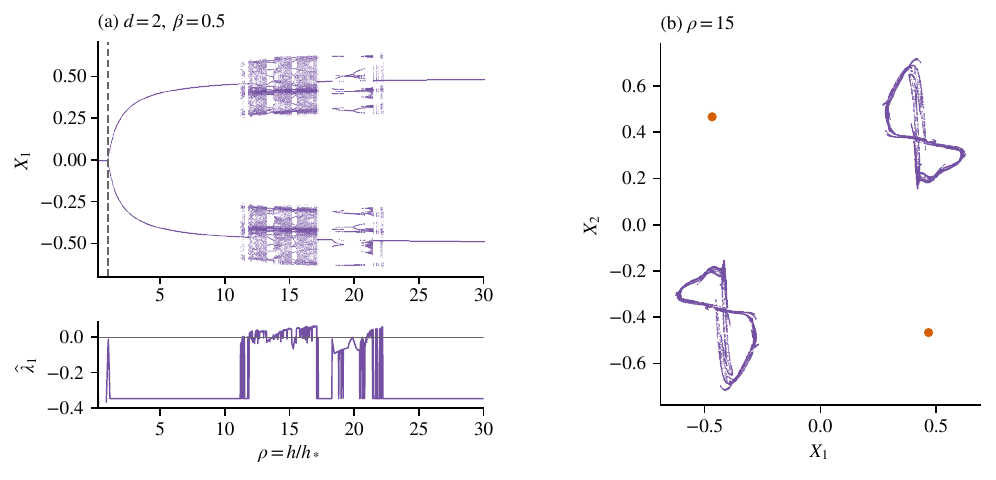}
\caption{Epsilon-outside RMSProp in dimension two with $A_2$ as above, $\beta=0.5$, and $\epsilon=10^{-7}$. (a) Bifurcation diagram from separately initialized trajectories at 2,259 values of $\rho\in[0.2,30]$. We plot the 160 post-transient $X_1$ values at each parameter value, and the strip gives the leading FTLE per iterate of $G$. (b) Coexisting chaotic attractor (purple) and stable period-two orbit (orange) at $\rho=15$, or $h=3.75\times10^{-6}$. The purple trajectory starts from $X_0=A_2^{-1}(0.3,0.4)^\top/37.5$, $W_0=(0.2,0.2)^\top/37.5$; the orange trajectory starts from $X_0=(0.4,-0.4)^\top$, $W_0=(0.3,0.3)^\top$. Both show 20,000 consecutive states after 500,000 discarded iterates of $G$; the orange iterates alternate between the two points $P_1$ and $P_2$ of the period-two orbit.}
\label{fig:low-dimensional-bifurcation-clouds}
\end{figure}

\subsubsection{Coupled extensions in dimensions four and five}\label{sec:coupled-extensions}
\label{sec:coupled-higher-dimensional}

We test whether the three-dimensional behavior persists when we add one or two coordinates and couple them to the original variables through
\[ A_d(\eta)=\begin{bmatrix}A&\eta B_d\\ \eta B_d^\top&D_d\end{bmatrix},\qquad \eta\geq0. \]
At $\eta=0$, the block-diagonal matrix $A\oplus D_d$ recovers the three-dimensional map on the invariant coordinate subspace $X_4=\cdots=X_d=W_4=\cdots=W_d=0$.
For $d=4$, we take
\[ D_4=[0.15],\qquad B_4=\frac{1}{\sqrt{0.40^2+0.70^2+0.55^2}}\begin{bmatrix}0.40\\-0.70\\0.55\end{bmatrix}. \]
For $d=5$, we take $D_5=\operatorname{diag}(0.15,0.12)$ and normalize the two columns separately:
\[ B_5=\begin{bmatrix}0.50/\nu_1&-0.20/\nu_2\\-0.35/\nu_1&0.75/\nu_2\\0.60/\nu_1&0.45/\nu_2\end{bmatrix},\qquad \nu_1=\sqrt{0.50^2+0.35^2+0.60^2},\quad \nu_2=\sqrt{0.20^2+0.75^2+0.45^2}. \]

We use $\eta=0.10$ in both dimensions, for which $A_d(0.10)$ is positive definite.

To obtain the bifurcation diagrams in Figure~\ref{fig:higher-dimensional-bifurcations}, we initialize a trajectory at each learning rate using the first three spatial and memory coordinates from~\eqref{eq:canonical-initial-state} and perturbations in the additional coordinates:
\[ X_\perp^{(0)}=10^{-5}\quad(d=4),\qquad X_\perp^{(0)}=10^{-5}(1,-1/2)\quad(d=5),\qquad W_\perp^{(0)}=10^{-5}\mathbf1. \]

With this coupling, the trajectories at $h=1.06\times10^{-6}$ have positive leading finite-time Lyapunov exponents, consistent with chaotic motion.

\begin{figure}[htbp]
    \centering
    \includegraphics[width=\linewidth]{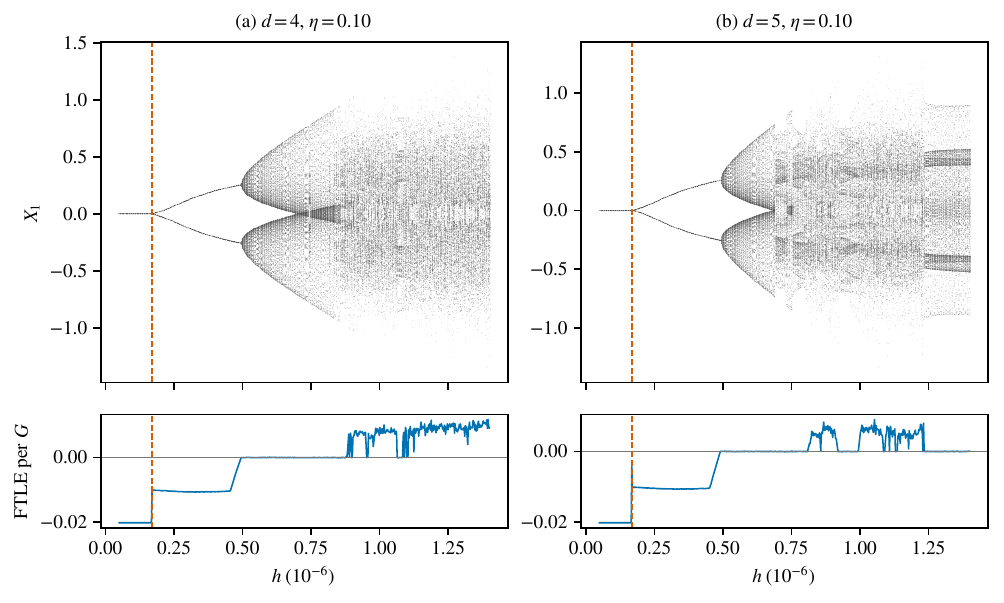}
    \caption{Bifurcation diagrams for $d=4$ and $d=5$ extensions. Both panels use parameters $\eta=0.10$, $\beta=0.98$, and $\epsilon=10^{-7}$. Dashed lines indicate the stability thresholds $2\epsilon/\lambda_{\max}(A_d)$. The lower strips provide the leading FTLE per iterate of $G$ computed over 20,000 consecutive iterations after discarding an initial transient.}
    \label{fig:higher-dimensional-bifurcations}
\end{figure}

\subsection{Adaptive learning rates}\label{sec:adaptive-learning-rate}

In practice, we want to reduce the loss while using as few iterates as possible. For fixed $h\ge h_*$, we prove an eventual spatial bound proportional to $h$ in Theorem~\ref{thm:rmsprop-trapping-bound}. The observed post-transient radii in Figure~\ref{fig:high-h-scaling} agree with this scaling in the original coordinates.

We therefore try beginning with a relatively large learning rate and decreasing it when the average loss stops decreasing. Figure~\ref{fig:learning-rate-loss} illustrates the initial descent and subsequent oscillating loss at fixed $h=0.1$.

\begin{figure}[htbp]
    \centering
    \includegraphics[width=0.95\linewidth]{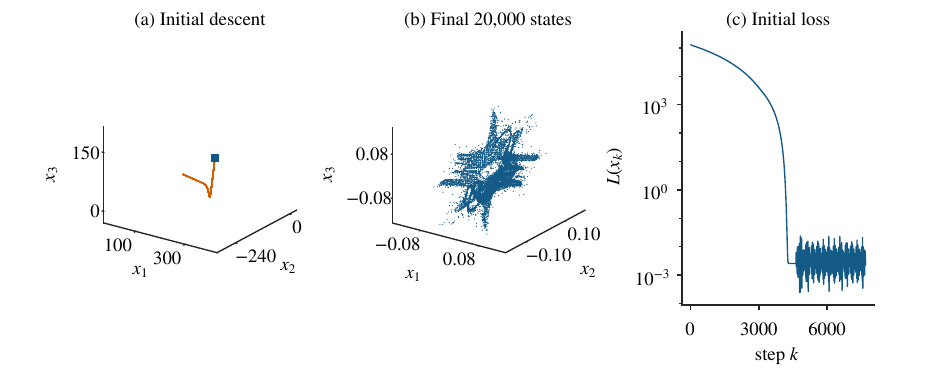}
    \caption{With the learning rate fixed at $h=0.1$, we find that the loss initially decreases and then fluctuates around a positive level. We use epsilon-outside RMSProp with the matrix~\eqref{eq:case-matrix-rounded} and parameters~\eqref{eq:canonical-parameters}, starting from $x_0=(400,-300,200)$ and $v_0=0$. The three panels show the first 6,000 states in the $x$ spatial coordinates, the final $20{,}000$ states, and the loss through $k=7700$, respectively.}
    \label{fig:learning-rate-loss}
\end{figure}

To decide when to reduce $h$, we compare the average of the latest 300 losses with the average of the preceding 300 losses:
\begin{equation*}
\overline L_k=\frac{1}{300}\sum_{j=0}^{299}L(x_{k-j}),
\qquad
|\overline L_k-\overline L_{k-300}|\le0.10\max\{\overline L_k,\overline L_{k-300}\}.
\end{equation*}
After each update, we shift both averaging windows of 300 iterates forward by one iterate. When the inequality holds for 300 consecutive updates, we divide $h$ by ten. After each reduction, we retain $(x_k,v_k)$, reset the count of consecutive updates satisfying the inequality, and collect 600 new losses before checking again. We start at $h=0.1$ and stop reducing at $h=10^{-5}$.

For the parameters and initial state of Figure~\ref{fig:learning-rate-loss}, the reductions occur at $k=5474,6764,8048,9326$, after which we run another 3,000 iterations at $h=10^{-5}$. In Figure~\ref{fig:adaptive-learning-rate-schedule}, the average loss decreases after each reduction, although the loss continues to oscillate.

\begin{figure}[htbp]
\centering
\includegraphics[width=\linewidth]{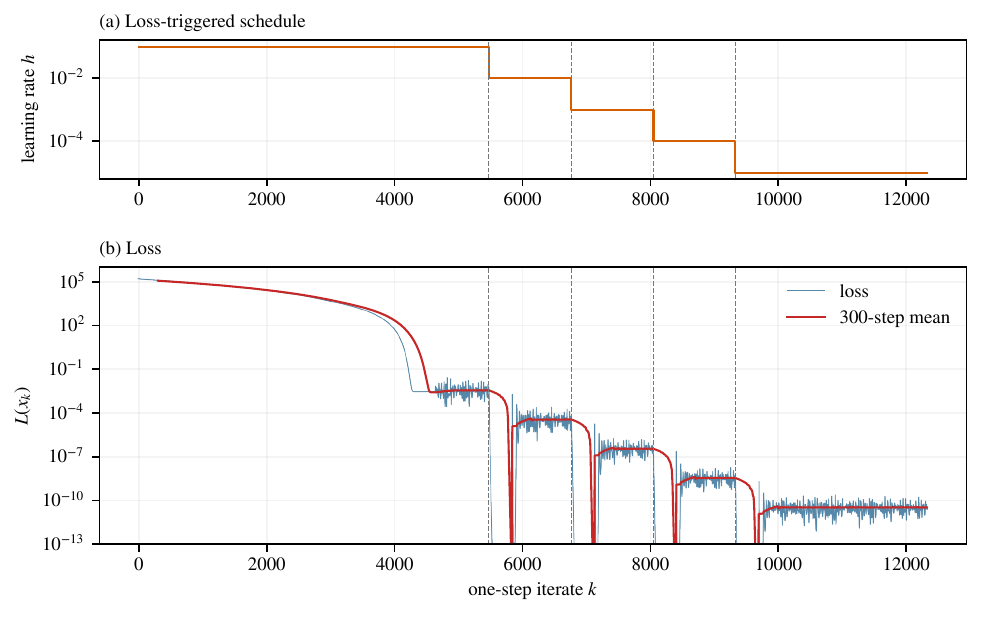}
\caption{Adaptive learning-rate reduction. (a) Learning rate versus iterate, where dashed vertical lines mark reductions in learning rate $h$. (b) Loss at every iterate (blue) and the average of the latest 300 loss values (red), plotted at every iterate $k\ge299$. Here the matrix and parameters are given in~\eqref{eq:case-matrix-rounded} and~\eqref{eq:canonical-parameters}. }
\label{fig:adaptive-learning-rate-schedule}
\end{figure}

\section{Autonomous Adam}\label{sec:Adam}

Adam was introduced by Kingma and Ba~\cite{kingma2015adam}. We write their step size as $h$, initialize $m_0=v_0=0$, and let $x_0$ be the initial spatial vector. At iteration $k\geq1$, the original (non-autonomous) Adam algorithm is:
\begin{equation*}\begin{aligned} g_k&=\nabla f_k(x_{k-1}), & m_k&=\beta_1m_{k-1}+(1-\beta_1)g_k, & v_k&=\beta_2v_{k-1}+(1-\beta_2)g_k^{\odot2},\\ \widehat m_k&=\frac{m_k}{1-\beta_1^k}, & \widehat v_k&=\frac{v_k}{1-\beta_2^k}, & x_k&=x_{k-1}-h\,\widehat m_k\oslash\left(\sqrt[\odot]{\widehat v_k}+\epsilon\mathbf{1}\right). \end{aligned}\end{equation*}
The denominator convention is epsilon outside, as in Kingma--Ba Algorithm~1 and the cited PyTorch and Keras implementations \cite{kingma2015adam,pytorchAdamAPI2026,kerasAdamSource2026}. Here $f_k$ denotes the stochastic objective sampled at iteration $k$. In the deterministic quadratic setting considered here, we set $f_k=L$ for every $k$, with $L$ given by~\eqref{eq:quadratic}, so that $g_k=Ax_{k-1}$. The ``bias-correction'' factors $(1-\beta_1^k)^{-1}$ and $(1-\beta_2^k)^{-1}$ make the finite-time iteration dependent on $k$ and hence non-autonomous.

For $0\leq\beta_1,\beta_2<1$, the bias-correction factors approach one as the iteration number increases. We therefore study the autonomous map obtained by replacing them with their limiting values. In unscaled variables, this map is given below, with $F_V$ from~\eqref{eq:memory} evaluated at $\beta=\beta_2$.
\begin{equation}
m_{k+1}=\beta_1m_k+(1-\beta_1)Ax_k,\quad
v_{k+1}=F_V(x_k,v_k),\quad
x_{k+1}=x_k-h\,m_{k+1}\oslash\left(\sqrt[\odot]{v_{k+1}}+\epsilon\mathbf{1}\right).
\label{eq:adam-autonomous-unscaled}
\end{equation}

Using $X,W$ from~\eqref{eq:rmsprop-normalization} and scaling the momentum as $M=m/h$, we obtain the map $G_{\beta_1}$ below. Its memory component $G_2$ is given by~\eqref{eq:normalized-rmsprop-drift-lemma} with $\beta=\beta_2$.
\begin{equation}\label{eq:adam-autonomous-scaled}
\begin{aligned}
M_{k+1}&=\beta_1M_k+(1-\beta_1)AX_k,\qquad W_{k+1}=G_2(X_k,W_k),\\
X_{k+1}&=X_k-M_{k+1}\oslash\left(W_{k+1}+c\mathbf{1}\right).
\end{aligned}
\end{equation}

\subsection{Continuation of invariant circles into Adam}
\label{sec:adam-continuation}

At $\beta_1=0$, eliminating $M$ from~\eqref{eq:adam-autonomous-scaled} gives~\eqref{eq:normalized-rmsprop-drift-lemma} with $\beta=\beta_2$. To initialize continuation in $\beta_1$, we add momentum coordinates to an RMSProp period-two circle $K_j=(X_j,W_j)$.

The momentum relation in~\eqref{eq:adam-autonomous-scaled} at $\beta_1=0$, shifted back one iterate, gives the gradient at the preceding state. By~\eqref{eq:invariant-circle-pair}, the predecessor of $K_j(\theta)$ is $K_{3-j}(\theta-\omega)$.
This gives the momentum coordinates for the two circle components,
\[
\widetilde K_1(\theta)=\bigl(X_1(\theta),AX_2(\theta-\omega),W_1(\theta)\bigr),\qquad
\widetilde K_2(\theta)=\bigl(X_2(\theta),AX_1(\theta-\omega),W_2(\theta)\bigr).
\]
We write $G_0$ for $G_{\beta_1}$ at $\beta_1=0$. With this notation, one iteration of Adam gives
\[
G_0\bigl(\widetilde K_j(\theta)\bigr)
=\bigl(X_{3-j}(\theta+\omega),AX_j(\theta),W_{3-j}(\theta+\omega)\bigr)=\widetilde K_{3-j}(\theta+\omega).
\]

Using these parameterizations as initial approximations at $h=7.80889203\times10^{-7}$, we continue the RMSProp period-two circle to autonomous Adam at $\beta_1=0.01,0.02$. Figure~\ref{fig:adam-small-circle} compares these continuations with the RMSProp case $\beta_1=0$.

\begingroup
\captionsetup{skip=5pt}
\setlength{\textfloatsep}{6pt plus 2pt minus 2pt}
\setlength{\floatsep}{8pt plus 2pt minus 2pt}
\begin{figure}[htbp]
\centering
\includegraphics[width=\linewidth]{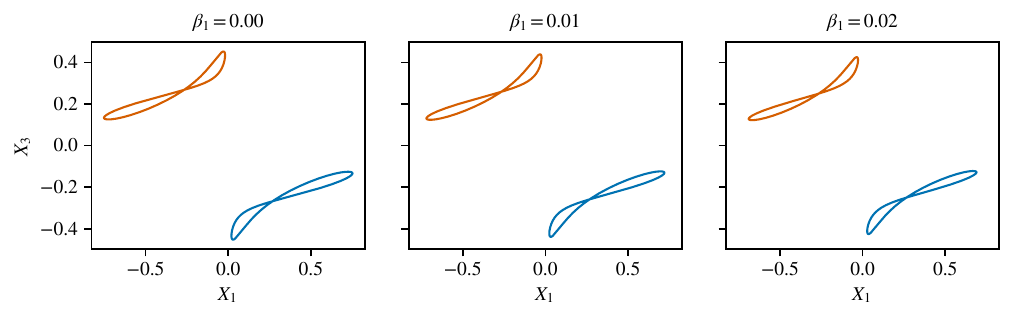}
\caption{An RMSProp period-two circle at $h=7.80889203\times10^{-7}$ and its autonomous-Adam continuations at $\beta_1=0.01,0.02$, with the matrix~\eqref{eq:case-matrix-rounded} and parameters~\eqref{eq:canonical-parameters}, using $\beta_2=\beta$.}
\label{fig:adam-small-circle}
\end{figure}

We then follow the period-two circle as $\beta_1$ increases. Figure~\ref{fig:adam-continued-circles-visible} shows it shrinking toward the period-two orbit near $\beta_1\approx0.21550$. Along the period-two branch, we compute a conjugate pair of Floquet multipliers crossing the unit circle near the same value, suggesting a Neimark--Sacker bifurcation.

\begin{figure}[!htbp]
    \centering
    \includegraphics[width=0.95\linewidth]{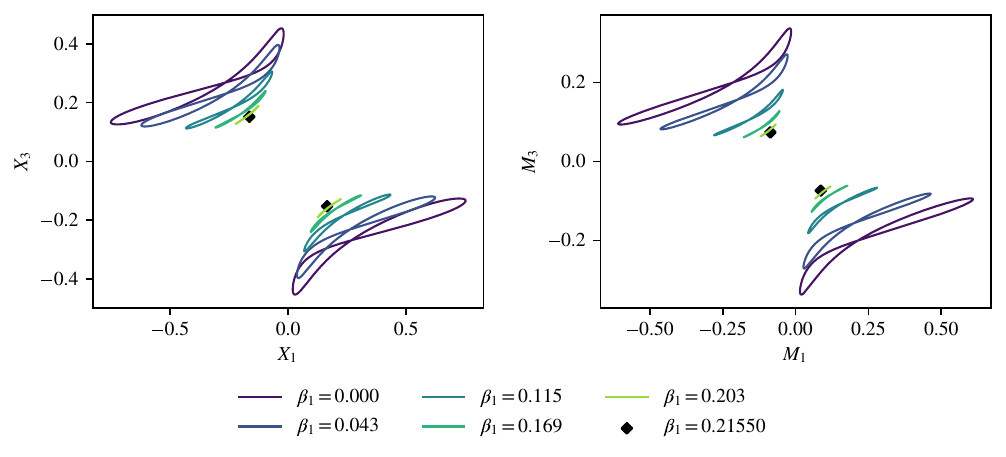}
    \caption{Autonomous-Adam circle continuation at $h=7.8089\times10^{-7}$. The left and right panels show the scaled $X_1$--$X_3$ and $M_1$--$M_3$ projections. Colors encode $\beta_1$, and black diamonds mark points on the last computed period-two circle near $\beta_1=0.21550$.}
    \label{fig:adam-continued-circles-visible}
\end{figure}

At the larger learning rate $h=1.92\times10^{-6}$, we independently initialize trajectories at $\beta_1=0,0.01,0.02$. Their leading finite-time Lyapunov exponents are positive, and Figure~\ref{fig:adam-small-chaos} shows their geometry.

\begin{figure}[htbp]
\centering
\includegraphics[width=\linewidth]{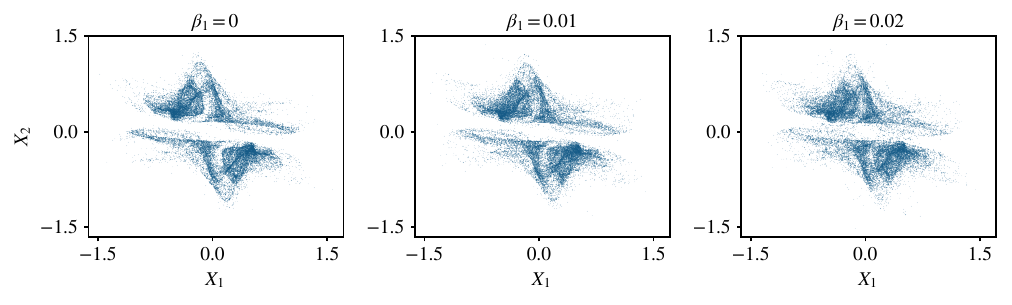}
\caption{Autonomous-Adam trajectories at $h=1.92\times10^{-6}$ and $\beta_1=0,0.01,0.02$, using the parameters of Figure~\ref{fig:adam-small-circle}. Trajectory settings are given in Appendix~\ref{app:adam-gallery}.}
\label{fig:adam-small-chaos}
\end{figure}

We vary $h$ and $\beta_1$ on a $195\times181$ grid, using the protocol of Section~\ref{sec:survey-classification} and the parameter ranges in Section~\ref{sec:survey-measurements}.

We classify $6{,}868$ samples as approaching the origin, $2{,}030$ as period two, $7{,}768$ as invariant circles or tori, and $18{,}629$ as chaotic (Figure~\ref{fig:adam-h-beta1-diagram}). These are finite-window classifications, subject to the limitations in Section~\ref{sec:survey-classification}. The dashed curve gives the stability boundary of the origin for epsilon-outside Adam without bias correction~\cite[Corollary III.5]{bock2021local},
\[
h=\frac{2\epsilon(1+\beta_1)}{(1-\beta_1)\lambda_{\max}(A)}.
\]
Appendix~\ref{app:adam-gallery} provides additional trajectories, circle projections, and the trajectory settings for panels E and F of Figure~\ref{fig:adam-h-beta1-diagram}.

\begin{figure}[!htbp]
\centering
\includegraphics[width=0.90\linewidth]{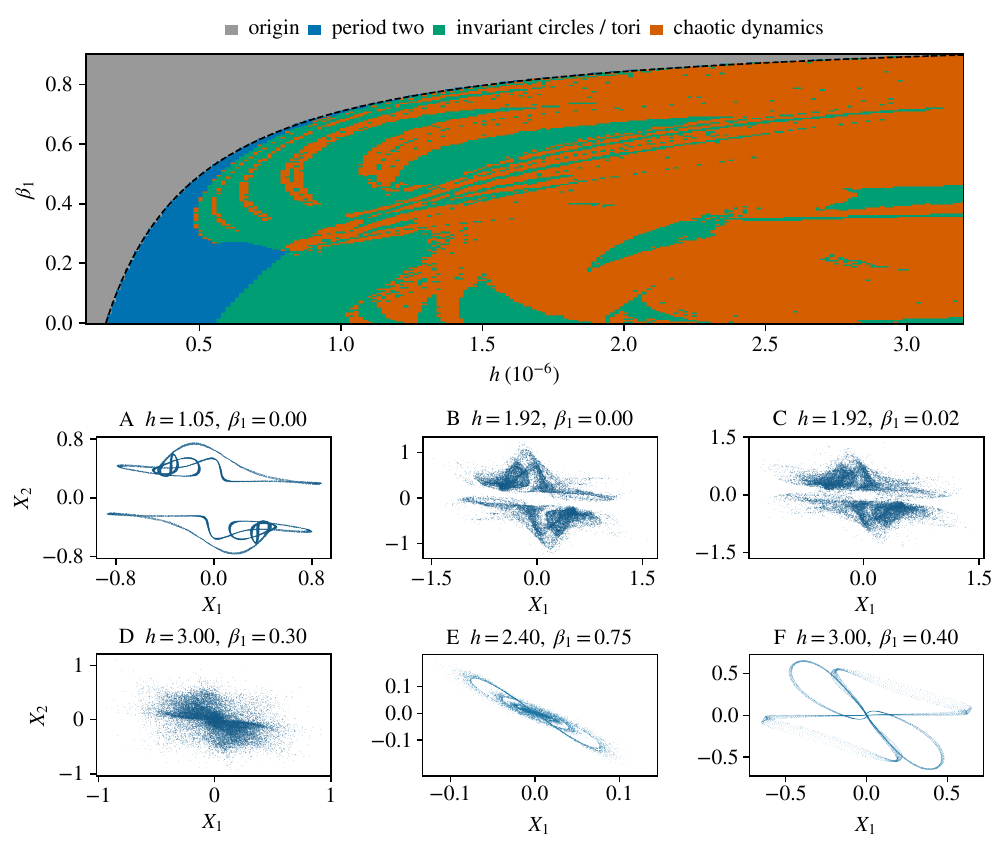}
    \caption{Autonomous Adam in the $(h,\beta_1)$ plane, with the matrix and other parameters of Figure~\ref{fig:adam-small-circle}. The dashed curve is the stability threshold of the origin. Panels A--F show $X_1$--$X_2$ projections at $(h/10^{-6},\beta_1)=(1.05,0),(1.92,0),(1.92,0.02),(3.00,0.30),(2.40,0.75),(3.00,0.40)$.}
    \label{fig:adam-h-beta1-diagram}
\end{figure}
\endgroup

\FloatBarrier

\section{Conclusion}

We begin with the question of what happens to RMSProp when the learning rate is too large for convergence to the minimizer. For positive-definite quadratic losses, we show that the origin is Lyapunov unstable whenever $h>h_*=2\epsilon/\lambda_{\max}(A)$. But the iterates do not simply escape to infinity. Regardless of the initial condition, they eventually enter a bounded region whose size we estimate explicitly. The bounds are proportional to $h$ in the spatial variables and to $h^2$ in the memory variables. Thus, even when RMSProp need not minimize the loss, we obtain explicit bounds on how close its iterates eventually remain to the minimizer.

The next question is what actually happens inside this region. We establish that period-two orbits exist precisely above the critical learning rate. Under additional spectral assumptions, we obtain an attracting branch emerging from the origin near $h_*$. At the other extreme, we find period-two orbits near admissible corners for sufficiently large $h$. These give two ways to begin looking for periodic motion, one near the stability threshold and the other at large learning rates. We use numerical continuation to explore how the resulting branches relate to one another.

Our numerical experiments show that period-two motion is only a small part of the picture. As we increase the learning rate, we find invariant circles, invariant tori, coexistence of attracting sets, and circle-doubling sequences. We also find trajectories with positive finite-time Lyapunov exponents, consistent with chaotic motion. Even for a quadratic loss with a unique minimizer, there is therefore quite a bit of dynamics to understand. Our parameter surveys show that the observed behavior depends on Hessian orientation, memory decay, and dimension. We also continue selected invariant circles into autonomous Adam, beginning from the RMSProp case $\beta_1=0$.

A more systematic study of the parameter space, with broader hyperparameter searches in arbitrary dimensions, could examine how often multistability and chaotic motion occur beyond the selected examples. The work of Mischaikow and collaborators on DSGRN~\cite{cummins2016combinatorial} suggests one possible starting point, since their framework describes recurrent sets and partitions parameter space according to qualitative dynamics.

Another direction is to study the numerically observed invariant circles and tori, their bifurcations, and the chaotic regimes using rigorous methods. Computer-assisted methods such as those developed by Tucker~\cite{tucker2002rigorous} and Wilczak~\cite{wilczak2010uniformly} provide examples of rigorous proofs of chaotic attractors and suggest approaches to selected examples in RMSProp.

Finally, the results raise questions about the optimization problem that motivates this study. Our learning-rate reduction example shows how decreasing $h$ moves a trajectory with persistent fluctuations toward the minimizer. Further work could examine how to use the bounds and the observed dynamics to choose when to reduce the learning rate. Other questions concern extending the instability and trapping arguments beyond quadratic losses and understanding transitions between neighborhoods of different minima.

\section*{Acknowledgments}

We used OpenAI Codex and Anthropic Claude to assist with mathematical derivations, developing and reviewing numerical code, preparing figures, and editing the manuscript. We also used Google Gemini to review manuscript wording and formatting. The authors assume responsibility for all content.

\appendix
\section{Fourier parameterization and Newton's method}
\label{app:fourier_newton}

For a period-two circle, we solve the coupled equations~\eqref{eq:invariant-circle-pair} for $G$ and study the dynamics and normal stability of each component using $P=G^2$. We represent each parameterization on the equally spaced angular grid $\theta_\ell=2\pi\ell/M$, $\ell=0,\ldots,M-1$, using its discrete Fourier modes. With either an even or an odd number of nodes, this gives
\begin{equation*}
K_j^{(M)}(\theta)=\operatorname{Re}\sum_{k\in I_M}\widehat K_{j,k}e^{\mathrm i k\theta},
\qquad I_M=\{-\lfloor M/2\rfloor,\ldots,\lceil M/2\rceil-1\}.
\end{equation*}
The coefficients are the discrete Fourier transform divided by $M$. Additionally, we impose a phase condition to remove the freedom to shift the angular variable:
\begin{equation*}
\Phi(K_1)=\langle K_1(0)-v_{\mathrm{ref}},\eta_{\mathrm{ref}}\rangle=0.
\end{equation*}
Here $v_{\mathrm{ref}}$ and $\eta_{\mathrm{ref}}$ are a chosen reference point and reference tangent vector, respectively, and the brackets denote the Euclidean inner product. This condition fixes the angular origin provided $\langle K_1'(0),\eta_{\mathrm{ref}}\rangle\ne0$ at the solution.

We initialize Newton's method from a trajectory or a previously computed parameterization. When starting from a trajectory, we estimate the rotation number and Fourier coefficients using weighted Birkhoff averages, following the references in Section~\ref{sec:parametrization_methods_stability}.

With these initial approximations, we truncate the Fourier series and apply Newton's method to~\eqref{eq:invariant-circle-pair} and the phase condition, solving for $\omega$ and the real values of $K_1$ and $K_2$ at the $M$ grid nodes. At each iteration, we use the Fourier representation to evaluate angular shifts and derivatives.

For a two-torus, we similarly use a two-angle Fourier parameterization $K:\mathbb T^2\to\mathbb R^6$ and solve
\begin{equation}
G^2(K(\theta_1,\theta_2))-K(\theta_1+\Omega_1,\theta_2+\Omega_2)=0,
\qquad (\theta_1,\theta_2)\in\mathbb T^2,
\label{eq:two-torus-invariance}
\end{equation}
with unknown rotation numbers $\rho_{P,j}=\Omega_j/(2\pi)\pmod1$, $j=1,2$, measured in turns per iteration of $P=G^2$. Here $\Omega_j$ is the angular advance per iteration of $P$, whereas $\omega$ for a circle is the advance per iterate of $G$. To fix the origins of both torus angles, we impose the two phase conditions
\begin{equation}
\Phi_j(K)=\frac{1}{(2\pi)^2}\int_{\mathbb T^2}
\left\langle K(\theta)-K_{\mathrm{ref}}(\theta),
\partial_{\theta_j}K_{\mathrm{ref}}(\theta)\right\rangle\,d\theta=0,
\qquad j=1,2,
\label{eq:two-torus-phase-conditions}
\end{equation}
where $K_{\mathrm{ref}}$ is a reference parameterization, $\theta=(\theta_1,\theta_2)$, and $\partial_{\theta_j}K_{\mathrm{ref}}$ is its derivative with respect to the $j$th angle. These phase conditions remove the two angular shifts when their derivatives with respect to those shifts form an invertible $2\times2$ matrix. We then apply Newton's method to~\eqref{eq:two-torus-invariance} and~\eqref{eq:two-torus-phase-conditions}, solving for the real values of $K$ at the grid nodes and the angular increments $\Omega_1,\Omega_2$. Once $K$ is computed, applying $G$ to $K(\mathbb T^2)$ gives the second component.

\subsection{Normal Lyapunov exponents}
\label{sec:normal-lyapunov-method}
We measure growth perpendicular to a circle or torus by removing the tangent directions at each iteration of $P=G^2$~\cite{jorba2001normal}. Let $K$ parameterize one component of a period-two circle or a torus pair. An orbit on this component satisfies
\[
z_j=K(\theta^{(j)}),\qquad z_{j+1}=P(z_j).
\]
Here $\theta^{(0)}$ is the initial angle for a circle or the pair of initial angles for a two-torus. The angular state $\theta^{(j)}$ advances by $2\omega$ for a circle and by $(\Omega_1,\Omega_2)$ for a two-torus at each iteration, with all angles taken modulo $2\pi$. The tangent vectors at $z_j$ are $K'(\theta^{(j)})$ for a circle and $\partial_{\theta_1}K(\theta^{(j)}),\partial_{\theta_2}K(\theta^{(j)})$ for a two-torus. We let the columns of $Q_j$ form an orthonormal basis for the space perpendicular to these vectors, so in $\mathbb R^6$ the matrix $Q_j$ has five columns for a circle and four for a two-torus. Applying $DP(z_j)$ and projecting onto the normal space at $z_{j+1}$ gives
\begin{equation}\label{eq:normal-quotient-cocycle}
B_j=Q_{j+1}^{\top}DP(z_j)Q_j.
\end{equation}
\begin{definition}[Normal Lyapunov exponents]\label{def:normal-lyapunov-exponents}
When the limits exist, we define the \textit{normal Lyapunov exponents} along the orbit by
\[
\lambda_{\perp,i}(z_0)=\lim_{N\to\infty}\frac{1}{N}
\log\sigma_i\!\left(B_{N-1}\cdots B_0\right).
\]
Here $\sigma_i$ denotes the $i$th largest singular value, and the normal Lyapunov exponents measure growth per iteration of $P$.
\end{definition}

We evaluate $K$ at successive angles to keep the calculation on the computed circle or torus, including when it is unstable. Following the orthonormalization method of Benettin et al.~\cite{benettin1980part2}, we use the QR iteration
\[
B_jU_j=U_{j+1}R_j,\qquad U_j^{\top}U_j=I,\qquad (R_j)_{ii}>0,
\]
where $R_j$ is upper triangular.

After discarding initial iterations to reduce dependence on the starting perturbation directions, we estimate the normal Lyapunov exponents over the next $N$ consecutive iterations of $P$:
\[
\begin{aligned}
\widehat\lambda_{\perp,i}^{\mathrm{unweighted}}
 &=\frac{1}{N}\sum_{j=0}^{N-1}\log(R_j)_{ii},\\
\widehat\lambda_{\perp,i}^{\mathrm{weighted}}
 &=\sum_{j=0}^{N-1}w_j\log(R_j)_{ii}.
\end{aligned}
\]
For the weighted estimates, we use the smooth weighting function of Das et al.~\cite{das2017quantitative}, evaluated at the midpoints
\[
s_j=\frac{j+1/2}{N},\qquad
w_j=\frac{\exp[-1/(s_j(1-s_j))]}
{\displaystyle\sum_{k=0}^{N-1}\exp[-1/(s_k(1-s_k))]},
\qquad j=0,\ldots,N-1.
\]
Section~\ref{sec:normal-lyapunov} gives the computational settings and compares estimates obtained with different iteration counts and initial points on the circles.

\section{Proof of the general trapping theorem}
\label{app:exterior-energy-trapping-proof}

\begingroup
\theoremstyle{plain}
\newtheorem*{trappingrestatement}{Theorem~\ref{thm:exterior-energy-trapping}}
\begin{trappingrestatement}[Trapping by exterior energy decrease; restated]
Let $F:\mathbb R^d\times\mathbb R_{\ge0}^d\to\mathbb R^d\times\mathbb R_{\ge0}^d$ and $V:\mathbb R^d\times\mathbb R_{\ge0}^d\to\mathbb R$ be continuous. Suppose $V$ is radially unbounded, namely
\begin{equation*}
V(z)\longrightarrow+\infty\quad\text{as }\|z\|\longrightarrow\infty,
\qquad z\in\mathbb R^d\times\mathbb R_{\ge0}^d.
\tag{\ref{eq:trapping-radial-unboundedness}}
\end{equation*}
Let $R\subset\mathbb R^d\times\mathbb R_{\ge0}^d$ be bounded, and assume
\begin{equation*}
V(F(z))<V(z)\qquad\text{for every }z\in(\mathbb R^d\times\mathbb R_{\ge0}^d)\setminus R.
\tag{\ref{eq:trapping-exterior-decrease}}
\end{equation*}
Then there exists $C_0>0$ such that, for every $C\ge C_0$, the sublevel set $\Omega_C=\{z\in\mathbb R^d\times\mathbb R_{\ge0}^d:V(z)\le C\}$ has the following properties:
\begin{enumerate}
\renewcommand{\labelenumi}{(\roman{enumi})}
\setlength{\itemsep}{2pt}
\item \textbf{Compactness.} $\Omega_C$ is compact.
\item \textbf{Containment of the region.} $R\subset\operatorname{int}_{\mathbb R^d\times\mathbb R_{\ge0}^d}(\Omega_C)$.
\item \textbf{Strict trapping.} $F(\Omega_C)\subset\operatorname{int}_{\mathbb R^d\times\mathbb R_{\ge0}^d}(\Omega_C)$. In particular, an orbit cannot leave $\Omega_C$ after entering.
\item \textbf{Finite-time entry.} For every $z_0\in\mathbb R^d\times\mathbb R_{\ge0}^d$, there exists an integer $N=N(z_0,C)\ge0$ such that $F^n(z_0)\in\Omega_C$ for all $n\ge N$.
\end{enumerate}
The interiors in parts (ii) and (iii) are relative to $\mathbb R^d\times\mathbb R_{\ge0}^d$, so neighborhoods are taken within this state space.
\end{trappingrestatement}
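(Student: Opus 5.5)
The plan is to choose the threshold $C_0$ in two stages, both using that the closure $\overline R$ is compact. Write $\mathcal X=\mathbb R^d\times\mathbb R_{\ge0}^d$, a closed subset of $\mathbb R^{2d}$.

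\textbf{Choice of $C_0$.} Since $V$ and $F$ are continuous, the numbers
\[
C_1=\sup_{\overline R}V,\qquad C_2=\sup_{\overline R}V\circ F
\]
are finite. Set $C_0=\max\{C_1,C_2\}+1$, which may be enlarged to make it positive.

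\textbf{Compactness, part (i).} By radial unboundedness, there is $r$ with $V>C$ whenever $\|z\|>r$. So $\Omega_C$ is bounded. It is closed in $\mathcal X$ as the preimage of a closed set under a continuous map, hence closed in $\mathbb R^{2d}$. Therefore $\Omega_C$ is compact.

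\textbf{Containment, part (ii).} For $z\in R$ we have $V(z)\le C_1<C$. The set $\{z\in\mathcal X: V(z)<C\}$ is relatively open by continuity and lies inside $\Omega_C$, so $R$ is contained in the relative interior.

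\textbf{Strict trapping, part (iii).} Again the set $\{V<C\}$ is relatively open and contained in $\Omega_C$. It therefore suffices to show $V(F(z))<C$ for every $z\in\Omega_C$. Split into two cases:
\begin{itemize}
\item If $z\in R$, then $V(F(z))\le C_2<C$.
\item If $z\notin R$, then \eqref{eq:trapping-exterior-decrease} gives $V(F(z))<V(z)\le C$.
\end{itemize}

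\textbf{Finite-time entry, part (iv).} This is the main obstacle. Exterior decrease alone only gives monotone decrease of $V$ along the orbit, and the decrements could in principle shrink to zero. Fix $z_0$ and argue by contradiction: suppose $z_n=F^n(z_0)\notin\Omega_C$ for all $n$.

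Then every $z_n\notin R$, because $R\subset\Omega_C$. Hence $V(z_n)$ is strictly decreasing, so $z_n$ lies in the compact set
\[
S=\{z\in\mathcal X: C\le V(z)\le V(z_0)\}.
\]

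The key step is to show the decrement is uniform on $S$. Let $S'=S\setminus\operatorname{int}_{\mathcal X}\{V<C_0'\}$ for a suitable intermediate level; the cleaner route is to choose $C_0$ with the extra property $\overline R\subset\{V<C_0-\tfrac12\}$. Then for $C\ge C_0$, the set $S$ is disjoint from $\overline R$, and on $S$ the continuous function $V-V\circ F$ is strictly positive. By compactness it has a positive minimum $\kappa$ on $S$. This forces $V(z_n)\le V(z_0)-n\kappa$, which eventually drops below $C$, a contradiction.

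So some $z_N$ lies in $\Omega_C$, and part (iii) keeps $z_n\in\Omega_C$ for all $n\ge N$.

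\textbf{Technical care.} The only delicate point is that $R$ need not be closed. Points of $\overline R\setminus R$ could lie in $S$ and fail the decrease condition, which is exactly why the levels are pushed strictly above $\sup_{\overline R}V$. Throughout, every interior and neighborhood is taken relative to $\mathcal X$, which is legitimate because all the sets involved are defined through continuous functions on $\mathcal X$.
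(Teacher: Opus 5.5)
Your proposal is correct and follows the paper's proof essentially step for step: $C_0$ is taken above the maxima of $V$ and $V\circ F$ on the compact set $\overline R$, strict trapping uses the same two cases, and finite-time entry uses the same positive minimum of $V-V\circ F$ on the compact shell $\{z: C\le V(z)\le V(z_0)\}$, which misses $R$. Two minor remarks: your choice $C_0=\max\{C_1,C_2\}+1$ already gives $\overline R\subset\{V<C_0-\tfrac12\}$, so the aside about an intermediate level is unnecessary; and points of $\overline R\setminus R$ pose no danger, since the decrease hypothesis holds on the whole complement of $R$ --- the closure is needed only to make $C_1,C_2$ finite.
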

\endgroup

\begin{proof}
We refer to $V$ as the energy and write $\overline R$ for the closure of $R$. Since $\mathbb R^d\times\mathbb R_{\ge0}^d$ is closed and $R$ is bounded, $\overline R$ is a compact subset of $\mathbb R^d\times\mathbb R_{\ge0}^d$. The composition $V\circ F$ is continuous because both $F$ and $V$ are continuous. If $R$ is nonempty, the extreme value theorem therefore gives finite maxima
\[
M_1=\max_{z\in\overline R}V(z),\qquad
M_2=\max_{z\in\overline R}V(F(z)).
\]
Choose any $C_0>\max\{0,M_1,M_2\}$. Then $C_0>0$ and, for every $z\in\overline R$,
\[
V(z)\le M_1<C_0,\qquad V(F(z))\le M_2<C_0.
\]
If $R$ is empty, choose any $C_0>0$. Fix $C\ge C_0$.

\emph{(i) Compactness.} By continuity of $V$, the sublevel set $\Omega_C=V^{-1}((-\infty,C])$ is closed relative to $\mathbb R^d\times\mathbb R_{\ge0}^d$. Since this state space is itself closed in $\mathbb R^{2d}$, $\Omega_C$ is closed in $\mathbb R^{2d}$. Radial unboundedness gives a radius $M>0$ such that $V(z)>C$ whenever $z\in\mathbb R^d\times\mathbb R_{\ge0}^d$ and $\|z\|>M$. Thus every $z\in\Omega_C$ satisfies $\|z\|\le M$, so $\Omega_C$ is bounded. It is therefore compact by the Heine--Borel theorem.

\emph{(ii) Containment of the bounded region.} The choice of $C_0$ gives $V(z)<C_0\le C$ for every $z\in R$, which proves $R\subset\{V<C\}$. Separately, continuity of $V$ makes $\{z\in\mathbb R^d\times\mathbb R_{\ge0}^d:V(z)<C\}$ open relative to $\mathbb R^d\times\mathbb R_{\ge0}^d$. This relatively open set is contained in $\Omega_C$, so each of its points is an interior point of $\Omega_C$ relative to the state space. Consequently,
\[
R\subset\{V<C\}\subset\operatorname{int}_{\mathbb R^d\times\mathbb R_{\ge0}^d}(\Omega_C).
\]
\emph{(iii) Strict trapping.} To prove that $F$ maps $\Omega_C$ into its relative interior, fix $z\in\Omega_C$ and distinguish two cases.

\emph{Case 1: $z\in R$.} The choice of $C_0$ gives
\[
V(F(z))<C_0\le C.
\]
No comparison between $V(F(z))$ and $V(z)$ is required here: the energy may increase inside $R$, but its value after one iterate remains below $C$.

\emph{Case 2: $z\in\Omega_C\setminus R$.} Since $z$ lies outside $R$, its energy decreases. Since $z\in\Omega_C$, we also have $V(z)\le C$. Thus
\[
V(F(z))<V(z)\le C.
\]
In both cases, $V(F(z))<C$. Therefore $F(\Omega_C)\subset\{V<C\}\subset\operatorname{int}_{\mathbb R^d\times\mathbb R_{\ge0}^d}(\Omega_C)$.

\emph{(iv) Finite-time entry.} Suppose an orbit $z_n=F^n(z_0)$ never enters $\Omega_C$. Its energy decreases, so it remains in the compact set $K=\{z\in \mathbb R^d\times\mathbb R_{\ge0}^d:C\le V(z)\le V(z_0)\}$. This set is disjoint from $R$. The continuous function $V-V\circ F$ is strictly positive on $K$ and therefore has a positive minimum $\delta$. Hence $V(z_n)\le V(z_0)-n\delta$. For sufficiently large $n$, this gives $V(z_n)<C$, contrary to our assumption. Every orbit therefore enters $\Omega_C$, and strict trapping ensures that it remains there.
\end{proof}

\begingroup
\section{Additional numerical methods}\label{app:additional-numerical-methods}
\subsection{Matrix used in the three-dimensional examples}
\label{sec:case-matrix}
For the fixed three-dimensional examples, we use the matrix below, with $\epsilon=10^{-7}$ and memory decay $\beta=0.98$ for RMSProp or $\beta_2=0.98$ for autonomous Adam.
\[
A=\frac{1}{144115188075855872}
\begin{pmatrix}
109836008153513888 & -13893213032827795 & 12762003043063982\\ -13893213032827795 & 161280689414189920 & 24193156135327376\\ 12762003043063982 & 24193156135327376 & 86288968860418720
\end{pmatrix}.
\]

\subsection{Continuation and numerical checks}
\label{sec:continuation-checks}
\paragraph{Following a branch.}
We vary $h$ for RMSProp or $\beta_1$ for Adam and apply Newton's method from the preceding solution or a linear extrapolation. Near a turning point, we align angular origins, scale the unknowns, and use pseudo-arclength continuation. To assess possible fold bifurcations, we reduce the step size, increase the Fourier resolution, and compare aligned solutions at the same parameter. A reversal in the direction of the continuation parameter, together with distinct solutions at a common parameter and agreement under these checks, provides numerical evidence of a fold bifurcation.

\paragraph{Circle checks.}
We evaluate the invariance equations~\eqref{eq:invariant-circle-pair} on angular grids with more points than the grid used by Newton's method, and offset the evaluation points to check accuracy between the original nodes. We define the \textit{invariance error} as the $\ell^\infty$ norm of this residual. For a circle computed with $M$ nodes, we use at least $\max(2048,4M)$ evaluation points. On each $L$-point evaluation grid, we sample at $\theta_\ell=2\pi(\ell+s)/L$, $\ell=0,\ldots,L-1$, for $s=\sqrt{2}/10$, $0.371$, and $(\sqrt{5}-1)/2$. Each value of $s$ moves all evaluation points by the fraction $s$ of the grid spacing $2\pi/L$. We then repeat the check on at least $\max(4096,4M)$ points.

For the primary period-two circle in Figure~\ref{fig:primary-crossing-composite}, the invariance error is below $5\times10^{-14}$. At $h=1.0514\times10^{-6}$, we use a Newton tolerance of $2\times10^{-11}$ and require an invariance error below $10^{-10}$.

As a check that we have used enough Fourier modes for each circle component, we require the sum of squared coefficient magnitudes for $|k|>\tfrac34\lfloor M/2\rfloor$ to be less than $10^{-8}$ of the sum over all nonzero frequencies. This tests whether the coefficients are small at frequencies close to the largest frequency, $\lfloor M/2\rfloor$. We define the tangent norm as the Euclidean length $\|K_j'(\theta)\|_2$ of the derivative of the circle parameterization with respect to $\theta$, in the full state space. For a regular parameterization, this derivative is nonzero, so we require its norm to exceed $10^{-8}$ at the sampled points. We also check that the memory coordinates remain positive.

\paragraph{Torus checks.}
For a two-torus, we evaluate the invariance equations~\eqref{eq:two-torus-invariance} at angles between the computational nodes. We also check the magnitudes of the Fourier coefficients near the largest frequencies in each angle, the linear independence of the two tangent vectors, and the positivity of the memory coordinates.

For the two-tori at $h=1.0514\times10^{-6}$, we use a Newton tolerance of $10^{-12}$ and obtain errors below $10^{-14}$ on the computational grids. For a torus computed on an $M_1\times M_2$ grid, we evaluate on both a $(2M_1+1)\times(2M_2+1)$ grid and a $3M_1\times3M_2$ grid. On each evaluation grid, we add the offsets $(0.371,0.237)$ and $(0.1414,0.618)$ to $(\theta_1,\theta_2)$, with angles measured in radians. Table~\ref{tab:fourier-settings} gives the computational grids and invariance errors on these denser, offset grids.

\begin{table}[H]
\centering\small
\caption{Fourier grids and errors measured on denser, offset grids at $h=1.0514\times10^{-6}$ for the circle and torus comparisons. Grid sizes count nodes, rather than Fourier mode indices. Circle invariance errors refer to~\eqref{eq:invariant-circle-pair}, and torus invariance errors to~\eqref{eq:two-torus-invariance}.}
\label{tab:fourier-settings}
\begin{tabular}{@{}lrr@{}}
\toprule
Object & Computational grid & Maximum error\\
\midrule
$C_{\mathrm{NS}}$ & $384$ & $1.5\times10^{-14}$\\
$C_0$ & $384$ & $7.3\times10^{-15}$\\
$C_1$ & $2048$ & $1.7\times10^{-14}$\\
$C_2$ & $2048$ & $2.4\times10^{-14}$\\
Attracting two-torus & $321\times81$ & $4.6\times10^{-13}$\\
Saddle two-torus & $257\times65$ & $2.5\times10^{-12}$\\
\bottomrule
\end{tabular}
\end{table}

\subsection{Distance to an invariant circle}
\label{sec:circle-distance}
We measure Euclidean distance in the full normalized phase space. For a state $Z=(X,W)$ and a period-two circle $C=K_1(\mathbb T)\cup K_2(\mathbb T)$, we use
\begin{equation}\label{eq:supp-circle-distance}
d(Z,C)=\min_{\substack{j\in\{1,2\}\\\theta\in[0,2\pi)}}\|Z-K_j(\theta)\|_2.
\end{equation}
We first search a dense angular grid on both circle components, then refine several candidate angles using the Fourier parameterization and retain the smallest distance. To check sensitivity to the angular grid, we repeat the calculation on a second grid.
\endgroup

\begingroup
\subsection{Survey measurements and longer checks}
\label{sec:survey-measurements}
We use the classification rule in~\eqref{eq:survey-classification} for all three surveys. Table~\ref{tab:survey-settings} gives the grids, discarded and measured lengths, recurrence window $m$, and map convention. Initial states follow the convention in Section~\ref{sec:numerical-parameters}. We compute the leading full-state Lyapunov exponent as described in Section~\ref{sec:trajectory-diagnostics}, reporting it per iterate of the map listed in the final column of Table~\ref{tab:survey-settings}.

\begin{table}[H]
\centering\footnotesize
\setlength{\tabcolsep}{4pt}
\caption{Survey grids and measurement settings. Discarded and measured lengths count iterates of the map in the last column. Recurrence uses the final $m$ measured states.}
\label{tab:survey-settings}
\begin{tabular}{@{}>{\raggedright\arraybackslash}p{0.12\linewidth}>{\raggedright\arraybackslash}p{0.39\linewidth}rrrl@{}}
\toprule
Survey & Grid & Discarded & Measured & $m$ & Map\\
\midrule
RMSProp & 193 equally spaced $h\in[0.10,3.00]\times10^{-6}$, plus $h=1.00,1.06,1.92\times10^{-6}$; 161 values of $\beta\in[0,0.999]$. & $50{,}000$ & $50{,}000$ & $128$ & $G$\\[5pt]
Adam & 193 equally spaced $h\in[0.10,3.20]\times10^{-6}$, plus $h=1.92,3.00\times10^{-6}$; 181 values of $\beta_1\in[0,0.90]$. & $50{,}000$ & $50{,}000$ & $128$ & $G_{\beta_1}$\\[5pt]
Hessian orientation & $111{,}600$ ZYX Euler-angle triples in $6^\circ$ increments. First and third angles range from $-180^\circ$ to $174^\circ$, and the second from $-90^\circ$ to $90^\circ$. & $50{,}000$ & $50{,}000$ & $1{,}024$ & $G$\\
\bottomrule
\end{tabular}
\end{table}
The first two surveys use the fixed matrix~\eqref{eq:case-matrix-rounded} and $\epsilon=10^{-7}$, with $\beta_2=0.98$ for Adam. The orientation survey varies the rotations about the $z$, $y$, and $x$ axes with eigenvalues $(0.5,0.8,1.18)$, $\beta=0.98$, $\epsilon=10^{-7}$, and $h=3\times10^{-6}$.

We selected the tolerances by comparing the classifications produced by candidate rules with those obtained using more post-transient iterations. For samples outside the four categories, we compute longer trajectories and search for higher-period orbits by comparing states separated by more than two iterates.

\paragraph{Bifurcation diagram.}
For the upper panel of Figure~\ref{fig:rmsprop-master-bifurcation-diagram}, before each run we perturb every spatial coordinate by an amount in $[-5\times10^{-9},5\times10^{-9}]$ and replace each initial memory coordinate $W_i$ by $\max\{W_i,10^{-8}\}$. For the middle panel, we initialize each trajectory independently from~\eqref{eq:canonical-initial-state}.

\subsection{Individual-trajectory Lyapunov exponent measurements}
\label{sec:individual-lyapunov}
Table~\ref{tab:individual-lyapunov-values} gives the leading full-state estimates per iterate of $G$ for RMSProp and per iterate of $G_{\beta_1}$ for Adam. Discarded and measured lengths use the map indicated in each row.

\begin{table}[H]
\centering\small
\caption{Leading full-state Lyapunov exponent estimates for the individual examples.}
\label{tab:individual-lyapunov-values}
\begin{tabular}{@{}p{0.40\linewidth}rrr@{}}
\toprule
Example & Discarded & Measured & Estimate\\
\midrule
RMSProp, $d=3$ (Figure~\ref{fig:chaotic-attractor-1.06}) & $250{,}000P$ & $50{,}000P$ & $4.39\times10^{-3}$\\
RMSProp, $d=2$ (purple, Figure~\ref{fig:low-dimensional-bifurcation-clouds}) & $500{,}000G$ & $20{,}000G$ & $4.824\times10^{-2}$\\
Coupled RMSProp, $d=4$ & $55{,}000G$ & $20{,}000G$ & $6.74\times10^{-3}$\\
Coupled RMSProp, $d=5$ & $55{,}000G$ & $20{,}000G$ & $7.26\times10^{-3}$\\
Adam, $\beta_1=0$ & $50{,}000G_{\beta_1}$ & $50{,}000G_{\beta_1}$ & $2.52\times10^{-3}$\\
Adam, $\beta_1=0.01$ & $50{,}000G_{\beta_1}$ & $50{,}000G_{\beta_1}$ & $6.06\times10^{-3}$\\
Adam, $\beta_1=0.02$ & $50{,}000G_{\beta_1}$ & $50{,}000G_{\beta_1}$ & $5.83\times10^{-3}$\\
\bottomrule
\end{tabular}
\end{table}
The three-dimensional and coupled RMSProp examples use $h=1.06\times10^{-6}$, with $\eta=0.10$ in dimensions four and five. Each coupled estimate comes from one trajectory initialized as in Section~\ref{sec:coupled-extensions}, with all entries of its initial perturbation vector equal to $1/\sqrt{2d}$. The two-dimensional example uses $\rho=15$, $\beta=0.5$, and $\epsilon=10^{-7}$. The Adam examples use $h=1.92\times10^{-6}$.

\subsection{Normal Lyapunov exponent measurements}
\label{sec:normal-lyapunov}
At $h=1.0514\times10^{-6}$, the leading normal Lyapunov exponent estimates for $C_0$ and the two-tori are close to zero. We therefore compare weighted and unweighted estimates after different numbers of measured iterations, using two initial points on each circle and one on each torus. Table~\ref{tab:normal-lyapunov-settings} gives the settings for the method in Appendix~\ref{app:fourier_newton}.

\begin{table}[H]
\centering\small
\caption{Settings for the normal Lyapunov-exponent comparisons at $h=1.0514\times10^{-6}$. The Discarded and Measured columns give numbers of iterations of $P$.}
\label{tab:normal-lyapunov-settings}
\begin{tabular}{@{}lrrr@{}}
\toprule
Object & Interpolation grid & Discarded & Measured\\
\midrule
Circle & $16{,}384$ & $8{,}192$ & $65{,}536$ and $131{,}072$\\
Two-torus & $2048\times256$ & $8{,}192$ & $131{,}072$ and $262{,}144$\\
\bottomrule
\end{tabular}
\end{table}
{\small\noindent We start the circle calculations at $K(0)$ and $K(0.713)$, with angles in radians, and each torus calculation at $K(0,0)$. We evaluate the parameterizations and tangent vectors by periodic cubic interpolation on the grids in Table~\ref{tab:normal-lyapunov-settings}, checking selected torus values against direct Fourier evaluation.\par}

We count positive and negative weighted normal Lyapunov exponent estimates using a threshold of $10^{-7}$ per iteration of $P$. We estimate where $C_0$ loses normal stability by tracking the mean of its two leading normal Lyapunov exponents.

When we increase the number of measured iterations or change the initial point on a circle, we find that the weighted estimates retain their signs, while some unweighted estimates change sign (Table~\ref{tab:normal-lyapunov-checks}). With the larger number of measured iterations, the attracting and saddle tori have leading weighted normal Lyapunov exponents $-2.289\times10^{-6}$ and $+1.178\times10^{-6}$ per iteration of $P$, respectively.

\begin{table}[H]
\centering\small
\caption{Ranges of normal Lyapunov exponent estimates at $h=1.0514\times10^{-6}$, in units of $10^{-6}$ per iteration of $P$. The $C_0$ row includes its two leading normal Lyapunov exponents at two initial angles and both iteration counts. Each torus row includes its leading normal Lyapunov exponent at one pair of initial angles and both iteration counts.}
\label{tab:normal-lyapunov-checks}
\begin{tabular}{@{}lrr@{}}
\toprule
Object & Weighted average & Unweighted average\\
\midrule
$C_0$ & $[-1.227,-1.222]$ & $[-18.762,32.334]$\\
Attracting two-torus & $[-2.289,-2.285]$ & $[-4.712,0.186]$\\
Saddle two-torus & $[1.1779,1.1781]$ & $[0.9516,4.2077]$\\
\bottomrule
\end{tabular}
\end{table}

\begin{samepage}
\subsection{Higher-period motion}
\label{sec:higher-period-motion}
We use longer recurrence checks to find periodic motion missed by the fixed-point and period-two tests. For a proposed period $p$, we refine all consecutive states $z_0,\ldots,z_{p-1}$ together by solving the multiple-shooting equations
\begin{equation}\label{eq:multiple-shooting-residual}
\begin{aligned}
\mathcal R_p(z_0,\ldots,z_{p-1})
 &=\bigl(G(z_j)-z_{j+1}\bigr)_{j=0}^{p-1},\qquad z_p=z_0,\\
\mathcal R_p&=0.
\end{aligned}
\end{equation}

For Adam, we replace $G$ with $G_{\beta_1}$. We measure the residual in the $\ell^\infty$ norm,
\[
\|\mathcal R_p\|_{\ell^\infty}
=\max_{0\le j<p}\max_i\left|\bigl(G(z_j)-z_{j+1}\bigr)_i\right|.
\]
We also check proper divisors of $p$ to distinguish the reported period from a shorter one. Table~\ref{tab:survey-periodic-shooting} lists the six refined orbits, of which the first three are the magenta samples in Figure~\ref{fig:rmsprop-h-beta-diagram}. We find the other three through longer recurrence searches on samples flagged by the survey criteria.

\begin{table}[H]
\centering
\caption{Numerically refined higher-period orbits. Periods count iterates of $G$ for RMSProp and iterates of $G_{\beta_1}$ for Adam. The error is $\|\mathcal R_p\|_{\ell^\infty}$ for the residual in~\eqref{eq:multiple-shooting-residual}.}
\label{tab:survey-periodic-shooting}
\begin{tabular}{@{}lrrrr@{}}
\toprule
Map & $h/10^{-6}$ & $\beta$ or $\beta_1$ & Period & Error\\
\midrule
RMSProp & 2.652604167 & 0.93031875 & 108 & $6.66\times10^{-16}$\\
RMSProp & 2.637500000 & 0.93031875 & 214 & $6.66\times10^{-16}$\\
RMSProp & 2.199479167 & 0.93031875 & 1,814 & $8.88\times10^{-16}$\\
RMSProp & 1.920000000 & 0.93656250 & 4,706 & $7.77\times10^{-16}$\\
Adam & 1.343229167 & 0.43500000 & 6,455 & $7.77\times10^{-16}$\\
Adam & 2.150520833 & 0.65500000 & 7,077 & $3.89\times10^{-16}$\\
\bottomrule
\end{tabular}
\end{table}
\end{samepage}

\FloatBarrier
\endgroup

\section{Additional bifurcation observations}
\label{app:asym-numerical-inventory}

Table~\ref{tab:asym-numerical-inventory} records the sequences and invariant objects underlying the examples in Section~\ref{sec:bifurcation-analysis}. Here $A$ is given by~\textup{(\ref{eq:case-matrix-rounded})}, and, unless stated otherwise, we report Lyapunov exponents per iteration of $P=G^2$ and count iterations of $P$.  For the $C_{\mathrm{NS}}$ doubling candidates, we estimate the transition locations from sign changes of the leading normal Lyapunov exponent.

\begingroup\footnotesize
\setlength{\LTpre}{6pt}\setlength{\LTpost}{6pt}
\begin{longtable}{@{}>{\raggedright\arraybackslash}p{0.24\linewidth}>{\raggedright\arraybackslash}p{0.21\linewidth}>{\raggedright\arraybackslash}p{0.48\linewidth}@{}}
\caption{Observed invariant objects and transition diagnostics for $A$. Parameter values are in units of $10^{-6}$. An interval lists the lower and upper sampled learning rates between which we observe the reported change.}
\label{tab:asym-numerical-inventory}\\
\toprule
Object or sequence & $10^6h$ & Observation \\
\midrule
\endfirsthead
\multicolumn{3}{@{}l}{\tablename~\thetable\ continued}\\
\toprule
Object or sequence & $10^6h$ & Observation \\
\midrule
\endhead
\bottomrule
\endfoot
Primary period-two orbit and $C_{\mathrm{NS}}$ & $0.55222246$ & A conjugate pair of Floquet multipliers crosses the unit circle, suggesting a Neimark--Sacker bifurcation. We compute $C_{\mathrm{NS}}$ above the crossing.\\
First $C_{\mathrm{NS}}$ doubling candidate & near $0.99245872$ & A 2-fold circle is computed at $0.9950$.\\
Second doubling candidate & $[0.997750,0.997752]$ & A 4-fold circle approaches the 2-fold branch.\\
Third doubling candidate & $[1.0013690,1.0013695]$ & 8-fold circles approach the 4-fold branch.\\
Fourth doubling candidate & $[1.00175,1.00180]$ & A 16-fold circle is computed at $1.001845$.\\
Fifth doubling candidate & $[1.001839,1.0018425]$ & 32-fold circles are computed at $1.001845$ and $1.0018475$.\\
Sixth doubling candidate & $[1.001845,1.0018475]$ & We observe a sign change in the leading normal Lyapunov exponent and numerical evidence of a non-orientable leading normal line bundle.\\
Coexisting period-two circles & $1.0514$ & $C_0,C_1,C_2$ are attracting. $C_{\mathrm{NS}}$ has one unstable normal direction.\\
$C_0$ and two-tori & $1.0514$; $[1.05147230,1.05147240]$ & Attracting and saddle tori coexist with $C_0$. The mean of the two leading normal Lyapunov exponents of $C_0$ changes sign across the listed interval.\\
Chaotic trajectories & $1.0536,1.0539$ & Positive Lyapunov exponent estimates persist after $2\times10^8$ additional iterations of $P$.\\
Additional circle candidates & $1.055$ & Eight computed period-two circles, including pairs on both sides of numerical fold bifurcations and a 2-fold circle candidate in the $C_2$ family.\\
Chaotic trajectories & $1.06$ & A leading Lyapunov exponent estimate of $4.39\times10^{-3}$ per iterate of $G$ for the initial state in \eqref{eq:canonical-initial-state}.\\
Later circle-doubling sequence & $1.950,1.953,1.960,1.96150$ & Trajectories with $q=3,6,12,24$ strands. See Figure~\ref{fig:second-locking-cascade}.\\
\end{longtable}
\endgroup

\subsection{Additional doubling observations in the \texorpdfstring{$C_{\mathrm{NS}}$}{CNS} family}

In Figure~\ref{fig:cns-additional-doubling}, we compare four computed circles in the $C_{\mathrm{NS}}$ family at the same learning rate, showing the same enlarged region for every circle. Each displayed $q$-fold component is one closed curve.

We compute distinct 16-fold and 32-fold circles at $h=1.001845\times10^{-6}$. Continuing the 32-fold circle to $h=1.0018475\times10^{-6}$ changes the sign of its estimated leading normal Lyapunov exponent, as shown in Table~\ref{tab:cns-perturbation-check}. At the latter learning rate, we find numerical evidence of a non-orientable leading normal line bundle. A unit vector chosen continuously along this bundle returns with the opposite sign after one circuit of each component. Together, these observations suggest another circle doubling.

We also check whether nearby trajectories approach the 32-fold circle on either side of the sign change. At each learning rate, we follow 16 states displaced by $10^{-7}$ from the circle, perpendicular to its tangent, for 16,384 iterations of $P$. We calculate their final distances using~\eqref{eq:supp-circle-distance} and the angular refinement in Section~\ref{sec:circle-distance}. The distances agree closely on the two evaluation grids. We compare the final distances at the two learning rates in Table~\ref{tab:cns-perturbation-check}.

\begin{table}[H]
\centering\small
\caption{Normal Lyapunov exponent estimates and final distances for the 32-fold circle. Estimates are per iteration of $P$. Distances refer to all 16 perturbed states after 16,384 iterations of $P$.}
\label{tab:cns-perturbation-check}
\begin{tabular}{@{}rrl@{}}
\toprule
$h/10^{-6}$ & Leading estimate & Final distances (all 16)\\
\midrule
$1.001845$ & $-6.95\times10^{-4}$ & At most $5.45\times10^{-9}$\\
$1.0018475$ & $+4.98\times10^{-4}$ & About $3\times10^{-6}$ or more\\
\bottomrule
\end{tabular}
\end{table}

\begin{figure}[!htbp]
\centering
\includegraphics[width=0.95\linewidth]{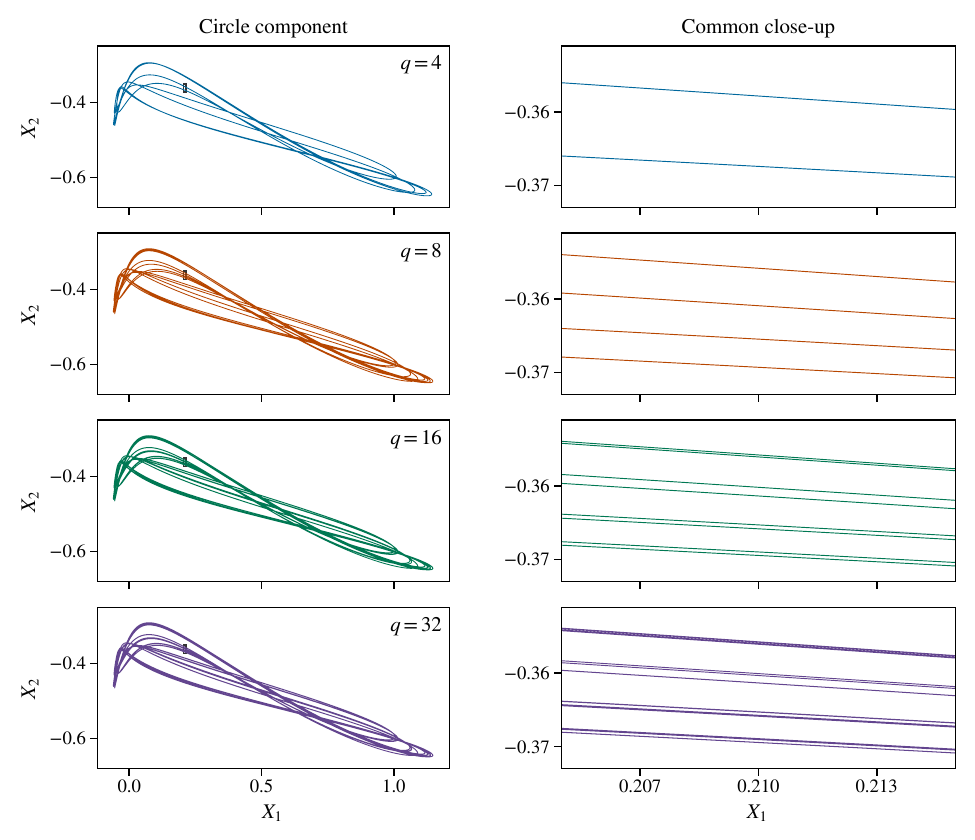}
\caption{Additional doubling observations in the $C_{\mathrm{NS}}$ family at $h=1.001845\times10^{-6}$. The left column shows Fourier parameterizations of the $q=4,8,16,32$ circle components in the $X_1$--$X_2$ projection. The right column enlarges the same boxed region with identical axis limits in every row.}
\label{fig:cns-additional-doubling}
\end{figure}

\captionsetup{skip=5pt}
\makeatletter\setlength{\@fptop}{0pt}\setlength{\@fpsep}{12pt}\setlength{\@fpbot}{0pt plus 1fil}\makeatother
\FloatBarrier
\begin{samepage}
\section{Additional numerical figures}\label{app:additional-figures}
Iterates alternate between the two components of each circle or torus pair, and $P=G^2$ maps each component to itself.
\end{samepage}
\begin{figure}[!htbp]
    \centering
    \includegraphics[width=0.95\linewidth]{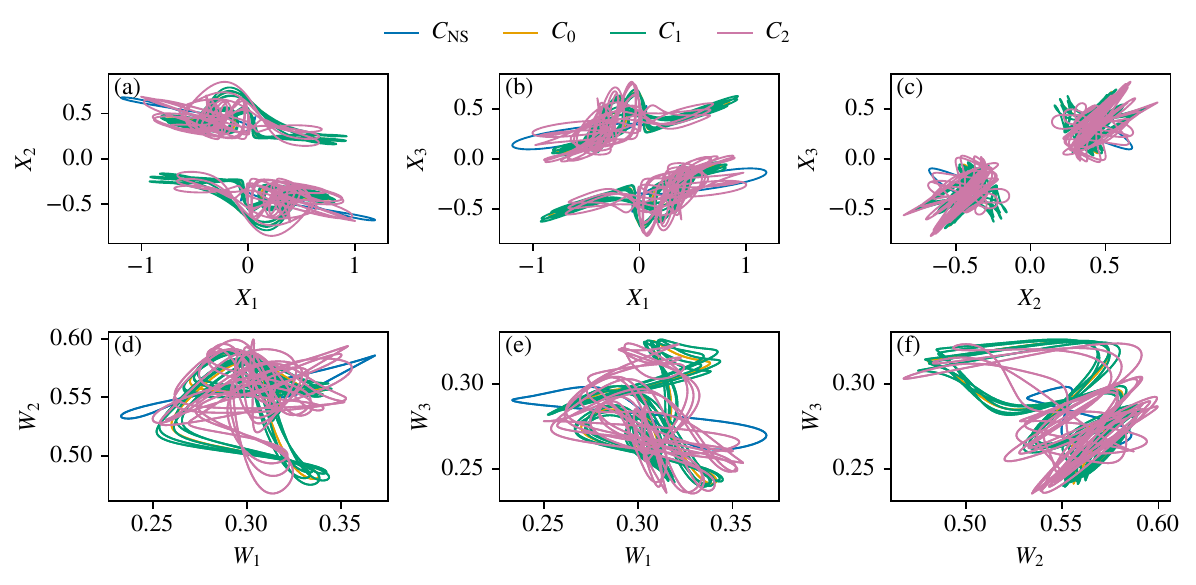}
    \caption{Four distinct period-two circles at $h=1.0514\times10^{-6}$. The upper row shows the $X_1$--$X_2$, $X_1$--$X_3$, and $X_2$--$X_3$ projections; the lower row shows $W_1$--$W_2$, $W_1$--$W_3$, and $W_2$--$W_3$. We show both components of the attracting circles $C_0,C_1,C_2$ and the saddle circle $C_{\mathrm{NS}}$.}
    \label{fig:same-plot-circles}
\end{figure}
\begin{figure}[!htbp]
    \centering
    \includegraphics[width=0.95\linewidth]{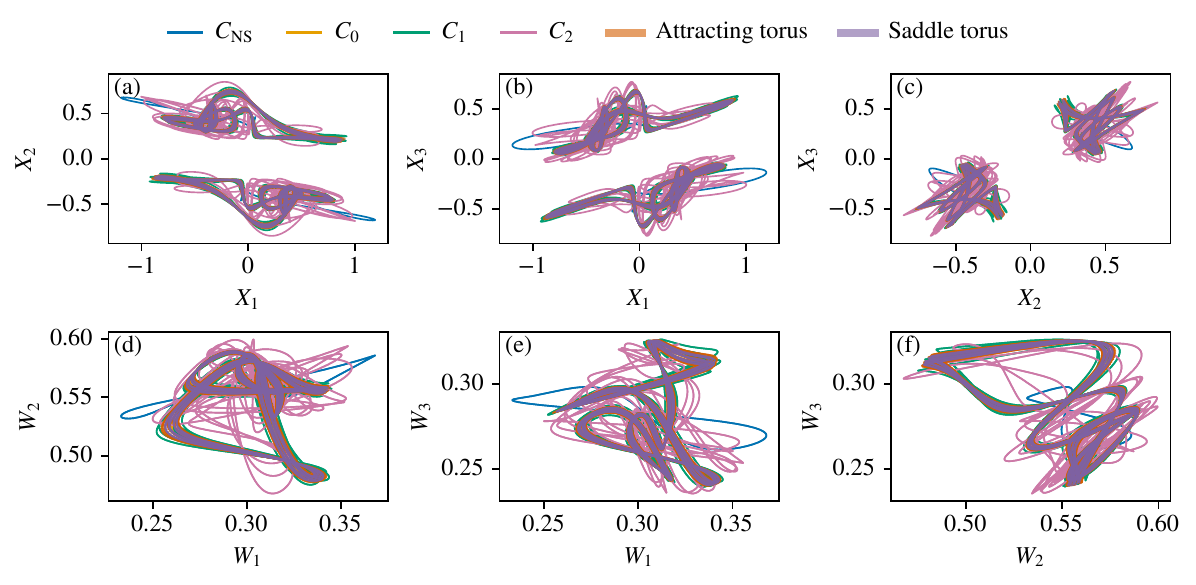}
    \caption{Four period-two circles and two invariant-torus pairs coexist at $h=1.0514\times10^{-6}$. We use the same six projections, axis bounds, and circle colors as in Figure~\ref{fig:same-plot-circles}, and add shaded points for the attracting and saddle tori. We show both components of every circle and torus pair. The tori are difficult to distinguish at this scale, so Figure~\ref{fig:c0-candidate-torus} shows their cross-sections.}
    \label{fig:six-coexisting-sets}
\end{figure}

\begin{figure}[!htbp]
    \centering
\includegraphics[width=0.95\linewidth]{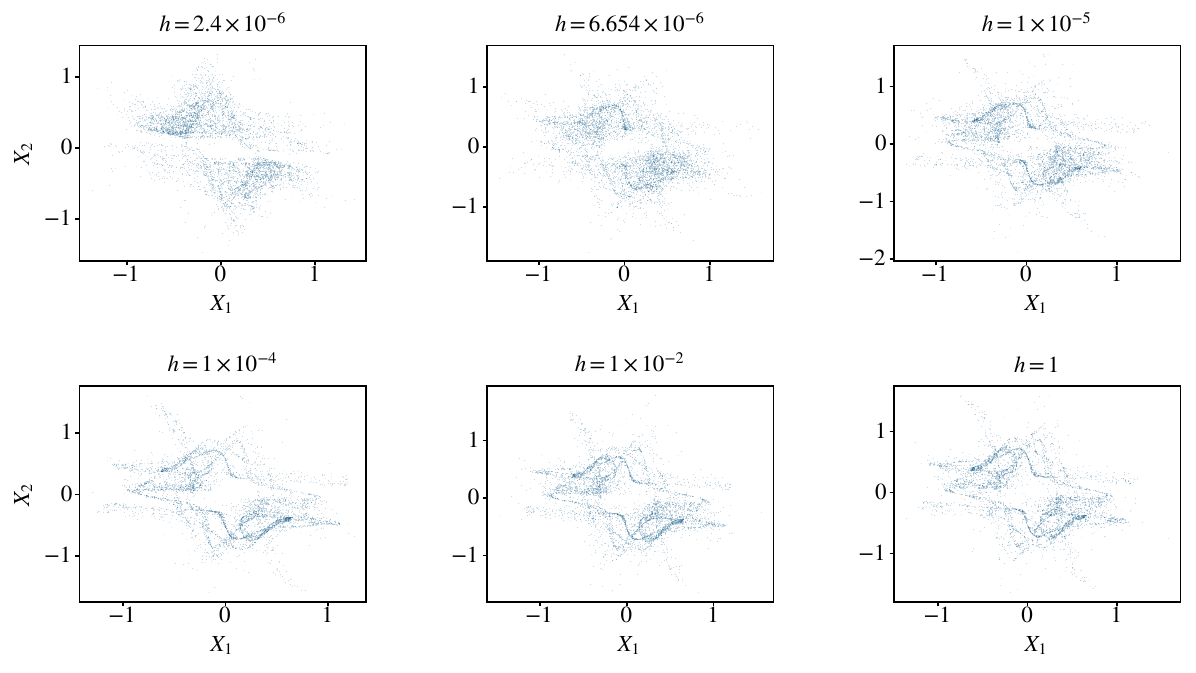}
    \caption{At large $h$, the trajectories shown occupy approximately the same spatial scale in normalized coordinates. Panels show $X_1$--$X_2$ projections at $h\in\{2.4\times10^{-6},6.654\times10^{-6},10^{-5},10^{-4},10^{-2},1\}$. For each panel, we discard $100{,}000$ iterates of $G$ and compute the next $50{,}000$ states. We plot every fourth state from the final $20{,}000$.}
    \label{fig:high-h-clouds}
\end{figure}
\clearpage
\subsection{Epsilon-inside comparison}
For the epsilon-inside map $G_I$, we use $h_{*,I}=2\sqrt{\epsilon}/\lambda_{\max}(A)$ and $\rho_I=h/h_{*,I}$. As we increase the learning rate for the same matrix $A$, we find invariant circles near $\rho_I=2$ and chaotic dynamics near $\rho_I=4$.
\begin{figure}[!htbp]
    \centering
    \includegraphics[width=\linewidth]{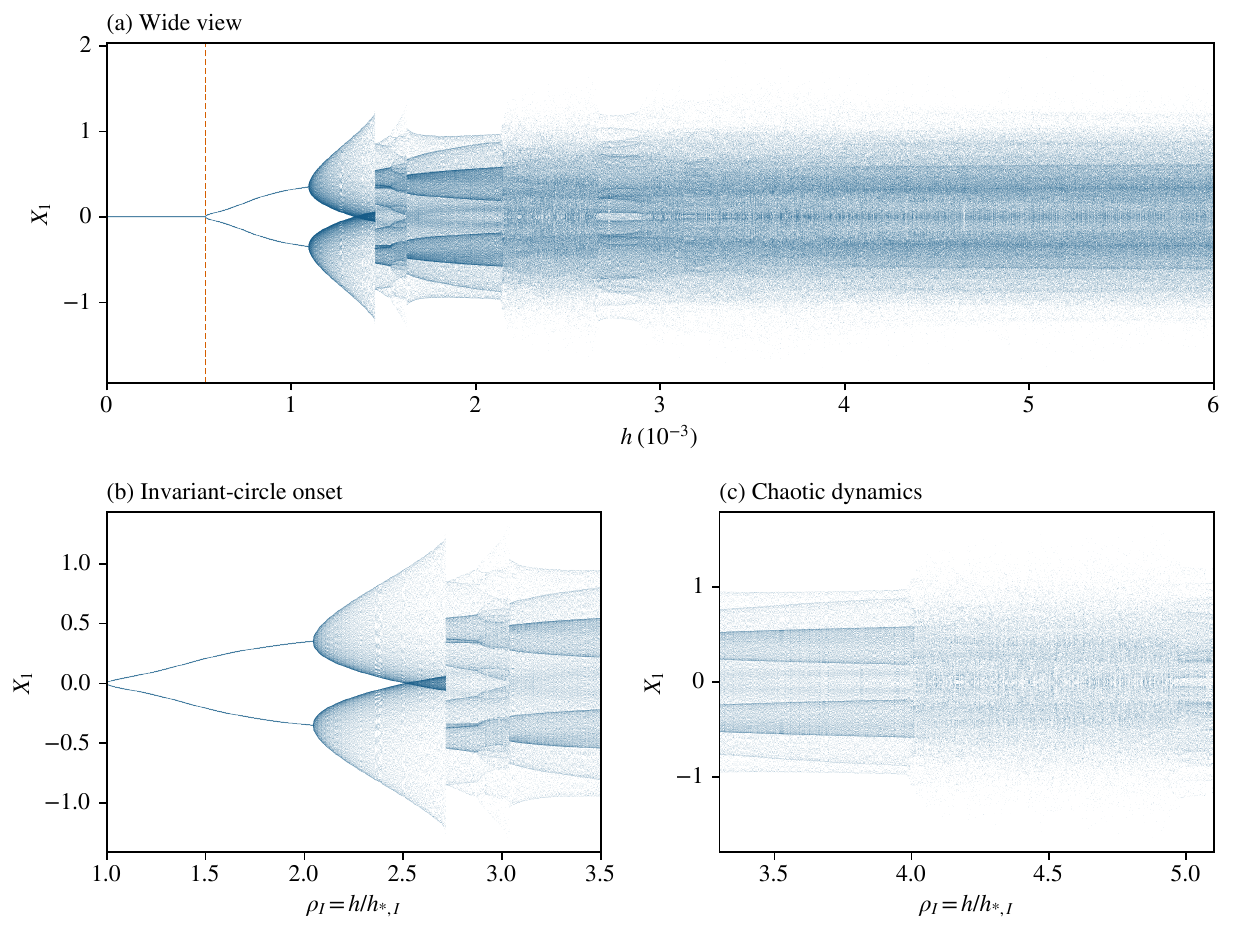}
    \caption{Bifurcation diagram for the normalized epsilon-inside map $G_I$ at the parameters in~\eqref{eq:canonical-parameters}. Panel~(a) uses a linear axis for $h$ beginning at zero and marks the instability threshold $h_{*,I}=2\sqrt{\epsilon}/\lambda_{\max}(A)$. Panels~(b) and (c) enlarge the period-two-to-invariant-circle transition and the appearance of chaotic motion using $\rho_I=h/h_{*,I}$. At each of the 9,999 parameter values, we continue from the preceding final state with a deterministic perturbation of amplitude $10^{-8}$, discard $50{,}000$ iterates of $G_I$, and plot the next 150 values of $X_1$.}
    \label{fig:epsilon-inside-bifurcation-atlas}
\end{figure}
\clearpage
\section{Additional RMSProp trajectories}\label{app:orbit-gallery}
In the six examples below, we plot bounded trajectories with positive leading finite-time Lyapunov exponents after discarding the initial transient.
\begin{figure}[!htbp]
\centering
\includegraphics[width=\linewidth]{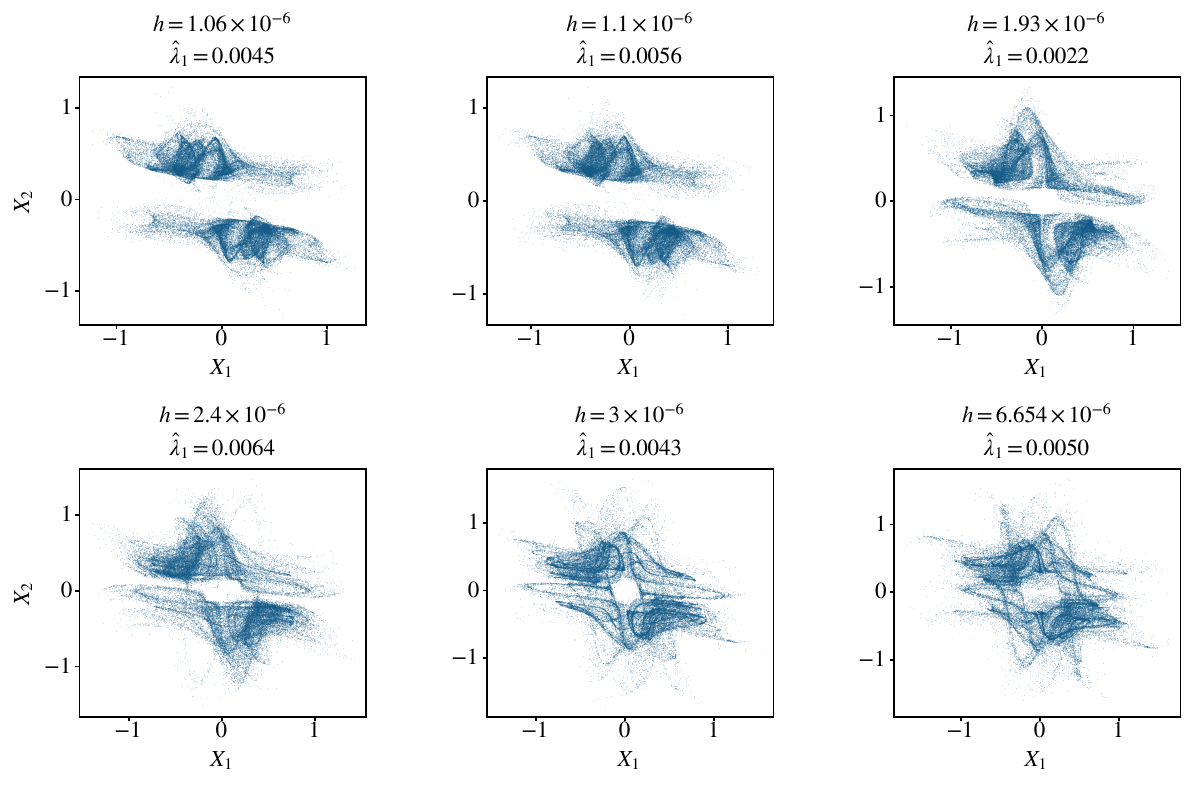}
\caption{RMSProp chaotic-dynamics gallery in the $X_1$--$X_2$ plane for $A$, $\beta=0.98$ and $\epsilon=10^{-7}$. For each panel, we start at $X=(0.3,0.4,-0.3)$, $W=(0.2,0.2,0.2)$, discard 750,000 iterates of $G$, and plot the next 50,000 iterates of $G$. The leading finite-time Lyapunov exponent (FTLE) above each panel is per iterate of $G$.}
\label{fig:strange-gallery}
\end{figure}
\clearpage
\section{Additional autonomous-Adam plots}\label{app:adam-gallery}
\captionsetup{justification=raggedright,singlelinecheck=false}
\subsection{Additional circle continuations}
\label{sec:additional-continuation-results}
We continue autonomous-Adam period-two circles in $\beta_1$ at $h=6.30261051\times10^{-7}$ and $7.80889203\times10^{-7}$. Table~\ref{tab:adam-last-circles} lists the last pair computed on each branch. For one component, we define the \textit{spatial amplitude} as the largest coordinate range along the circle, $\max_i(\max_\theta X_i(\theta)-\min_\theta X_i(\theta))$. We measure the invariance error for $G_{\beta_1}^2$ on finer, shifted angular grids.

\begin{table}[H]
\centering\small
\caption{Last computed period-two circles on two autonomous-Adam continuation branches. The invariance errors are measured for the second iterate $G_{\beta_1}^2$.}
\label{tab:adam-last-circles}
\begin{tabular}{@{}rrrr@{}}
\toprule
$h/10^{-7}$ & Last $\beta_1$ & Spatial amplitude & Invariance error\\
\midrule
6.302611 & 0.08591319 & $1.56\times10^{-6}$ & $4.52\times10^{-12}$\\ 7.808892 & 0.21550406 & $1.93\times10^{-6}$ & $9.44\times10^{-15}$\\
\bottomrule
\end{tabular}
\end{table}
At the smaller learning rate, the circles approach the period-two orbit near $\beta_1\approx0.085913$. In separate period-two calculations, we find a conjugate pair of Floquet multipliers crossing the unit circle near $\beta_1=0.08591318628$ at $h=6.30261051\times10^{-7}$ and near $\beta_1=0.21550405864$ at $h=7.80889203\times10^{-7}$. Figure~\ref{fig:adam-extra-continuation} gives further spatial and momentum projections.

\begin{figure}[!htbp]
\centering
\includegraphics[width=0.95\linewidth]{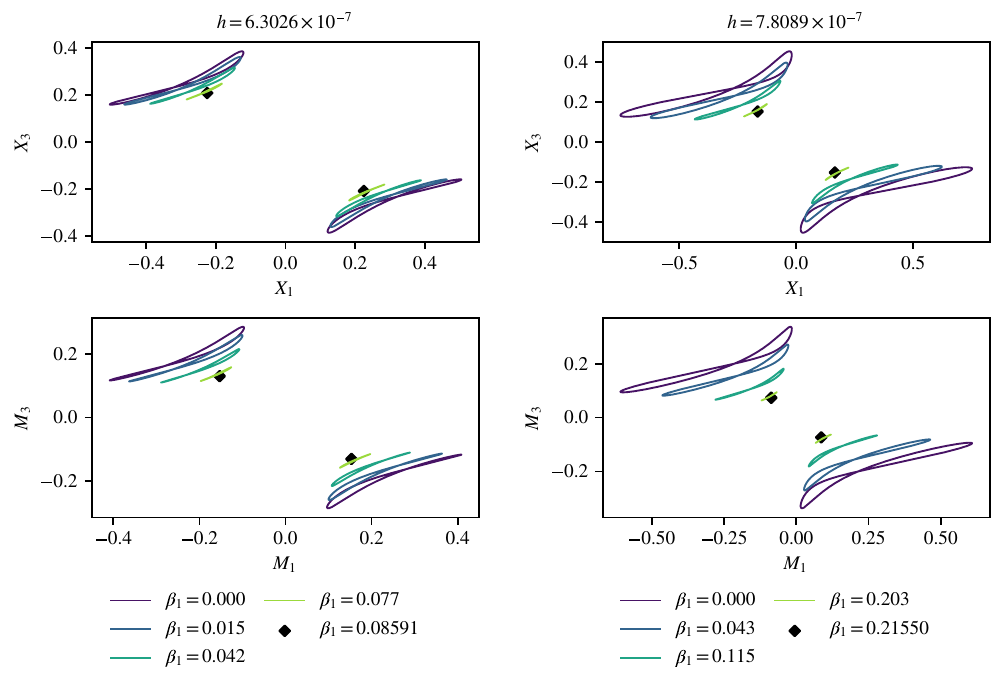}
\caption{Additional $X$ and $M$ projections of two autonomous-Adam circle continuations. The learning rates are $h=6.3026\times10^{-7},7.8089\times10^{-7}$, with $\beta_2=0.98$ and $\epsilon=10^{-7}$. Color denotes $\beta_1$, and black diamonds mark points on the last period-two circle obtained on each branch, near $\beta_1=0.085913$ and $0.21550$, respectively.}
\label{fig:adam-extra-continuation}
\end{figure}
\FloatBarrier

\clearpage
\begin{samepage}
\subsection{Trajectory examples}
\label{sec:adam-trajectory-examples}
The fixed-learning-rate trajectory figures below, Figure~\ref{fig:adam-small-chaos}, and panels E and F of Figure~\ref{fig:adam-h-beta1-diagram} use $A$, $\beta_2=0.98$, $\epsilon=10^{-7}$, and the scaled state $(X,M,W)$. Initial states follow Section~\ref{sec:numerical-parameters}. Table~\ref{tab:adam-trajectory-settings} gives the trajectory lengths.

\begin{table}[H]
\centering\small
\caption{Trajectory lengths for the fixed-learning-rate Adam trajectory figures. All lengths count iterates of $G_{\beta_1}$.}
\label{tab:adam-trajectory-settings}
\begin{tabular}{@{}lrr@{}}
\toprule
Learning rate & Discarded & Computed after discard\\
\midrule
$h=1.05\times10^{-6}$ & $100{,}000$ & $50{,}000$\\
Other learning rates & $100{,}000$ & $240{,}000$\\
\bottomrule
\end{tabular}
\end{table}
\end{samepage}

For both trajectory lengths in Table~\ref{tab:adam-trajectory-settings}, we plot two consecutive states out of every four from the final 50,000 states, retaining both even and odd iterates. The finite-time Lyapunov exponent estimates use different numbers of discarded and measured iterations, given in Table~\ref{tab:individual-lyapunov-values} for the individual examples and in the caption of Figure~\ref{fig:supp-adam_fixed_h1p05_beta1_ftle} for the $\beta_1$ sweep.
\begin{figure}[!htbp]
    \centering
    \includegraphics[width=0.95\linewidth]{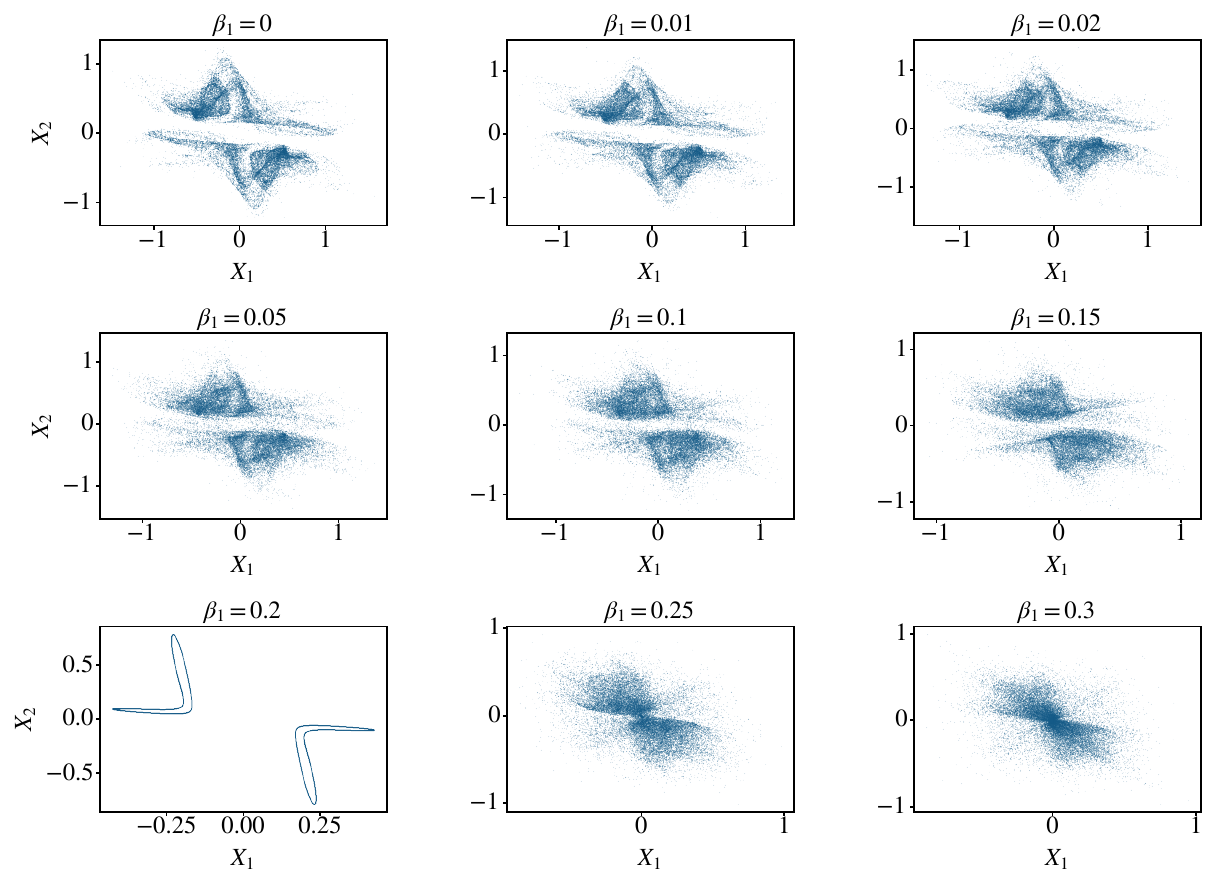}
    \caption{Spatial projections of autonomous-Adam trajectories at $h=1.92\times10^{-6}$ and $\beta_1\in\{0,0.01,0.02,0.05,0.10,0.15,0.20,0.25,0.30\}$.}
    \label{fig:adam_fixed_h1p92_beta1_node_gallery}
\end{figure}
\begin{figure}[!htbp]
    \centering
    \includegraphics[width=0.90\linewidth]{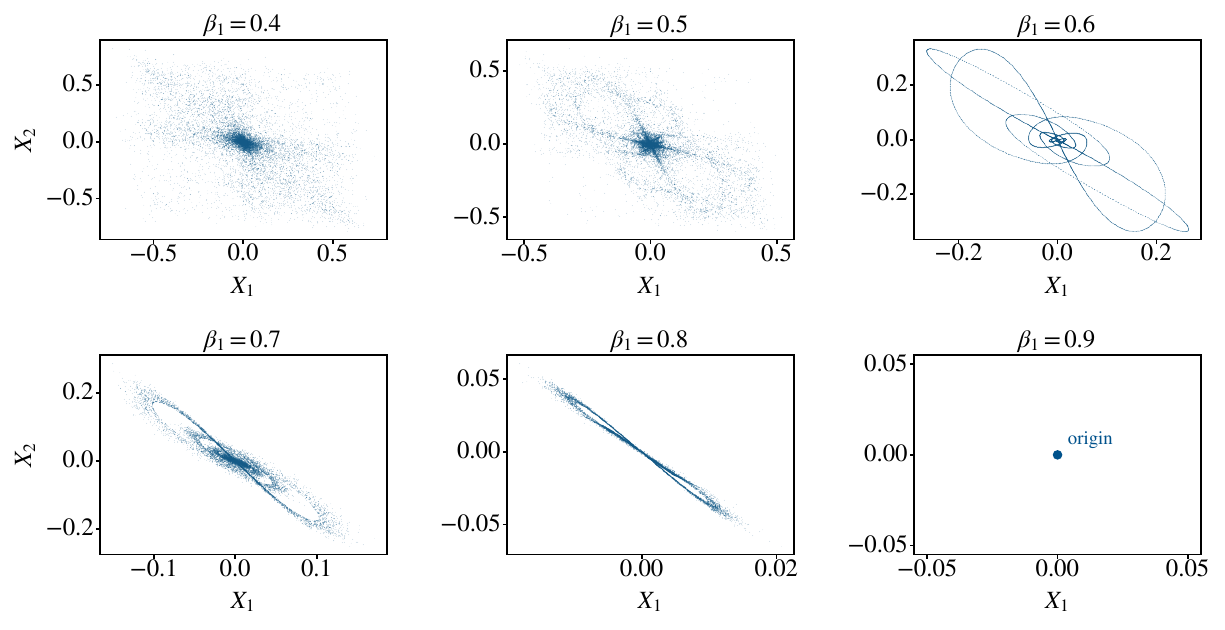}
    \caption{Spatial projections of autonomous-Adam trajectories at $h=1.92\times10^{-6}$ and $\beta_1\in\{0.4,0.5,0.6,0.7,0.8,0.9\}$. We observe chaotic motion at $\beta_1=0.8$ and convergence to the origin at $\beta_1=0.9$. The origin marker is enlarged for visibility.}
    \label{fig:adam_fixed_h1p92_beta1_high_gallery}
    \par\medskip
\includegraphics[width=0.90\linewidth]{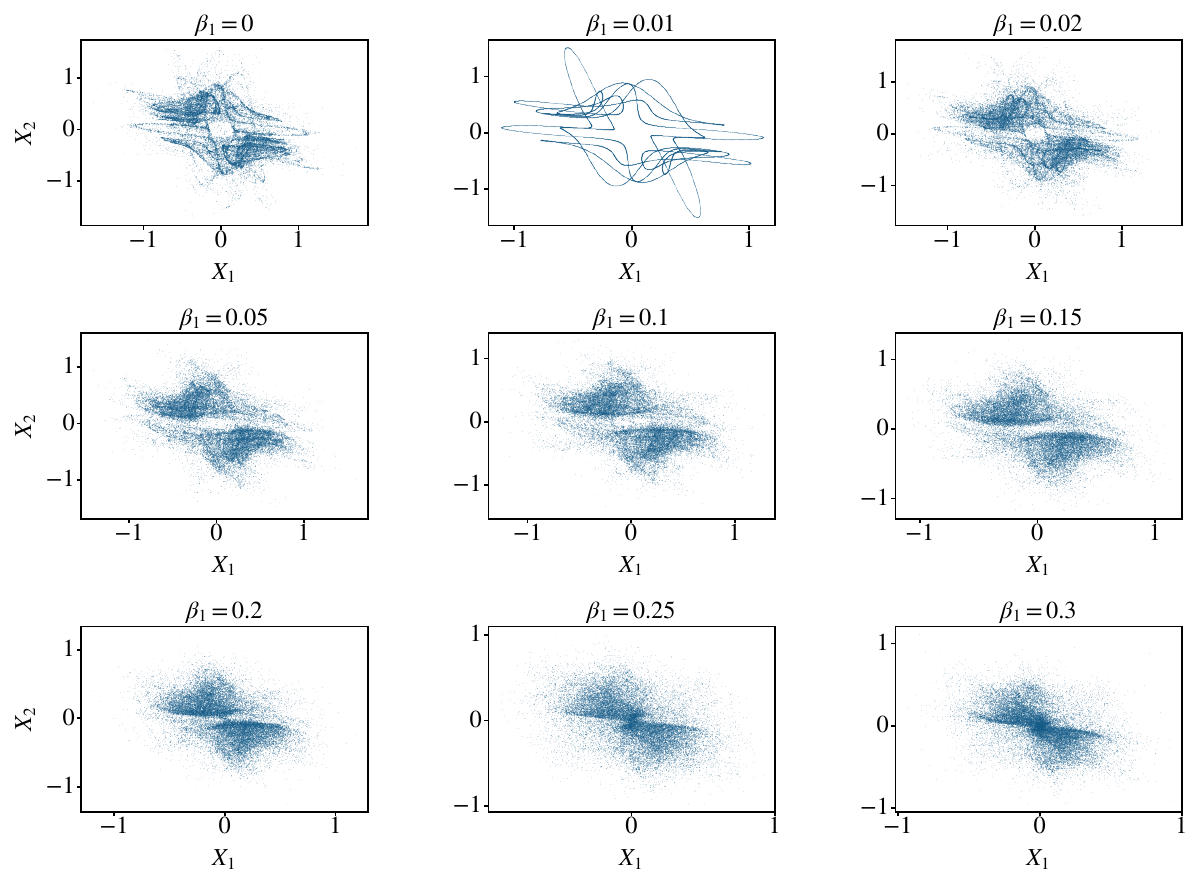}
    \caption{Spatial projections of autonomous-Adam trajectories at $h=3\times10^{-6}$ and $\beta_1\in\{0,0.01,0.02,0.05,0.10,0.15,0.20,0.25,0.30\}$.}
    \label{fig:adam_fixed_h3p00_beta1_node_gallery}
\end{figure}
\begin{figure}[!htbp]
    \centering
    \includegraphics[width=0.95\linewidth]{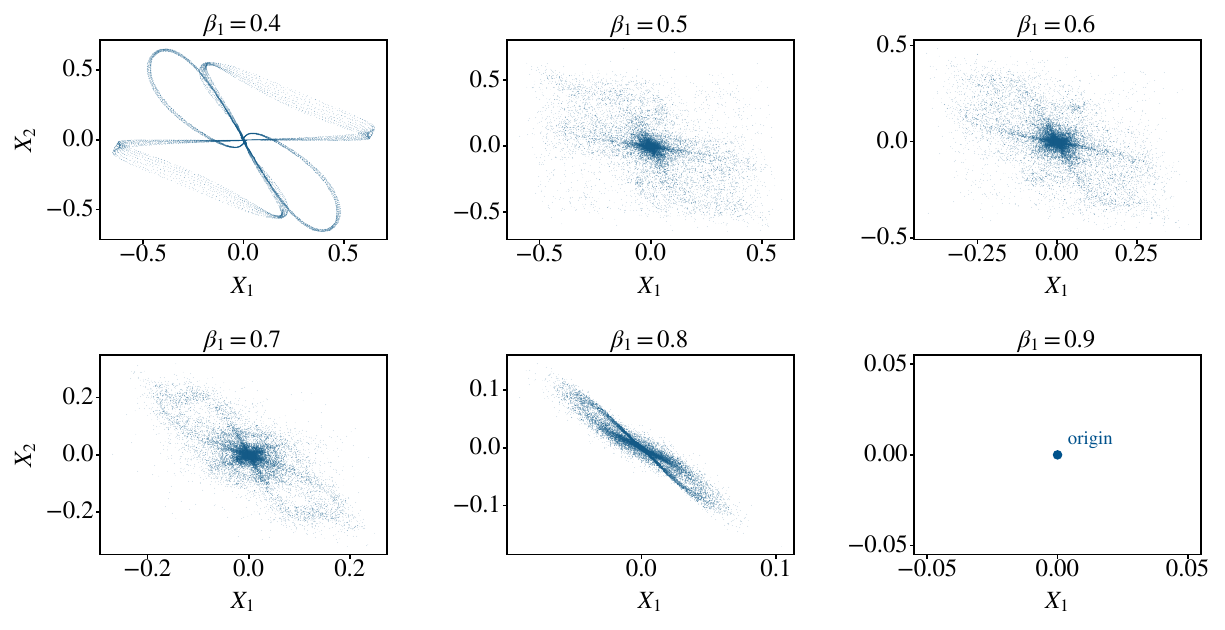}
    \caption{Spatial projections of autonomous-Adam trajectories at $h=3\times10^{-6}$ and $\beta_1\in\{0.4,0.5,0.6,0.7,0.8,0.9\}$. We observe convergence to the origin at $\beta_1=0.9$. The origin marker is enlarged for visibility.}
    \label{fig:adam_fixed_h3p00_beta1_high_gallery}
\par\medskip
\includegraphics[width=\linewidth]{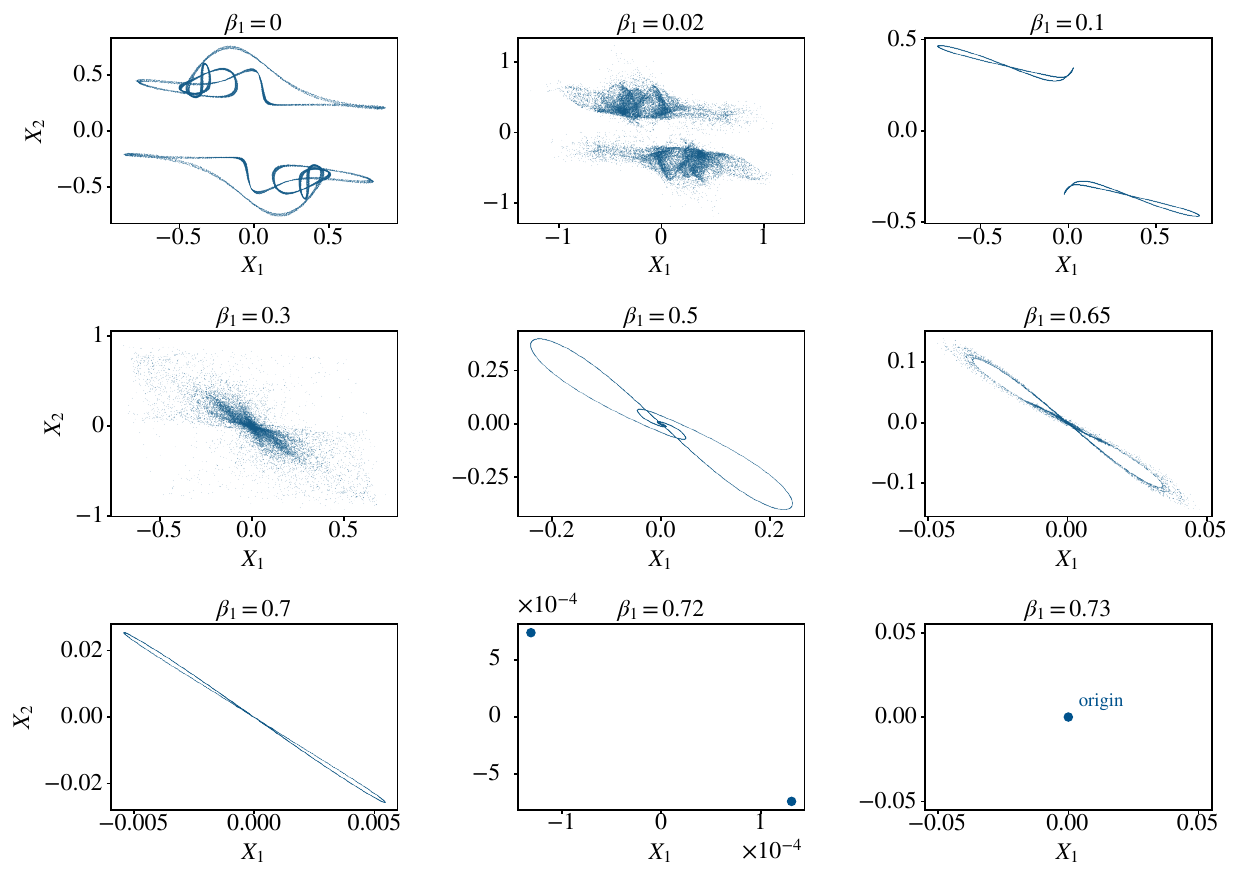}
\caption{Spatial projections of autonomous-Adam trajectories at $h=1.05\times10^{-6}$ and $\beta_1\in\{0,0.02,0.10,0.30,0.50,0.65,0.70,0.72,0.73\}$. At $\beta_1=0.73$, the trajectory converges to the origin. Markers for the period-two orbit at $\beta_1=0.72$ and the origin at $\beta_1=0.73$ are enlarged for visibility.}
\label{fig:supp-adam_fixed_h1p05_beta1_attractors}
\end{figure}
\begin{figure}[!htbp]
\centering
\includegraphics[width=\linewidth]{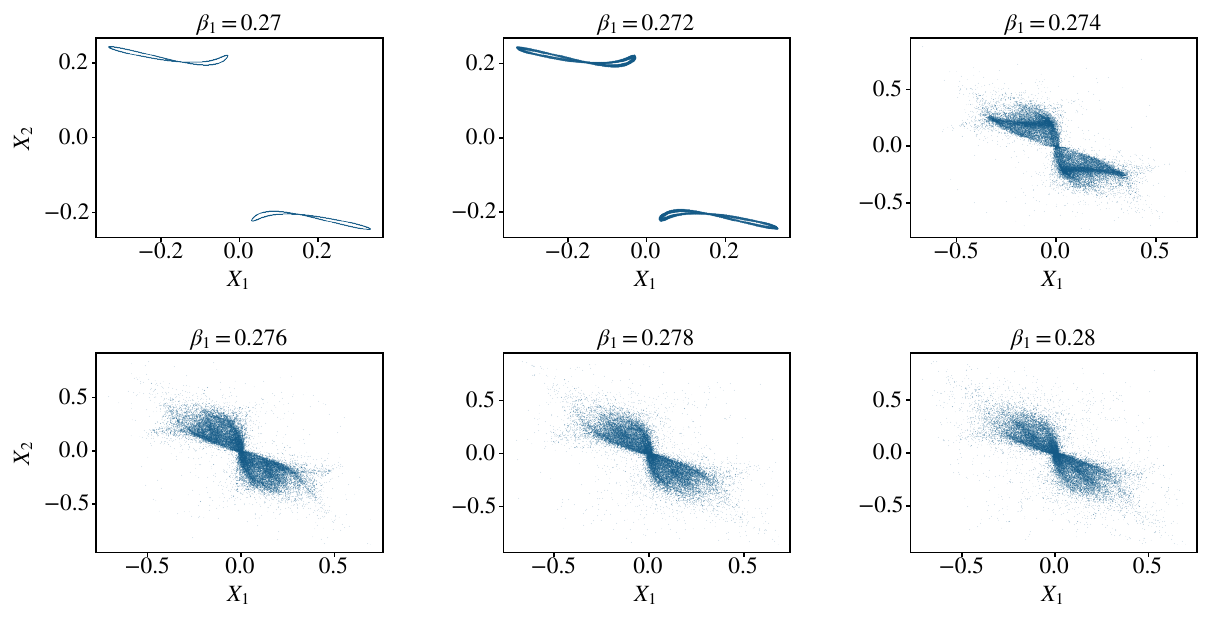}
\caption{Spatial projections of autonomous-Adam trajectories near $\beta_1=0.275$, at $h=1.05\times10^{-6}$ and $\beta_1\in\{0.270,0.272,0.274,0.276,0.278,0.280\}$.}
\label{fig:supp-adam_fixed_h1p05_beta1_near_0p3_attractors}
    \par\medskip
\includegraphics[width=\linewidth]{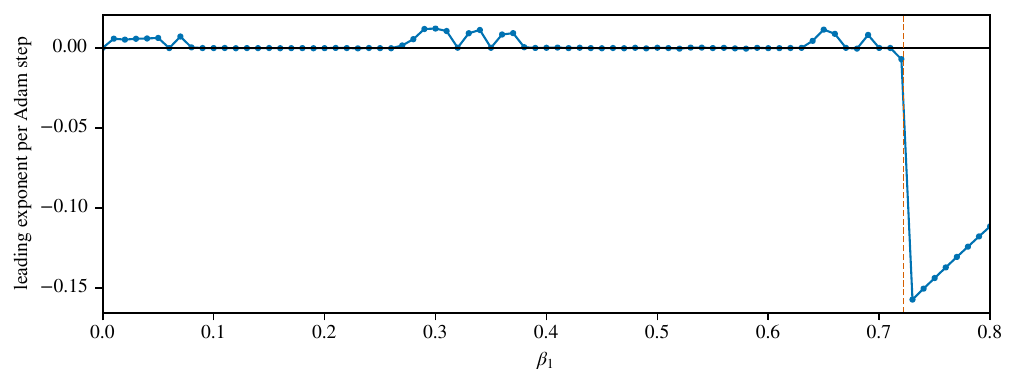}
\caption{Leading finite-time Lyapunov exponent versus $\beta_1$ at $h=1.05\times10^{-6}$, $\beta_2=0.98$, and $\epsilon=10^{-7}$. We sample $\beta_1=0,0.01,\ldots,0.80$, with 30,000 discarded and 20,000 measured iterates of $G_{\beta_1}$ per sample. We use the initial-state convention in Section~\ref{sec:numerical-parameters} and the first coordinate vector as the initial perturbation. The indicated stability boundary of the origin is $\beta_1\approx0.72203$.}
\label{fig:supp-adam_fixed_h1p05_beta1_ftle}
\end{figure}
\clearpage
\begin{figure}[H]
\centering
\includegraphics[width=\linewidth]{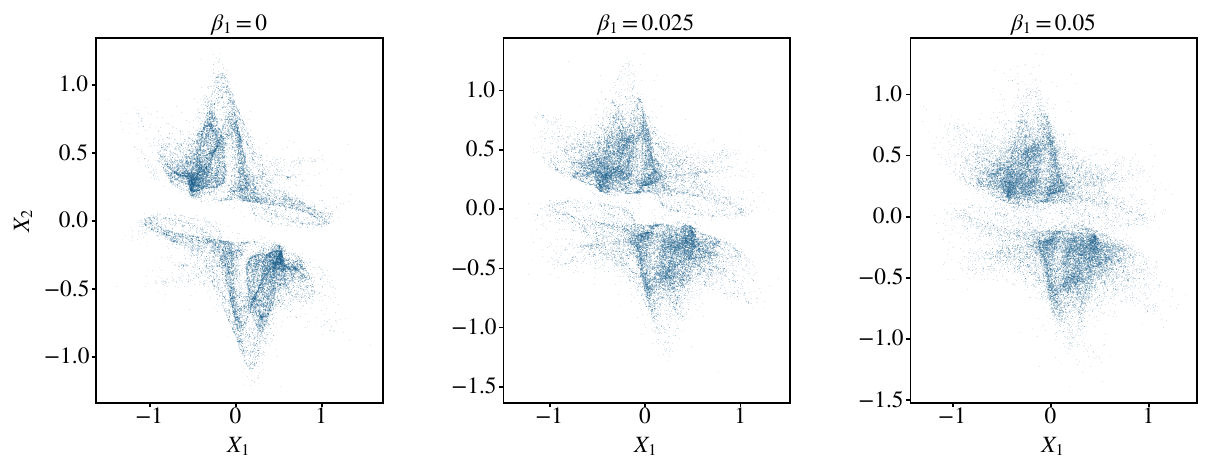}
\caption{Spatial projections at $h=1.92\times10^{-6}$ and $\beta_1=0,0.025,0.05$.}
\label{fig:supp-adam_fixed_h1p92_beta1_0_0p025_0p05_attractors}
\end{figure}

\begingroup
\let\originalthebibliography\thebibliography
\let\originalendthebibliography\endthebibliography
\renewenvironment{thebibliography}[1]{\originalthebibliography{#1}\setlength{\itemsep}{1pt plus 0.2pt}}{\originalendthebibliography}
\bibliographystyle{amsplain}
\bibliography{references}
\endgroup

\end{document}